\newtheorem{theorem}{Theorem}[section]
\newtheorem{lemma}[theorem]{Lemma}
\newtheorem{proposition}[theorem]{Proposition}
\newtheorem{corollary}[theorem]{Corollary}
  \newtheorem{remark}[theorem]{Remark}
  \newtheorem{definition}[theorem]{Definition}
  \newtheorem{example}[theorem]{Example}
\numberwithin{equation}{section}
\newcommand\qbin[3]{\left[\begin{matrix} #1 \\ #2 \end{matrix} \right]_{#3}}
\DeclareMathOperator{\con}{con}
\title[Naruse hook formula for mobile posets]{Naruse hook formula for linear\\ extensions of mobile posets}
\author{GaYee Park}
\address{Department of Mathematcs and Statistic, University of Massachusetts, Amherst, MA}
\email{park@math.umass.edu}
\date{\today}
\newcommand{\Cat}{\mathrm{Cat}}
\newcommand{\SYT}{\mathrm{SYT}}
\newcommand{\inv}{\mathrm{inv}}
\newcommand{\maj}{\mathrm{maj}}
\newcommand{\des}{\mathrm{Des}}
\newcommand{\E}{\mathcal{E}}
\newcommand{\stat}{\mathrm{stat}}
\newcommand{\Br}{\mathrm{Br}}
\begin{document}

\maketitle

\begin{abstract}
   Linear extensions of posets are important objects in enumerative and algebraic combinatorics that are difficult to count in general. Families of posets like Young diagrams of straight shapes and $d$-complete posets have hook-length product formulas to count linear extensions, whereas families like Young diagrams of skew shapes have determinant or positive sum formulas like the Naruse hook-length formula from 2014. In 2020, Garver et. al. gave determinant formulas to count linear extensions of a family of posets called mobile posets that refine $d$-complete posets and border strip skew shapes. We give a Naruse type hook-length formula to count linear extensions of such posets by proving a major index $q$-analogue. We also a inversion index $q$-analogue of the Naruse formula for mobile tree posets. 
 
\end{abstract}

\section{Introduction}

\subsection{Hook-length formulas for linear extensions}
Linear extensions of posets are fundamental objects in combinatorics. In general, computing the number $e(P)$ linear extensions of any poset is a difficult problem, it is $\#P$-complete \cite{Brightwell-Winkler}. For certain posets like Young diagrams, rooted trees, and more generally $d$-complete posets, there are product formulas that compute the number of linear extensions efficiently, such as the classical \emph{hook-length formula} (HLF) for the number of standard Young tableaux (SYT) of shape $\lambda$. 

\begin{theorem}[{Frame-Robinson-Thrall \cite{FRT}}]
Let $\lambda$ be a partition of $n$. We have
    \begin{equation} \tag{HLF} \label{eq:HLF}
       |\SYT(\lambda)| =  n!\prod_{u \in [\lambda]}\frac{1}{h(u)}
    \end{equation}
where $h(u) = \lambda_i + \lambda_j' -i - j +1$ is the hook-length of the square $u= (i,j)$.
\end{theorem}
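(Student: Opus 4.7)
The plan is to prove the hook-length formula by induction on $n = |\lambda|$, via the probabilistic \emph{hook walk} of Greene, Nijenhuis, and Wilf. Set $f^\lambda := |\SYT(\lambda)|$; the base case $n = 1$ is trivial. Since the entry $n$ of any standard Young tableau of shape $\lambda$ must occupy an inner corner $c$, we have the recursion $f^\lambda = \sum_c f^{\lambda \setminus c}$, summed over removable corners. Substituting the inductive hypothesis $f^{\lambda \setminus c} = (n-1)! / \prod_{u \in [\lambda \setminus c]} h_{\lambda \setminus c}(u)$, and using $h_\lambda(c) = 1$ to absorb the missing factor, the theorem reduces to the identity
\[
\sum_c \frac{1}{n} \prod_{u \in [\lambda \setminus c]} \frac{h_\lambda(u)}{h_{\lambda \setminus c}(u)} \;=\; 1.
\]

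To prove this identity, I would introduce the hook walk: pick a starting cell of $[\lambda]$ uniformly, then at each step move from the current cell $u$ to a uniformly random cell of $H(u)\setminus\{u\}$, where $H(u)$ is the hook at $u$. The walk terminates precisely at an inner corner. Since removing a corner $c=(a,b)$ decreases $h(u)$ by exactly $1$ at every cell strictly above $c$ in column $b$ or strictly to the left of $c$ in row $a$ and leaves all other hook lengths unchanged, the summand above rewrites as
\[
\frac{1}{n}\prod_{i<a}\!\Big(1 + \tfrac{1}{h_\lambda(i,b)-1}\Big)\prod_{j<b}\!\Big(1 + \tfrac{1}{h_\lambda(a,j)-1}\Big).
\]
The central claim is that this product equals $\Pr[\text{the hook walk terminates at }c]$. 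Since the walk terminates at some corner with probability $1$, summing this hit-probability formula over $c$ produces the desired identity and closes the induction.

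The main obstacle is establishing the hit-probability formula for the walk. I would proceed by induction on the number of cells strictly above and strictly to the left of $c$, conditioning on the first step of the walk: the hit probability decomposes into a ``column'' contribution (steps that change the row) and a ``row'' contribution (steps that change the column), and these decouple because each step of the walk stays in either the current row or the current column. Expanding each factor $1 + \tfrac{1}{h-1}$ as a sum over whether a given cell is ``visited'' by the walk yields an inclusion-exclusion whose terms are in bijection with trajectories reaching $c$; the telescoping identity $\tfrac{h}{h-1} = 1 + \tfrac{1}{h-1}$ applied cell by cell is what collapses these sums. Verifying this collapse is the only genuinely computational step — the rest is bookkeeping supported by the recursive structure of the hook walk.
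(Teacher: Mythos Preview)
Your proposal is correct: this is the standard Greene--Nijenhuis--Wilf probabilistic argument, and the sketch you give (reduce by induction to the identity $\sum_c \Pr[\text{walk ends at }c]=1$, then verify the product formula for the hit probability by conditioning on the first step and using the row/column decoupling) is sound. The only place you hand-wave is the final ``telescoping collapse,'' but that computation is routine once the setup is in place.

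However, there is nothing to compare against: the paper does not prove this theorem. It is quoted in the introduction as classical background, attributed to Frame--Robinson--Thrall, and never revisited. The paper's own contributions concern Naruse-type formulas for mobile posets (Theorem~\ref{thm: major mobile nhlf} and Theorem~\ref{thm: inv nhlf}), proved by evaluating the multivariate function $F_{\lambda/\mu}$ and combining a Pieri--Chevalley identity with a recurrence on $e_q^{\maj}$. That machinery does specialize, in principle, to straight shapes $\lambda$ (where there is a single excited diagram and the sum collapses to a product), so one could extract \eqref{eq:HLF} as a degenerate case of Corollary~\ref{thm: mobile nhlf}; but the paper does not spell this out, and your hook-walk argument is entirely independent of that route.
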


For skew shapes, there is no known product formula, however, Naruse introduced a generalization of the hook-length formula as a positive sums over excited diagrams of products of hook-lengths. We call this the \emph{Naruse hook-length formula} (NHLF).

\begin{theorem}[{Naruse \cite{naruse2014}}]
For a skew shape $\lambda/\mu$ of size $n$, we have
\begin{equation} \label{eq:NHLF}
    |\SYT(\lambda/\mu)| \,=\, n!\sum_{D \in \mathcal{E}(\lambda/\mu)} \prod_{u\in [\lambda] \setminus D}\frac{1}{h(u)}, \tag{NHLF}
\end{equation}
where $\E(\lambda/\mu)$ is the set of excited diagrams of $\lambda/\mu$.
\end{theorem}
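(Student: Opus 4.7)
The plan is to convert the determinantal identity
\[
|\SYT(\lambda/\mu)| \;=\; n!\,\det\!\left[\tfrac{1}{(\lambda_i-\mu_j-i+j)!}\right]_{i,j=1}^{\ell(\lambda)},
\]
which follows from the Jacobi-Trudi formula for $s_{\lambda/\mu}$ together with the exponential specialization $\mathrm{ex}(h_k)=1/k!$, into a manifestly positive sum indexed by excited diagrams, matching the right-hand side of (NHLF) term by term.

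The first step is to interpret each matrix entry $1/(\lambda_i-\mu_j-i+j)!$ as the weighted count of monotone lattice paths in $\mathbb{Z}^2$ between a prescribed source and sink, and then apply the Lindström-Gessel-Viennot lemma to the $\ell\times\ell$ determinant. Since the signed contributions of crossing families cancel, the determinant collapses to a positive sum over non-intersecting $\ell$-tuples of paths joining the sources indexed by $(\mu_j-j)$ to the sinks indexed by $(\lambda_i-i)$.

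The second step is to invoke the bijection of Kreiman (given independently by Lascoux and by Ikeda-Naruse in the context of factorial/double Schubert polynomials) between such non-intersecting lattice path families and excited diagrams $D\in\E(\lambda/\mu)$. Under this bijection, the corner data of the paths records the image of $\mu$ under the compound excitation sequence encoded by $D$, while the cells of $[\lambda]\setminus D$ correspond to the remaining lattice cells ``outside'' the excited copy of $\mu$.

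The main obstacle, and the step demanding the most care, will be to verify that the product of edge weights along a non-intersecting path family equals $\prod_{u\in[\lambda]\setminus D}1/h(u)$ cell by cell. Concretely, each linear factor $\lambda_i-\mu_j-i+j$ appearing along a path segment must be identified with the hook length $h(u)$ of a specific cell $u\in[\lambda]\setminus D$, via the standard translation between horizontal/vertical path steps and the arm/leg decomposition of $h(u)$. Once this local identification is established and summed over $D\in\E(\lambda/\mu)$, the right-hand side of (NHLF) emerges, completing the proof.
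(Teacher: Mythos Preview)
The paper does not prove this theorem; it is quoted from \cite{naruse2014} as background. The paper's own arguments (Sections~\ref{sec: p-partition}--\ref{sec: q-analogue major}, following \cite{MPP2}) establish a $q$-analogue only for \emph{mobile posets}, whose underlying skew shape is a border strip; for general skew shapes the paper simply cites Naruse and \cite{MPP1}. So there is no ``paper's own proof'' of the stated theorem to compare against, only the method used for the border-strip special case: the multivariate Pieri--Chevalley identity \eqref{eq: Chevalley} for $F_{\lambda/\mu}(\mathbf{x},\mathbf{y})$ combined with a recurrence for $e_q(P_{\lambda/\mu})$ on removal of inner corners, followed by the specialization $x_i=q^{\lambda_i-i+1}$, $y_j=q^{j-\lambda'_j}$.

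Your route via the Aitken/Jacobi--Trudi determinant and Lindstr\"om--Gessel--Viennot is a genuinely different line, in the spirit of Kreiman and of \cite{MPP1}. But there is a real gap precisely at the step you flag as the ``main obstacle.'' The entries $1/(\lambda_i-\mu_j-i+j)!$ are reciprocals of products of the \emph{consecutive integers} $1,2,\ldots,\lambda_i-\mu_j-i+j$; these are not hook lengths, and there is no cell-by-cell matching of ``each linear factor $\lambda_i-\mu_j-i+j$ appearing along a path segment'' with a single $h(u)$. The known LGV-style proofs do not start from the exponential specialization of Jacobi--Trudi. They either (i) pass through the factorial Schur function $s_\mu(\mathbf{x}\,|\,\mathbf{y})$ evaluated at $x_i=\lambda_i-i+1$, $y_j=j-\lambda'_j$, whose monomials are \emph{by construction} products of $x_i-y_j=h(i,j)$, and then separately identify that evaluation with $|\SYT(\lambda/\mu)|\cdot\prod_{u\in[\lambda]}h(u)/n!$; or (ii) set up weighted lattice paths inside $[\lambda]$ whose step weights are $1/h(u)$ from the outset, so that the LGV determinant already equals the Naruse sum, and then match that determinant to the Aitken determinant by a different argument. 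Your outline conflates these and skips exactly the bridging identity; as written, the final ``local identification'' does not exist.
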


The number of SYT of shape $\lambda/\mu$ can also be interpreted as the number of linear extension of a poset induced by the Young diagram of $\lambda/\mu$. In \cite{Pro}, Proctor  defined the family of \emph{d-complete} posets, that include Young diagrams of shape $\lambda$ and rooted trees and have a hook-length formula to count the number of linear extensions.

\begin{theorem}[{Peterson-Proctor \cite{Pro}}]\label{thm: proctor}
The number of linear extensions of a $d$-complete poset $P$ with $n$ element is
\[e(P) = \frac{n!}{\prod_{x\in P}h_{P}(x)},\]
where $h_{P}(x)$ is the hook-length of $x\in P$.
\end{theorem}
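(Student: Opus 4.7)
The plan is to prove Theorem~\ref{thm: proctor} by induction on $|P|$, combining Proctor's structural decomposition of $d$-complete posets with a careful analysis of how hook-lengths behave under the removal of a maximal element. The underlying philosophy is to reduce the general case to two manageable tasks: verifying the formula on irreducible pieces, and showing it is preserved by the gluing operation that builds up every $d$-complete poset.

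First, I would recall Proctor's structure theorem, which asserts that every connected $d$-complete poset is either \emph{slant-irreducible}, belonging to one of 15 classified families indexed by Dynkin-like diagrams (shapes, shifted shapes, minuscule posets, rooted trees, and a handful of exceptional families), or else decomposes as a \emph{slant-sum} $P = P_1 \oplus_x P_2$ of two smaller $d$-complete posets joined at a top/bottom element $x$. This reduces the proof to two steps: (a) establishing the formula for each irreducible family, and (b) showing that the product $\prod_{x \in P} h_P(x)$ transforms compatibly under slant-sum so that the inductive hypothesis applies.

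For step (a), the straight shapes and shifted shapes are covered by the classical hook-length formulas of Frame--Robinson--Thrall and Thrall; minuscule posets admit a proof via branching rules on minuscule representations; and the remaining exceptional families are of bounded size and can be checked by direct enumeration. For step (b), one fixes the gluing element $x$, writes every linear extension of $P$ as an interleaving of linear extensions of $P_1$ and $P_2$ with the relative position of $x$ recorded by a binomial coefficient, and then compares $h_P(y)$ to $h_{P_i}(y)$ for $y$ in each component. The $d$-completeness axioms (the double-tailed diamond condition) force the hook at $y$ to either coincide with the hook in its component or to pick up a uniform contribution from the chain through $x$, and the resulting binomial-vs-hook bookkeeping produces exactly the cancellation required for the inductive step.

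The main obstacle is this second step: in contrast with Young diagrams, where the hook of a cell is a geometric arm--leg union, the hook of an element in a $d$-complete poset is defined recursively through the $d$-complete coloring, and the hook values of elements \emph{far} from the joining point $x$ can shift when a slant-sum is performed. Establishing the precise local-to-global hook identity under slant-sum is the technical heart of the argument. An alternative, more conceptual route, essentially Peterson's unpublished approach, identifies a $d$-complete poset with the heap of a dominant minuscule element in a Kac--Moody Weyl group and extracts the formula from the Weyl--Kac character formula applied to an appropriate highest weight module; this bypasses the case analysis but requires substantially more representation-theoretic machinery than the inductive route.
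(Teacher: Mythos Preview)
The paper does not prove this theorem; it is quoted as a background result due to Peterson and Proctor \cite{Pro}, and only its $q$-analogue (Theorem~\ref{thm: maj d-complete poset}) is later invoked as a black box inside the proof of Theorem~\ref{thm: major mobile nhlf}. There is therefore no ``paper's own proof'' to compare your proposal against.

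For what it is worth, your outline is broadly consistent with how the result is established in the literature: Proctor's classification into fifteen slant-irreducible families together with closure under slant-sum, and Peterson's alternative route via $\lambda$-minuscule heaps and Kac--Moody representation theory, are indeed the two standard strategies. Your step~(b) is where the real work lies and your description of the hook-length bookkeeping under slant-sum is necessarily schematic (in particular, the claim that hooks ``pick up a uniform contribution from the chain through $x$'' would need to be made precise via the $d$-complete coloring), but since the paper makes no attempt at this theorem there is nothing further to compare.
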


\subsection{$q$-analogue of hook-length formulas}
There are the following $q$-analogues of both the HLF and the NHLF for semistandard Young tableaux. We state these results in terms of $e_q^{\maj}(P,\omega):=\sum_{\sigma} q^{\maj(\sigma)}$ where $(P,\omega)$ is a labeled poset, and $\sigma$ is a linear extension of it. This polynomial also encodes the generating functions of $P$-partitions \cite{Sta1}.

\begin{theorem}[Stanley \cite{Sta3}]
For a shape $\lambda$ with associated poset $Q_\lambda$ of size $n$ with $\omega$ Schur labeling, we have:
\begin{equation}
   \frac{e_q^{\maj}(Q_{\lambda},\omega)}{\prod^n_{i=1}(1-q^i)}  =  q^{b(\lambda)} \prod_{u \in [\lambda]} \frac{1}{1-q^{h(u)}},
\end{equation}
where $b(\lambda) = \sum_{i}(i-1)\lambda_i$.
\end{theorem}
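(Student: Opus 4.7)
The plan is to combine Stanley's theorem on $(P,\omega)$-partitions with the principal specialization of a Schur function. By Stanley's fundamental theorem on $P$-partitions, for any labeled poset $(P,\omega)$ on $n$ elements,
\begin{equation*}
\sum_{\pi} q^{|\pi|} \;=\; \frac{e_q^{\maj}(P,\omega)}{\prod_{i=1}^{n}(1-q^i)},
\end{equation*}
where the sum is over all $(P,\omega)$-partitions $\pi$ graded by the sum of the parts. Applied to $P=Q_\lambda$ with the Schur labeling, this rewrites the left-hand side of the theorem as a generating function over $(Q_\lambda,\omega)$-partitions.

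The next step is to identify that family with a classical object. Under the Schur labeling, the weak and strict cover relations of $Q_\lambda$ precisely encode the row-weak and column-strict conditions defining semistandard Young tableaux of shape $\lambda$ with nonnegative integer entries. Each such tableau $T$ contributes $q^{\sum_u T(u)}$, so the generating function is the principal specialization $s_\lambda(1,q,q^2,\ldots)$. From here I would invoke the classical identity
\begin{equation*}
s_\lambda(1,q,q^2,\ldots) \;=\; q^{b(\lambda)}\prod_{u \in [\lambda]} \frac{1}{1-q^{h(u)}},
\end{equation*}
which follows from Stanley's hook-content formula (or, if one prefers, from the Jacobi--Trudi determinant combined with a Vandermonde simplification). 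The offset $q^{b(\lambda)}$ captures the minimum weight of an SSYT, achieved by filling row $i$ with all $(i-1)$'s. Chaining the three identities yields the stated equality.

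The only real obstacle is bookkeeping with conventions: one must verify that the chosen Schur labeling $\omega$ aligns with the direction of monotonicity for SSYT so that $\maj$ on linear extensions matches the statistic appearing in Stanley's $P$-partition theorem, and that the minimum-weight contribution produces $q^{b(\lambda)}$ rather than some other offset. Once those conventions are pinned down, the theorem reduces to assembling standard machinery, and in particular no new hook-length argument is required beyond what already appears in Stanley's principal specialization identity.
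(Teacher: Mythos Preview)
The paper does not give its own proof of this theorem: it is quoted as a classical result of Stanley (Theorem~1.4, attributed to \cite{Sta3}) and used only as background. So there is no in-paper argument to compare against.

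That said, your sketch is the standard route and is essentially correct. The ingredients you cite are exactly the ones the paper itself records as background: Theorem~\ref{thm: G(P)} (Stanley's $P$-partition generating function identity) gives your first display, and the bijection between $(Q_\lambda,\omega)$-partitions for the Schur labeling and column-strict fillings, followed by the hook-content/principal-specialization identity for $s_\lambda(1,q,q^2,\ldots)$, gives the rest. Your caveat about conventions is warranted: in this paper the poset $Q_\lambda$ is oriented so that inner corners are maximal (Figure~\ref{fig: poset to mobile}) and the paper works with \emph{reversed} Schur labelings on skew shapes, so when carrying out the details you should check that the descent set of $\omega$ matches the strict direction (columns) of SSYT and hence that the $(P,\omega)$-partitions really are SSYT rather than reverse plane partitions; this is what pins down the power $q^{b(\lambda)}$. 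Once that is verified, nothing further is needed.
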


\begin{theorem}[{Morales-Pak-Panova \cite{MPP1}}]\label{thm: MPP1 maj}
For a skew shape $\lambda/\mu$ with associated poset $Q_{\lambda/\mu}$ of size $n$ with $\omega$ Schur labeling, we have:
\begin{equation}
   \frac{e_q^{\maj}(Q_{\lambda/\mu},\omega)}{\prod^n_{i=1}(1-q^i)}  = \sum_{D\in \mathcal{E}(\lambda/\mu)} q^{w(D)} \prod_{u \in [\lambda]\setminus D} \frac{1}{1-q^{h(u)}},
\end{equation}
where $w(D) = \sum_{u\in Br(D)} h(u)$ is the sum of hook-lengths of the support of broken diagonals.
\end{theorem}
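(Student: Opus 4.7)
The plan is to show that both sides compute the generating function for reverse plane partitions (RPPs) of shape $\lambda/\mu$ weighted by $q^{|\pi|}$, and to match the RHS term-by-term via an explicit bijection. By the fundamental theorem of $(P,\omega)$-partitions, the LHS satisfies $e_q^{\maj}(Q_{\lambda/\mu},\omega)/\prod_{i=1}^n(1-q^i) = \sum_\pi q^{|\pi|}$, where $\pi$ ranges over $(Q_{\lambda/\mu},\omega)$-partitions; under the Schur labeling, these are exactly RPPs of skew shape $\lambda/\mu$, with $|\pi|$ being the sum of entries. On the RHS, each excited diagram $D$ contributes $q^{w(D)}\prod_{u\in[\lambda]\setminus D}1/(1-q^{h(u)})$, which is the generating function for pairs $(D,T)$ where $T:[\lambda]\setminus D\to\mathbb{Z}_{\geq 0}$, weighted by $w(D)+\sum_u T(u)h(u)$. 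The theorem therefore reduces to constructing a weight-preserving bijection
\[
\Phi \colon \{\text{RPPs of shape }\lambda/\mu\} \;\longrightarrow\; \bigsqcup_{D\in\mathcal{E}(\lambda/\mu)} \mathbb{Z}_{\geq 0}^{[\lambda]\setminus D}.
\]

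I would build $\Phi$ as a skew generalization of the classical Hillman-Grassl bijection, which handles the straight case $\mu=\emptyset$: there $\mathcal{E}(\lambda)=\{\emptyset\}$, $w(\emptyset)=0$, and $\Phi$ produces a hook tableau of shape $\lambda$. The algorithm iteratively strips hooks from a nonzero RPP, starting (say) from the lowest nonzero entry and tracing a hook-path whose steps record contributions to $T$. In the skew setting, whenever a hook-path would be blocked by the inner shape $\mu$, it must detour around the obstructing cells. Across all path removals, these detours collectively specify a system of non-intersecting lattice paths from the inner corners of $\mu$ to the boundary of $\lambda$, which is precisely the data of an excited diagram $D\in\mathcal{E}(\lambda/\mu)$.

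The main obstacle will be to prove that the output is a valid excited diagram and, crucially, that the diversions contribute exactly $w(D)=\sum_{u\in\mathrm{Br}(D)} h(u)$ to the total weight, so that $|\pi| = w(D) + \sum_u T(u) h(u)$. This requires identifying the broken-diagonal cells of $D$ with the positions at which hook-paths are deflected by $\mu$, and maintaining a delicate invariant through each hook-removal step to show the weight contributions aggregate correctly. An alternative algebraic route would bypass the bijection entirely: expand $s_{\lambda/\mu}$ via the Ikeda-Naruse identity for factorial Schur functions as a positive sum over excited diagrams, then apply the principal specialization $x_i \mapsto q^i$ and match both sides of the resulting identity directly.
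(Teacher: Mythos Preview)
The statement is quoted in this paper as a result of Morales--Pak--Panova and is not proved here, so there is no in-paper proof to compare against directly. The paper does record (Section~7.1) that the original proof in \cite{MPP1} proceeds via factorial Schur functions---which is precisely your ``alternative algebraic route'' through the Ikeda--Naruse identity and principal specialization. So that alternative is in fact the published argument, not a detour.

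Your primary bijective proposal, however, has a genuine gap. Under the Schur labeling the $(Q_{\lambda/\mu},\omega)$-partitions are \emph{semistandard Young tableaux} (strict down columns, weak along rows), not reverse plane partitions; RPPs correspond to the \emph{natural} labeling. The Hillman--Grassl correspondence you invoke is a bijection on RPPs, and in \cite{MPP1} it is deployed to prove the \emph{other} $q$-analogue of NHLF---the one weighted by excited peaks rather than by broken diagonals (compare Section~\ref{subsec: skew d-complete} here and \cite[Cor.~6.17]{MPP1}). For the SSYT/broken-diagonal version stated in Theorem~\ref{thm: MPP1 maj}, a straight Hillman--Grassl argument does not go through: the strict column inequalities obstruct the uniform ``subtract one from every entry'' move underlying hook-stripping, so neither the product $\prod_{u}1/(1-q^{h(u)})$ nor the weight $w(D)=\sum_{u\in\Br(D)}h(u)$ falls out of your detour picture in the way you describe. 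You would need a genuinely different bijection tailored to SSYT, which you have not outlined, or else revert to the algebraic route.

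For completeness: the present paper contributes yet a third approach in the special case of border strips, combining the recurrence for $e_q^{\maj}$ (Lemma~\ref{lemma: major mobile recurrence}, specialized as Corollary~\ref{cor: border strip recursion}) with the Pieri--Chevalley formula~\eqref{eq: Chevalley}, thereby bypassing factorial Schur functions altogether.
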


For $d$-complete posets, we also have the following $q$-analogue in terms of major index (see Section~\ref{subsec: qanalogue of linear extensions}).

\begin{theorem}[{Peterson and Proctor \cite{Pro}}]\label{thm: d-complete major index}
For a labeled $d$-complete poset $(P, \omega)$ of size $n$ with $\omega$ any labeling, we have:
\[e_q^{\maj}(P,\omega) = q^{\maj(P,\omega)} \frac{[n]_q!}{\prod_{x\in P} [h_{P}(x)]_q},\]
where $h_{P}(x)$ is the hook-length of $x\in P$ and $\maj(P, \omega) = \sum_{x\in \des(P,\omega)}h_{P}(x)$.
\end{theorem}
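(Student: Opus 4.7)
The plan is to prove the formula by induction on $n = |P|$, using Proctor's structural description of connected $d$-complete posets to reduce to a handful of base cases, with the theory of $(P,\omega)$-partitions providing the bridge between the major-index generating function and the hook-length product.

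The first step is to reduce to a natural labeling $\omega_0$, that is, one satisfying $\omega_0(x) < \omega_0(y)$ for every cover $x \lessdot y$. By Stanley's fundamental theorem for $(P,\omega)$-partitions, the ratio $e_q^{\maj}(P,\omega)/\prod_{i=1}^n(1-q^i)$ equals the generating function for $(P,\omega)$-partitions. Passing from $\omega_0$ to an arbitrary $\omega$ multiplies this generating function by a factor which, after some bookkeeping, is exactly $q^{\maj(P,\omega)} = q^{\sum_{x\in\des(P,\omega)} h_P(x)}$, matching the shift that appears on the right-hand side of the claim. Hence it suffices to establish
\[
e_q^{\maj}(P,\omega_0) \;=\; \frac{[n]_q!}{\prod_{x\in P}[h_P(x)]_q},
\]
i.e., a pure hook-length product formula for the $P$-partition generating function on $d$-complete posets.

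The second step is an induction on $|P|$ using Proctor's slant-sum decomposition: every connected $d$-complete poset $P$ either admits an ``acyclic top'' whose removal yields a smaller $d$-complete poset, or decomposes as a slant sum $P_1 \oplus^s P_2$ along a single shared element $x$, with base cases given by the fifteen irreducible families (shapes, shifted shapes, rooted trees, birds, banners, insets, swivels, bats, and so on). For Young shapes the required identity is Stanley's formula already cited in the excerpt; for shifted shapes it is a classical analogue due to Gansner; the remaining exceptional families can be verified directly from their explicit descriptions, since each is rigid and small.

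The main obstacle is the slant-sum step. When $P = P_1 \oplus^s P_2$, the hook lengths away from the shared element $x$ are inherited from $P_1$ and $P_2$, but $h_P(x)$ itself mixes contributions from both sides. Propagating the hook-product identity through this decomposition calls for a bijective correspondence on $P$-partitions, in the spirit of the Hillman--Grassl correspondence, that factors a $P$-partition into compatible $P_1$- and $P_2$-parts while absorbing the extra factor $(1-q^{h_P(x)})^{-1}$. Verifying that this correspondence is well defined and weight preserving --- and, when combined with the reduction from $\omega_0$ to $\omega$, that the descents at $x$ are correctly bookkept --- is the delicate piece of the argument.
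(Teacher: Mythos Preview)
The paper does not prove this theorem at all: it is quoted in the introduction as a result of Peterson--Proctor \cite{Pro}, restated verbatim as Theorem~\ref{thm: maj d-complete poset}, and then \emph{used} as an input in the proof of Theorem~\ref{thm: major mobile nhlf}. There is therefore no proof in the paper to compare your proposal against.

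As for the proposal on its own terms, your first reduction step hides a real difficulty. You claim that passing from the natural labeling $\omega_0$ to an arbitrary $\omega$ multiplies the $(P,\omega)$-partition generating function by exactly $q^{\maj(P,\omega)}$, calling this ``some bookkeeping.'' But for a general labeled poset this is false: flipping a single cover from ascent to descent does not simply shift $e_q^{\maj}$ by a power of $q$. That it \emph{does} so for $d$-complete posets, with the shift equal to the relevant hook length, is essentially equivalent to the theorem you are trying to prove; the known arguments establish it either via jeu-de-taquin promotion (which is itself a deep property of $d$-complete posets) or as a consequence of the hook formula, not as a preliminary lemma. Your second step is structurally on the right track --- the case analysis over Proctor's irreducible classes together with a slant-sum induction is indeed the skeleton of the combinatorial proofs in the literature (e.g.\ Ishikawa--Tagawa, Kim--Yoo) --- but you correctly identify the Hillman--Grassl-type bijection at the slant-sum step as ``the delicate piece,'' and then do not supply it. What remains is an outline of where a proof would go, not a proof.
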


\subsection{Hook formulas for mobile posets}
A \emph{border strips} is a connected skew-shaped diagram with no $2\times 2$ box.
A \emph{mobile poset} is a recent common refinement of border strips and $d$-complete posets introduced in \cite{GMM} (see Figure~\ref{fig: poset to mobile}, (a)). The authors found a determinantal formula for the number of linear extensions of these posets, similar to \emph{Jacobi--Trudi} formula and asked whether there was a Naruse-type formula \cite[Sec. 6.1]{GMM} for this number. The first main result of this paper is a $q$-analogue Naruse hook-length formula for mobile posets for the major index, generalizing Theorem~\ref{thm: MPP1 maj}.

\begin{theorem}\label{thm: major mobile nhlf}
Let $P_{\lambda/\mu}(\mathbf{p})$ be a free-standing mobile poset of size $n$ with with underlying border strip $\lambda/\mu$ and $\mathbf{p} = (p_{(r_1,s_1)},\dots, p_{(r_k,s_k)})$ the $d$-complete posets hanging on $(r,s)$. For a labeled mobile poset $(P_{\lambda/\mu}(\mathbf{p}),\omega)$ with $\omega$ reversed Schur labeling on $[\lambda/\mu]$ and natural labeling on $d$-complete posets, we have:
    \[\frac{e_q^{\maj}(P_{\lambda/\mu},\omega)}{\prod^n_{i=1}(1-q^i)} = \prod_{v\in \mathbf{p}}\frac{1}{1-q^{h(v)}}\sum_{D\in \mathcal{E}(\lambda/\mu)} q^{w'(D)} \prod_{u\in [\lambda]\setminus D} \frac{1}{1-q^{h'(u)}},\] 
     where $h(v)$ is the hook-length of the element $v$ in the $d$-complete posets in $\mathbf{p}$, $h'(i,j) =  \lambda_i - i + \lambda'_j-j + 1 + \sum_{r\geq i,s\geq j}|p_{({r,s})}|$ and $w'(D) = \sum_{u\in \Br(D)} h'(u)$ is the sum of hook-lengths of the supports of broken diagonals.
\end{theorem}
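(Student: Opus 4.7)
The plan is to decompose the generating function of $P$-partitions of $P_{\lambda/\mu}(\mathbf{p})$ into a product of contributions from the hanging $d$-complete posets and a cell-weighted contribution from the border strip, apply Theorem~\ref{thm: d-complete major index} to the former, and then prove a weighted refinement of Theorem~\ref{thm: MPP1 maj} to handle the latter. Given a $P$-partition $\sigma$ of $P_{\lambda/\mu}(\mathbf{p})$, let $\tau := \sigma|_{[\lambda/\mu]}$ and, for each border cell $v$, let $\rho_v := \sigma|_{p_v}$. Because each $p_v$ sits above $v$ (with the labeling chosen so that this comparability is weak), the shift $\rho'_v := \rho_v - \tau(v)$ is an ordinary $p_v$-partition with minimum~$0$. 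Splitting the weight gives $|\sigma|=\sum_v \tau(v)(1+|p_v|) + \sum_v|\rho'_v|$, hence
\[\sum_\sigma q^{|\sigma|} \;=\; \Bigl(\prod_v \sum_{\rho'_v} q^{|\rho'_v|}\Bigr)\,\sum_\tau q^{\sum_v \tau(v)(1+|p_v|)}.\]

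Since each $p_v$ inherits the natural labeling, its major index vanishes, so Theorem~\ref{thm: d-complete major index} gives $\sum_{\rho'_v} q^{|\rho'_v|} = \prod_{u\in p_v}(1-q^{h(u)})^{-1}$, and taking the product over all $v$ yields the claimed prefactor $\prod_{v\in\mathbf{p}}(1-q^{h(v)})^{-1}$. Identifying $\sum_\sigma q^{|\sigma|}$ with $e_q^{\maj}(P_{\lambda/\mu}(\mathbf{p}),\omega)/\prod_{i=1}^n(1-q^i)$ by the standard $P$-partition transfer thus reduces the theorem to the cell-weighted Naruse identity
\begin{equation*}
\sum_\tau q^{\sum_v \tau(v)(1+|p_v|)} \;=\; \sum_{D\in\E(\lambda/\mu)} q^{w'(D)}\prod_{u\in[\lambda]\setminus D}\frac{1}{1-q^{h'(u)}}. \tag{$\star$}
\end{equation*}

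Establishing $(\star)$ is the main technical step. When every $|p_v|=0$, $(\star)$ collapses to Theorem~\ref{thm: MPP1 maj} (up to the routine recalibration between Schur and reversed Schur labelings), so $(\star)$ is a genuine cell-weighted refinement of the Morales--Pak--Panova $q$-NHLF. My plan is to adapt their combinatorial proof by replacing the uniform cell weight $q$ with the cell-dependent weight $q^{1+|p_v|}$ at each $v\in[\lambda/\mu]$ (and $q$ at each cell of $[\mu]$) and running their excited-diagram/non-intersecting-path model with the modified weights. The crucial bookkeeping identity is
\[h'(u)-h(u) \;=\; \sum_{v\in[\lambda/\mu],\, v\geq u}|p_v|,\]
which says each $d$-complete poset $p_v$ contributes $|p_v|$ additively to $h'(u)$ for every $u\leq v$; an identical accounting transports $q^{w(D)}$ to $q^{w'(D)}$ on the broken-diagonal side. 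The principal difficulty is verifying that this cell-weighting is compatible with the excited-move/path bijection so that neither terms are created nor lost, which is a finite but delicate combinatorial check. Once $(\star)$ is verified, combining it with the decomposition above completes the proof.
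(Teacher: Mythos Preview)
Your decomposition of the $P$-partition generating function is a valid and attractive idea, and (modulo the interface strictness, which depends on the precise global labeling) it does reduce the theorem to the cell-weighted identity $(\star)$. One minor slip: each $p_v$ sits \emph{below} $v$ in the poset, since $v$ covers the maximal element of $p_v$; the shift $\rho'_v = \rho_v - \tau(v) \geq 0$ is nonetheless correct because $(P,\omega)$-partitions are order-reversing. This reorganization is genuinely different from the paper's structure: the paper never factors off the hanging $d$-complete posets, but instead proves the full theorem by matching two recurrences over removal of an inner corner --- one for $e_q^{\maj}(P_{\lambda/\mu}(\mathbf{p}))$ (Lemma~\ref{lemma: major mobile recurrence}, proved via the theory of $P$-partitions with a prescribed last element developed in Section~\ref{sec: p-partition}) and one for the right-hand side $H_{\lambda/\mu}(q)$ (Lemma~\ref{lemma: Chevalley qanalog maj}, proved by evaluating the multivariate Pieri--Chevalley identity \eqref{eq: Chevalley} at specific $x_i,y_j$ that encode the sizes $|p_v|$).

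The real gap is your plan for $(\star)$. There is no bijective ``excited-diagram/path model'' underlying the Morales--Pak--Panova proof of Theorem~\ref{thm: MPP1 maj} that can simply be reweighted cell by cell: their argument in \cite{MPP1} proceeds through factorial Schur functions, and their ``combinatorial'' proof in \cite{MPP2} is an induction based on the Pieri--Chevalley formula \eqref{eq: Chevalley}, not a weight-preserving bijection. (The Hillman--Grassl-type bijection in \cite{MPP1} treats the reverse-plane-partition $q$-analogue with natural labeling, not the Schur-labeling analogue relevant here.) So the ``finite but delicate combinatorial check'' you promise is not a check at all --- it is the entire substance of the theorem. To actually prove $(\star)$ you would in practice have to establish a recurrence for each side under removal of an inner corner and match them, which is exactly the content of Lemmas~\ref{lemma: major mobile recurrence} and~\ref{lemma: Chevalley qanalog maj}. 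Your decomposition could streamline Lemma~\ref{lemma: major mobile recurrence} (one now needs only a recurrence for the weighted border-strip generating function rather than for the full mobile), but it does not bypass the Pieri--Chevalley evaluation that drives Lemma~\ref{lemma: Chevalley qanalog maj}.
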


This result is a common refinement of Theorem~\ref{thm: MPP1 maj} and Theorem~\ref{thm: d-complete major index}. Note that Naruse-Okada \cite{NaruseOkada} have a different $q$-analogue of $e_q^{\maj}(P,\omega)$ for a family called \emph{skew $d$-complete} posets with \emph{natural labelings}. See Section~\ref{subsec: skew d-complete}.  

The proof of Theorem~\ref{thm: major mobile nhlf} is based on the method used in \cite{MPP2} to prove \eqref{eq:NHLF} for border strips. This involves the \emph{Pieri--Chevalley formula} \eqref{eq: Chevalley} and a recurrence of linear extensions for mobile posets. The proof of the latter is combinatorial and uses a generalization of Stanley's theory of $(P,\omega)$-partitions \cite{Sta1}. 

By taking $q=1$, we obtain a Naruse hook-length formula for mobile posets as a corollary.

\begin{corollary}[NHLF for mobiles]\label{thm: mobile nhlf}
For a free-standing mobile poset $P_{\lambda/\mu}(\mathbf{p})$ of size $n$, we have:
\begin{equation} \label{eq: main thm}
    e(P_{\lambda/\mu}({\bf p})) = \frac{n!}{H({\bf p})} \sum_{D\in \E(\lambda/\mu)} \prod_{(i,j)\in \gamma} \frac{1}{h'(i,j)},
\end{equation}
where $H(\mathbf{p})$ is the product of hook-lengths of all elements in the $d$-complete posets hanging from $\lambda/\mu$.
\end{corollary}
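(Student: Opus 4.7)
The plan is straightforward: Corollary 1.7 should follow from Theorem 1.6 by taking the $q \to 1$ limit after an appropriate rescaling. The key combinatorial fact is that the total number of elements of the mobile poset satisfies $n = |\lambda/\mu| + |\mathbf{p}|$, so the powers of $(1-q)$ appearing in the two sides of Theorem 1.6 balance exactly.

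Concretely, I would write each factor as $(1-q^k) = (1-q)[k]_q$ with $[k]_q = 1 + q + \cdots + q^{k-1}$. On the left-hand side of Theorem 1.6, the denominator becomes $(1-q)^n \prod_{i=1}^n [i]_q$. On the right-hand side, the product over $v \in \mathbf{p}$ contributes $(1-q)^{|\mathbf{p}|}\prod_v [h(v)]_q$ in the denominator, while each summand indexed by an excited diagram $D$ contributes a denominator of $(1-q)^{|\lambda/\mu|} \prod_{u\in [\lambda]\setminus D} [h'(u)]_q$, since $|[\lambda]\setminus D| = |\lambda|-|\mu| = |\lambda/\mu|$. Hence the factors of $(1-q)^n$ cancel between the two sides, and one obtains an identity of rational functions in $q$ that is regular at $q=1$.

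Next I would pass to the limit $q \to 1$. On the left, $e_q^{\maj}(P_{\lambda/\mu}(\mathbf{p}),\omega)$ specializes at $q=1$ to $e(P_{\lambda/\mu}(\mathbf{p}))$, and $\prod_{i=1}^n [i]_q \to n!$. On the right, $[k]_q \to k$ for every positive integer $k$, and $q^{w'(D)} \to 1$. Thus each $q$-hook $[h'(u)]_q$ becomes the ordinary hook-length $h'(u)$, and $\prod_{v\in\mathbf{p}} [h(v)]_q$ becomes $\prod_{v\in\mathbf{p}} h(v) = H(\mathbf{p})$. Rearranging the resulting equality gives exactly the formula stated in the corollary.

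I do not expect any genuine obstacle: both sides of Theorem 1.6 have a pole of order $n$ at $q=1$ when viewed as rational functions, and the only nontrivial point is to verify that these poles cancel so that the limit makes sense. The bookkeeping of the previous paragraph does exactly this, so the derivation of Corollary 1.7 is essentially a matter of tracking the power of $(1-q)$ and specializing each $q$-integer to its classical counterpart.
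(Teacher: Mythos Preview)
Your proposal is correct and essentially identical to the paper's approach: the paper simply says ``Taking $q=1$ for Theorem~\ref{thm: major mobile nhlf}, we get the proof of Corollary~\ref{thm: mobile nhlf},'' and you have supplied the careful bookkeeping of powers of $(1-q)$ (using $n=|\lambda/\mu|+|\mathbf{p}|$) that makes this specialization rigorous. The paper also mentions an alternative direct route via evaluating $F_{\lambda/\mu}$ at $x_i=\lambda_i-i+1-\sum_{a<i}p_{a,b}$ and $y_j=j-\lambda_j'-\sum_{b\geq j}p_{a,b}$, but your argument matches the primary one given.
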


As an application of this corollary, we give bounds to  generalizations of Euler number defined in \cite{GMM}. See Corollary~\ref{cor: boundary} and Corollary~\ref{cor: zigzag bound}.

Our second result is a $q$-analogue of NHLF for mobile tree posets (the hanging $d$-complete posets are restricted to rooted trees) in terms of the inversion statistic, $e_q^{\inv}(P,\omega): = \sum_{\sigma} q^{\inv(\sigma)}$, where $\sigma$ is a linear extension of $(P,\omega)$.

\begin{theorem}\label{thm: inv nhlf}
For a labeled mobile poset $(P_{\lambda/\mu},\omega)$ with $\omega$ reversed Schur labeling on $[\lambda/\mu]$ and natural labeling on $d$-complete posets,
    \begin{equation}\label{thm: inv q-analogue}
   \frac{e_q^{\inv}(P_{\lambda/\mu},\omega)}{\prod^n_{i=1}(1-q^i)} = \prod_{v\in \mathbf{p}}\frac{1}{1-q^{h(v)}}\sum_{D\in \mathcal{E}(\lambda/\mu)} q^{w(D)+p_D} \prod_{u\in [\lambda]\setminus D} \frac{1}{1-q^{h'(u)}}, \end{equation}
     where $w(D) = \sum_{u\in Br(D)} h(u)$ is the sum of hook-lengths of the supports of broken diagonals and $p_D = \sum_{(i,j)\in [\mu]\setminus D}\sum_{b=j} p_{r,s}$.
\end{theorem}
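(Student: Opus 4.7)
The plan is to adapt the proof of Theorem~\ref{thm: major mobile nhlf}, replacing the major index with the inversion statistic and exploiting two structural features of mobile \emph{tree} posets: each hanging $d$-complete poset $T_{(r,s)}$ is a rooted tree with natural labeling, and the border strip $[\lambda/\mu]$ carries the reversed Schur labeling. The first feature lets me invoke the Björner--Wachs equidistribution theorem: for a forest with natural labeling, $\inv$ and $\maj$ are equidistributed on linear extensions, so
\[e_q^{\inv}(T_{(r,s)},\omega_{\mathrm{nat}}) \,=\, e_q^{\maj}(T_{(r,s)},\omega_{\mathrm{nat}}) \,=\, \frac{[|T_{(r,s)}|]_q!}{\prod_{v\in T_{(r,s)}}[h_{T_{(r,s)}}(v)]_q}.\]
Consequently the product $\prod_{v\in \mathbf{p}}1/(1-q^{h(v)})$ in \eqref{thm: inv q-analogue} is literally the same factor that appears in Theorem~\ref{thm: major mobile nhlf}, and it suffices to analyze the border-strip-plus-shuffle part of the inv generating function.

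To that end I would decompose each linear extension $\sigma$ of $P_{\lambda/\mu}(\mathbf{p})$ as a triple: a linear extension $\tau$ of $[\lambda/\mu]$, linear extensions of each $T_{(r,s)}$, and a shuffle recording how the tree labels interleave into $\tau$. The key identity is the additive splitting
\[\inv(\sigma) \,=\, \inv(\tau) + \sum_{(r,s)} \inv(\sigma|_{T_{(r,s)}}) + I_{\mathrm{cross}}(\sigma),\]
where $I_{\mathrm{cross}}(\sigma)$ counts pairs with one element in a tree and the other in the border strip. Because of the labeling conventions---each tree element exceeds every border strip label at positions weakly above and to the left of its root $(r,s)$---the cross statistic $I_{\mathrm{cross}}(\sigma)$ depends only on $\tau$ and on the sizes $p_{r,s}$ of the hanging trees, not on the internal tree linear extensions. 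Summing $q^{I_{\mathrm{cross}}}$ over all shuffles should produce two effects on the border strip generating function: the hook-length shift $h(u)\mapsto h'(u)=h(u)+\sum_{r\geq i,\,s\geq j}p_{r,s}$ in the product, and an additive $p_D$-correction in the weight. Combined with an inv-analogue of Theorem~\ref{thm: MPP1 maj} for border strips with reversed Schur labeling---accessible by the Pieri--Chevalley and excited-diagram techniques of \cite{MPP2}---this yields the right-hand side of \eqref{thm: inv q-analogue}.

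The main obstacle is the bookkeeping of $I_{\mathrm{cross}}$ and its interaction with the excited diagrams $D$. Unlike the major index, which aggregates descent positions and so behaves transparently under block insertion, the inversion statistic is a global pairwise count, so grafting a tree $T_{(r,s)}$ onto $[\lambda/\mu]$ creates inversions with every subsequent label of $\tau$. Showing that the sum over all shuffles produces precisely the stated hook shift together with the $q^{p_D}$-correction will require a case analysis on the excited diagram: squares $(i,j)\in D$ absorb their cross inversions into the shift $h\mapsto h'$, while squares $(i,j)\in[\mu]\setminus D$ contribute the column sum $\sum_b p_{r_b,s_b}$ to $p_D$. I expect this argument to parallel and refine the analysis of \cite{MPP2}, with the reduction to \cite{MPP2} made possible by the Björner--Wachs collapse of the tree contributions in the opening step.
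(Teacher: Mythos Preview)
Your route is not the paper's. Although you open by saying you will ``adapt the proof of Theorem~\ref{thm: major mobile nhlf},'' the paper's adaptation is literal: it establishes the inv-recurrence $e_q^{\inv}(P_{\lambda/\mu}) = \sum_{\mu\to\nu} q^{n-\omega(u)} e_q^{\inv}(P_{\lambda/\nu})$ via inner-corner removal (Lemma~\ref{lem: inv recurrence}), proves the matching Pieri--Chevalley identity for the right-hand side $\widetilde H_{\lambda/\mu}$ by the \emph{same} specialization of $F_{\lambda/\mu}(\mathbf x,\mathbf y)$ used in Section~\ref{sec: q-analogue major} (Lemma~\ref{lem: Chevalley q analog inv lem}), and then inducts on $|\lambda/\mu|$. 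Bj\"orner--Wachs (Theorem~\ref{thm: inv tree poset}) and the disjoint-union identity (Proposition~\ref{prop: qanalogue disjoint poset inv}) enter only \emph{after} an inner corner is removed and the hanging trees $T_\nu$ become genuine disjoint summands; the extra weight $p_D$ drops out of the algebra of the evaluation, not from any shuffle count.

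Your direct decomposition has a structural gap at exactly the point you flag as ``the main obstacle.'' The border-strip NHLF equates the \emph{total} sum $\sum_\tau q^{\inv(\tau)}$ with the excited-diagram sum $\sum_D q^{w(D)}\prod_u 1/(1-q^{h(u)})$; there is no correspondence between individual linear extensions $\tau$ and individual diagrams $D$. So once you compute, for each fixed $\tau$, the constrained-shuffle sum $\sum q^{I_{\mathrm{cross}}}$, you have a $\tau$-dependent factor that you cannot push through to the excited-diagram side and read off as the shift $h\mapsto h'$ together with $q^{p_D}$ diagram by diagram --- that would require a refinement of the border-strip NHLF tracking strictly more than $\inv(\tau)$, and none is available. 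There is a second issue: the shuffles are not free (every element of $T_{(r,s)}$ must precede $(r,s)$), so the standard $q$-binomial shuffle identities do not apply directly, and your description of the labeling (``each tree element exceeds every border-strip label at positions weakly above and to the left of its root'') does not match the paper's actual labeling $\omega_{\inv}$: see the block partition $P_1,\ldots,P_{k+1}$ in Section~\ref{sec: q-analogue inversion}, which is engineered precisely so that Proposition~\ref{prop: qanalogue disjoint poset inv} applies at each inductive step. The inductive Pieri--Chevalley argument sidesteps both problems by never having to relate a single $\tau$ to a single $D$.
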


\subsection{Paper outline}
In Section~\ref{sec: background}, we give definitions and background results required for the proof. In Section~\ref{sec: p-partition}, we give results for $P$-partition with a fixed point. In Section~\ref{sec: q-analogue major}, we give an example and the proof of Theorem~\ref{thm: major mobile nhlf}. In Section~\ref{sec: application}, we show an application to the Corollary~\ref{thm: mobile nhlf}. In Section~\ref{sec: q-analogue inversion}, we give examples and the proof of the inversion index of the case of $q$-analogue. Lastly, we end with final remarks in Section~\ref{sec: final remarks}.

\section{Background and Preliminaries} \label{sec: background}

\subsection{Posets and linear extensions}

A \emph{partially-ordered set} (poset) is a pair $(P,\leq_P)$ where $P$ is a finite set and $\leq_P$ is a binary relation that is reflexive, anti-symmetric, and transitive. A \emph{linear extension} of an $n$-element poset $P$ is a bijection $f: P \to [n]$ that is order-preserving. The number of SYT of a shape $\lambda/\mu$ is equal to the number of \emph{linear extensions} of a poset of shape $\lambda/\mu$. We denote  the set of linear extensions of $P$ as $\mathcal{L}(P)$, and $e(P) = |\mathcal{L}(P)|$.  


\subsection{Border strips and Mobile Posets}
A \emph{border strip} is a connected skew shape $\lambda/\mu$ containing no $2\times 2$ box. \emph{d-complete posets} are a large class of posets containing rooted tree posets and posets arising from Young diagrams. Given a border strip, we can convert it into a poset by letting the inner corners of the diagram be the maximal points of the corresponding poset. Now we can construct a mobile poset (See Figure~\ref{fig: poset to mobile} for an example).

\begin{definition}[{Garver-Grosser-Matherne-Morales \cite{GMM}}]
A \emph{mobile\footnote{What we call a mobile poset is called a free-standing mobile poset in \cite{GMM}.} (tree) poset} is a poset obtained from a border strip $Q$, by allowing every element $x \in Q$ to cover the maximal element of a nonnegative number of disjoint  $d$-complete (rooted tree) posets.
\end{definition}

\begin{figure}[h]
    \centering
    \includegraphics[width=4.5cm]{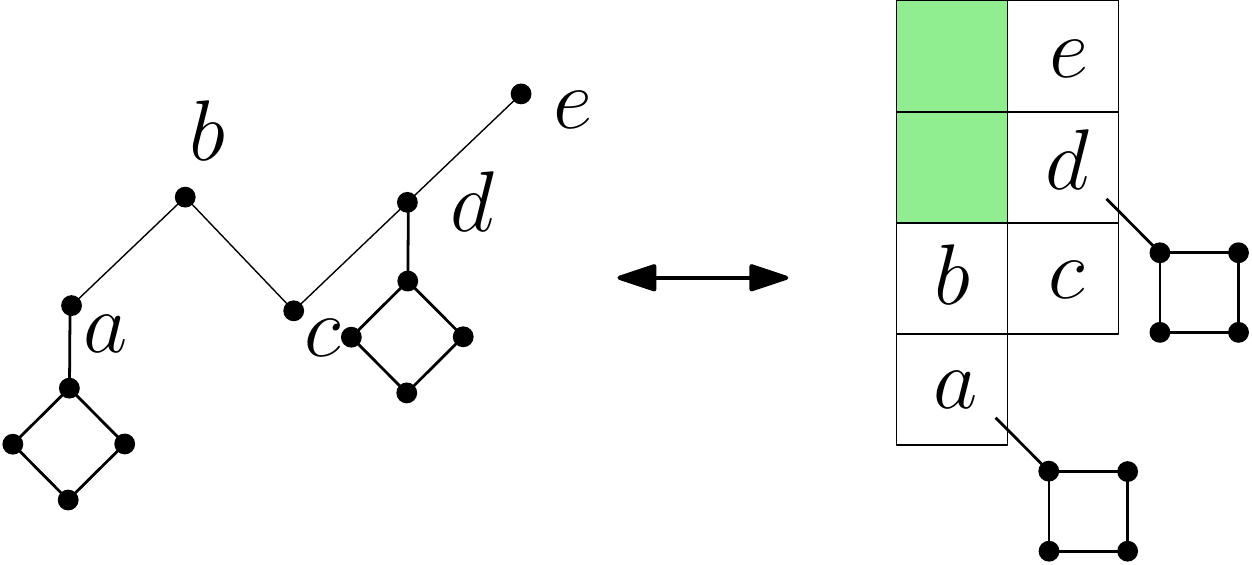}
    \caption{The conversion of a mobile poset to a young diagram with $d$-complete posets attached.}
    \label{fig: poset to mobile}
\end{figure}

\subsection{Excited diagrams and broken diagonals}

Denote $[\lambda/\mu]$ as the skew shape Young diagram of a shape $\lambda/\mu$. An \emph{excited diagram} of $\lambda/\mu$, denoted by $D$, is a subset of $[\lambda]$ obtained from $\mu$ by applying a sequence of \emph{excited moves} that we define next. Let $D\in \E(\lambda/\mu)$, then $(i,j) \in D$ is an \emph{active cell} if $(i+1,j), (i,j+1)$, and $(i+1,j+1)$ are not in $D$. We obtain a new excited diagram by replacing an active cell by $(i+1,i+j)$ (see Figure~\ref{fig: poset to mobile}, (b)). Note that for border strips, the excited diagrams can also be interpreted as the complement of its lattice paths $\gamma$ from $(\lambda_1',1)\to (1,\lambda_1)$ that stay inside $[\lambda]$. (see\cite[Sec. 3]{MPP2}).

For each excited diagram $D\in \E(\lambda/\mu)$ we associate a set of \emph{broken diagonals} $Br(D) \subset [\lambda]\setminus D$ as follows. Start with $D = [\mu]$, then $\Br(D) = \{(i,j)\in\lambda/\mu | i-j = \mu_t-t\}$, where $\mu_t=0$ if $\ell(\mu)<t\leq \ell(\lambda)$. For each active cell $u=(i,j)$ and its excited move $\alpha_u:D\to D'$, we have a corresponding move for the broken diagonal where $\Br(D') = \Br(D)\setminus \{(i+1,j+1)\} \cup \{ (i+1,j)\}\}$. See Figure \ref{fig: excited moves}, (a) and (b).

\begin{figure}[h]
     \begin{subfigure}[normal]{0.3\textwidth}
     \centering
    \includegraphics[height=.8cm]{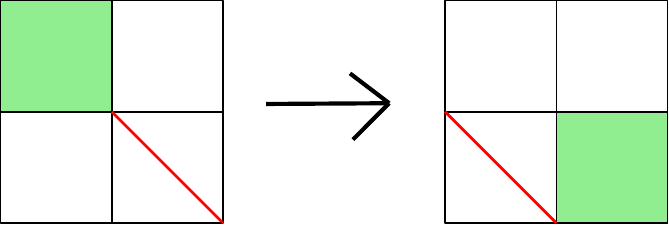}
    \caption{}
    \end{subfigure}
    \quad
    \begin{subfigure}[normal]{0.3\textwidth}
    \centering
    \includegraphics[height=1.5cm]{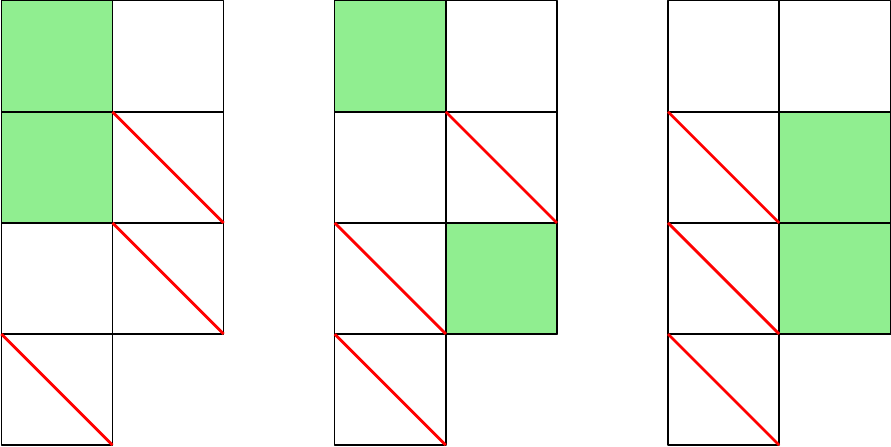}
    \caption{}
    \end{subfigure}
    \caption{(a) Excited move (b) Excited diagrams and the corresponding broken diagonals of $\lambda/\mu =(2,2,2,1)/(1,1)$}
    \label{fig: excited moves}
\end{figure}

\subsection{Multivariate function}

For border strip $\lambda/\mu$, let

\begin{equation}{\label{eq: multivariate}}
F_{\lambda/\mu}(\mathbf{x}, \mathbf{y})= F_{\lambda/\mu}(x_1,\dots, x_{\lambda_1'}, y_1,\dots, y_{\lambda_1}) : = \sum_{D\in \E(\lambda/\mu)} \prod_{(i,j)\in [\lambda]\setminus D} \frac{1}{x_i - y_j}.
\end{equation}

Let $\lambda/\nu$ be the shape obtained by removing an inner corner of $\lambda/\mu$. We denote this subtraction by $\mu\to\nu$. Note that $[\lambda/\nu]$ is a disconnected skew shaped.

We need the following identity of $F_{\lambda/\mu}(\mathbf{x}, \mathbf{y}) $ from \cite{MPP2}.

\begin{lemma}[{Pieri--Chevalley formula \cite[Eq. (6.3)]{MPP2}}] 
\begin{align}\label{eq: Chevalley}
F_{\lambda/\mu} (\mathbf{x},\mathbf{y}) &= \frac{1}{x_1-y_1}\sum_{\mu \to \nu} F_{\lambda/\nu^1}({\bf x},{\bf y}) F_{\lambda/\nu^2}({\bf x},{\bf y}), 
\end{align}
where $\lambda/\nu^1$ and $\lambda/\nu^2$ are the two connected border strips that form $\lambda/\nu$.
\end{lemma}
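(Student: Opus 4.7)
Since this identity is stated as \cite[Eq.~(6.3)]{MPP2}, the quickest path is to invoke the cited proof. For a self-contained combinatorial derivation, my plan is to use the lattice-path interpretation of excited diagrams for border strips recalled in Section~\ref{sec: background}: each $D \in \mathcal{E}(\lambda/\mu)$ is the complement $\gamma = [\lambda]\setminus D$ of a unique lattice path from $(\lambda'_1,1)$ to $(1,\lambda_1)$, so $F_{\lambda/\mu}(\mathbf{x},\mathbf{y}) = \sum_\gamma \prod_{(i,j)\in\gamma} \frac{1}{x_i - y_j}$.

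The aim is then to establish a weight-preserving bijection between excited diagrams $D$ of $\lambda/\mu$ and triples $((r,s), D_1, D_2)$, where $(r,s)$ is an inner corner of $\lambda/\mu$ (a maximal element of its poset, giving the subtraction $\mu \to \nu$) and $D_1, D_2$ are excited diagrams of the two connected border strip components $\lambda/\nu^1, \lambda/\nu^2$ of $\lambda/\nu$. For each path $\gamma$, I would pick a canonical inner corner $(r,s)$ --- say the unique corner compatible with a specific local profile of $\gamma$, such as the NE-most turning point --- and split $\gamma$ into two sub-paths $\gamma_1, \gamma_2$ living inside the bounding boxes of the two components. The reverse map recombines $(r,s), \gamma_1, \gamma_2$ into a single path $\gamma$.

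The two main obstacles I anticipate are (i) pinning down a canonical split rule that makes the correspondence a genuine bijection, and (ii) matching weights so that the extra factor $\frac{1}{x_1-y_1}$ on the RHS is correctly accounted for. The latter is the subtler point: since the cell $(1,1)$ may or may not lie on $\gamma$ (it belongs to $D$ when $(1,1) \in [\mu]$ and has not been pushed down-right by excited moves), the factor $\frac{1}{x_1-y_1}$ does not correspond transparently to a single cell on the LHS. Its role is best understood by expanding the product $F_{\lambda/\nu^1} \cdot F_{\lambda/\nu^2}$ and tracking the algebraic cancellations of terms shared between the two bounding boxes. If the bijective route proves too delicate, the fallback is the algebraic method of \cite{MPP2}: derive the identity from the Jacobi--Trudi-type determinantal expansion of $F_{\lambda/\mu}$ combined with a classical Pieri--Chevalley identity in the ring of factorial Schur functions (equivalently, equivariant cohomology of the flag variety), which converts a subtle cell-bookkeeping argument into a clean algebraic manipulation at the cost of combinatorial transparency.
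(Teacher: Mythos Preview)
The paper does not prove this lemma at all: it is stated with the citation to \cite[Eq.~(6.3)]{MPP2} and then used as a black box in the proofs of Lemma~\ref{lemma: Chevalley qanalog maj} and Lemma~\ref{lem: Chevalley q analog inv lem}. Your opening sentence---simply invoking the cited result---is therefore exactly what the paper does, and nothing more is required here.

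Your additional sketch of a self-contained derivation is thus not comparable to anything in the present paper. A few remarks on it nonetheless: the lattice-path reformulation is indeed the right starting point for border strips, and the two obstacles you flag are genuine. In particular, your worry about the factor $\tfrac{1}{x_1-y_1}$ is well placed: it does \emph{not} arise from a single cell on a path, and a naive ``split the path at a corner'' bijection will not produce it. In \cite{MPP2} the identity is obtained by an algebraic/telescoping argument on the path weights (ultimately the Pieri--Chevalley relation for factorial Schur functions, as you note in your fallback), rather than by a clean weight-preserving bijection of the kind you outline first. So your fallback plan is closer to what actually works; the purely bijective route, as you suspect, would need substantial extra bookkeeping to account for that global factor.
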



\subsection{$q$-analogues of linear extensions}\label{subsec: qanalogue of linear extensions}

A \emph{labeled poset} $(P,\omega)$ is a poset $P$ with a labeling $\omega:P \to [n]$. We call $\omega$ a \emph{natural labeling} if for any $x,y\in P$ with $x<_{P} y$, we have $\omega(x) < \omega(y)$ \cite{Sta1}. We call $\omega$ a \emph{reversed Schur labeling} if the labeling decreases as it follows the path from the bottom left to the top right of the tableau. For Theorem~\ref{thm: inv nhlf} and Theorem~\ref{thm: major mobile nhlf}, we use reversed Schur labeling on $[\lambda/\mu]$ and natural labeling on the $d$-complete posets. In the case of the inversion statistic, for each $\mu\to\nu$, we need $\omega(x_1)>\omega(x_2)$ for all $x_i \in \lambda/\nu_i$ to satisfy the condition for Proposition~\ref{prop: qanalogue disjoint poset inv}

Given a linear extension $f: P \to [n]$, the permutation $\omega \circ f^{-1} \in \mathfrak{S}_n$ is a \emph{linear extension of the labeled poset}, $\mathcal{L}(P,\omega)$. For the major index, we label the poset using the \emph{natural labeling} on the $d$-complete posets and the \emph{reversed Schur labeling} on the border-strip, where the labeling is increasing from right to left and increasing from top to bottom of the diagram (See Figure~\ref{fig:mobile example} (b)). Such labeling is derived from the Schur labeling of $\lambda/\mu$. We use the reversed Schur labeling of instead of the Schur labeling because of the orientation of the conversion from a Young diagram to a poset we have chosen (see Figure~\ref{fig: poset to mobile}).

Recall, $\inv(\sigma)= \#\{i\in [n-1]| \sigma_i > \sigma_j \mbox{ where } i<j\}$ where $\des(\sigma):= \{i \in [n-1] \,|\, \sigma_i > \sigma_{i+1}\}$ and $\maj(\sigma)= \sum_{i\in \des(\sigma)} i$. The two common statistics for $q$-analogues of the number of linear extensions for a labeled poset $(P,\omega)$ are the major index and inversions. There is a version of the major index and the inversion index for labeled $d$-complete posets.

\begin{definition}
The {major index} of a labeled $d$-complete poset $(P,\omega)$ is
\[\maj(P,\omega) = \sum_{x\in \des(P,\omega)}h_{P}(x).\]
\end{definition}

\begin{definition}
The {inversion index} of a labeled poset $(P,\omega)$ is
\[\inv(P,\omega) = \#\{(x,y) \,|\, \omega(y) < \omega(x) \mbox{ and } x<_{P} y\}.\]
\end{definition}

Let  $\mbox{stat}\in \{\maj, \inv\}$, the \emph{major index (inversion) q-analogue} of the number of linear extensions of a labeled poset $(P,\omega)$ is 
\[e_q^{\stat}(P,\omega) := \sum_{\sigma\in \mathcal{L}(P,\omega)}q^{\stat(\sigma)}.
\]

Now we state the $q$-analogue of the hook-length formulas. We define the \emph{q-integer} $[n]_q:= 1+q+\cdots + q^{n-1}$ and the \emph{q-factorial} $[n]_q! := [n]_q\cdots[2]_q[1]_q$.
Peterson and Proctor \cite{Pro} gave a $\maj$ $q$-analogue formula for $d$-complete posets.

\begin{theorem}[{Peterson and Proctor \cite{Pro}}]\label{thm: maj d-complete poset}
Let $(P, \omega)$ be a labeled $d$-complete poset of size $n$ with any labeling. Then,
\[e_q^{\maj}(P,\omega) = q^{\maj(P,\omega)} \frac{[n]_q!}{\prod_{x\in P} [h_{P}(x)]_q}.\]
\end{theorem}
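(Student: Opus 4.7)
My strategy is to factor the proof through Stanley's fundamental theorem of $(P,\omega)$-partitions, which for any labeled $n$-element poset gives
\[
\Omega_{P,\omega}(q) \;:=\; \sum_{\sigma} q^{|\sigma|} \;=\; \frac{e_q^{\maj}(P,\omega)}{\prod_{i=1}^n (1-q^i)},
\]
where the sum runs over $(P,\omega)$-partitions $\sigma\colon P \to \mathbb{Z}_{\geq 0}$. Consequently, the theorem is equivalent to the product identity
\[
\Omega_{P,\omega}(q) \;=\; q^{\maj(P,\omega)} \prod_{x\in P} \frac{1}{1-q^{h_P(x)}}.
\]

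First I would handle the case of a natural labeling $\omega_0$, where every cover is compatible, so $\des(P,\omega_0)=\emptyset$ and $\maj(P,\omega_0)=0$. In this setting the $(P,\omega_0)$-partitions are ordinary reverse $P$-partitions, and the desired product identity is Proctor's hook-length formula for $d$-complete posets (the multivariate generating-function version). I would prove this by induction on the slant-sum decomposition of $d$-complete posets, checking the formula on the standard irreducible building blocks (chains, shapes, and the classical Proctor irreducibles) and verifying that both sides behave multiplicatively under the slant-sum operation. This settles the theorem under natural labeling.

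For an arbitrary labeling $\omega$, I would reduce to the natural case by proving the shift identity $\Omega_{P,\omega}(q) = q^{\maj(P,\omega)}\,\Omega_{P,\omega_0}(q)$. The plan is to exhibit a canonical minimal $(P,\omega)$-partition $\tau_\omega$ with $|\tau_\omega|=\maj(P,\omega)$ such that every $(P,\omega)$-partition $\sigma$ decomposes uniquely as $\sigma=\tau_\omega+\pi$, where $\pi$ is an ordinary reverse $P$-partition. Concretely, one sets $\tau_\omega(x)$ to count the descents of $\omega$ lying in the principal filter above $x$, so that $\sum_{x\in P}\tau_\omega(x) = \sum_{y\in\des(P,\omega)} h_P(y)=\maj(P,\omega)$, matching the needed shift.

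The principal obstacle is verifying that $\tau_\omega$ is a valid $(P,\omega)$-partition and that the decomposition $\sigma=\tau_\omega+\pi$ is bijective. This requires the $d$-complete axioms to control precisely how hooks interact with the cover relations along which descents occur, so the strict/weak conditions on $\sigma$ at descent edges translate into a single global shift by $\tau_\omega$. An alternative route that sidesteps these bijective subtleties is to induct directly on the slant-sum decomposition of $P$, tracking separately how attaching an element at position $x$ multiplies $\Omega_{P,\omega}(q)$ by the factor $\tfrac{1}{1-q^{h_P(x)}}$ and shifts $\maj(P,\omega)$ by $h_P(x)$ when the attached cover is a descent edge.
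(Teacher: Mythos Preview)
The paper does not prove this theorem. It is quoted in Section~\ref{sec: background} as a known result of Peterson--Proctor and is invoked as a black box in the proofs of Theorem~\ref{thm: major mobile nhlf} and Theorem~\ref{thm: inv nhlf}; no argument for it is given or sketched. There is therefore no proof in the paper to compare your proposal against.

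On the proposal itself: the reduction via Stanley's $(P,\omega)$-partition generating function is the right framework, and the natural-labeling case is indeed the substantive content (Proctor's hook-product identity). However, your shift argument for passing to an arbitrary labeling has a concrete gap. You set $\tau_\omega(x)$ equal to the number of descents in the principal filter of $x$ and claim $\sum_{x\in P}\tau_\omega(x)=\sum_{y\in\des(P,\omega)} h_P(y)$. Swapping the order of summation gives $\sum_x \tau_\omega(x)=\sum_{y\in\des(P,\omega)}\lvert\{x:x\le_P y\}\rvert$, so your identity tacitly assumes that $h_P(y)$ equals the size of the principal order ideal below $y$. That holds for rooted forests but fails for general $d$-complete posets: already for the straight shape $\lambda=(3,3)$ the cell $(1,2)$ has hook length $3$ while four cells lie weakly below it in the poset. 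Hence your minimal partition $\tau_\omega$ does not have weight $\maj(P,\omega)$, and the map $\sigma\mapsto\sigma-\tau_\omega$ does not realize the desired $q$-shift. Your alternative route via slant-sum induction and checking the irreducibles is closer to how the result is established in the literature, but the fifteen irreducible families are themselves unbounded in size, so ``checking the irreducibles'' is not a finite verification but precisely the lengthy case analysis that constitutes Proctor's original proof.
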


Bj\"{o}rner and Wachs \cite{BW} gave a $\inv$ $q$-analogue formula for rooted tree posets.

\begin{theorem}[{Bj\"{o}rner and Wachs \cite{BW}}]\label{thm: inv tree poset}
Let $(P, \omega)$ be a rooted tree poset with a natural labeling. Then
\[e_q^{\inv}(P,\omega) = q^{\inv(P,\omega)} \frac{[n]_q!}{\prod_{x\in P} [h_{P}(x)]_q}.\]
\end{theorem}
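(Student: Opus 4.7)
The plan is induction on $n=|T|$, using root-deletion to pass to the subtrees of the root. The base case $n=1$ is trivial.

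Let $r$ be the root of $T$ (the unique minimum, which must receive label $1$ by naturality), and let $T_1,\ldots,T_k$ be the subtrees rooted at the children of $r$, with $|T_i|=n_i$ so that $n=1+\sum_i n_i$. After choosing an appropriate consecutive-subtree natural labeling---so that the label sets $A_i=\omega(T_i)$ form blocks $A_1<A_2<\cdots<A_k$---every linear extension of $T$ is of the form $\sigma=(1,\sigma')$, where $\sigma'$ is a shuffle of linear extensions $\sigma^{(i)}\in\mathcal{L}(T_i,\omega|_{T_i})$. Because $\sigma_1=1$, $\inv(\sigma)=\inv(\sigma')$, and because $A_1<\cdots<A_k$, the cross-inversions between elements of different $T_i$'s depend only on the shuffle pattern $w\in\{1^{n_1}2^{n_2}\cdots k^{n_k}\}$. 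Separating the shuffle and the linear-extension sums,
\[
e_q^{\inv}(T,\omega)\;=\;\qbin{n-1}{n_1,\ldots,n_k}{q}\prod_{i=1}^{k}e_q^{\inv}(T_i,\omega|_{T_i}),
\]
using the standard identity $\sum_{w}q^{\inv(w)}=\qbin{n-1}{n_1,\ldots,n_k}{q}$.

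By induction each $e_q^{\inv}(T_i,\omega|_{T_i})=[n_i]_q!/\prod_{x\in T_i}[h_T(x)]_q$ (since $\inv$ depends only on the relative order of labels, and $h_T(x)=h_{T_i}(x)$ for $x\in T_i$). Substituting and using $\qbin{n-1}{n_1,\ldots,n_k}{q}=[n-1]_q!/\prod_i[n_i]_q!$ together with $h_T(r)=n$, so that $[n]_q!=[n]_q[n-1]_q!$, the product telescopes to $[n]_q!/\prod_{x\in T}[h_T(x)]_q$. As $\inv(T,\omega)=0$ for a naturally labeled $T$, this equals $q^{\inv(T,\omega)}[n]_q!/\prod_{x\in T}[h_T(x)]_q$, the right-hand side of the theorem.

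The main obstacle is justifying that the consecutive-subtree labeling assumed above is without loss of generality from an arbitrary natural labeling (naive label swaps can change $\inv$ on individual linear extensions). One route is via Stanley's transfer theorem: for naturally labeled $(T,\omega)$ it gives $\Omega_{(T,\omega)}(q)=\sum_{\sigma}q^{\maj(\sigma)}/\prod_{i=1}^{n}(1-q^i)$, and combining with the classical reverse-plane-partition product $\Omega_T(q)=\prod_{x}1/(1-q^{h(x)})$ on a tree yields the $\maj$-analogue for every natural $\omega$. One then invokes the Bj\"{o}rner--Wachs equidistribution of $\maj$ and $\inv$ on $\mathcal{L}(T,\omega)$ via an explicit Foata-type bijection on linear extensions. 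Constructing this bijection is the principal combinatorial subtlety, since a global rewrite of each linear extension is required rather than a local swap.
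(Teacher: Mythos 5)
The paper does not prove this statement: it is quoted from Bj\"{o}rner--Wachs \cite{BW} and used as a black box, and the only place it is invoked (the proof of Theorem~\ref{thm: inv nhlf}) involves hanging trees carrying the block-structured labeling $\omega_{\inv}$. So the comparison is between your argument and the cited source rather than anything in the paper.

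Your induction is correct and self-contained for natural labelings in which the label sets of the root's subtrees form consecutive blocks $A_1<\cdots<A_k$: the factorization $e_q^{\inv}(T,\omega)=\qbin{n-1}{n_1,\dots,n_k}{q}\prod_i e_q^{\inv}(T_i,\omega|_{T_i})$ is an iterated application of Proposition~\ref{prop: qanalogue disjoint poset inv}, and the telescoping via $h(r)=n$ is right. The gap is the one you flag yourself, and it is genuine: for an arbitrary natural labeling the cross-inversions between two subtrees with interleaved label sets depend on \emph{which} elements of each subtree occupy which slots, not only on the shuffle pattern, so the $q$-multinomial does not split off and the induction fails. Your proposed repair --- prove the $\maj$ version by $P$-partition transfer and then cite the Bj\"{o}rner--Wachs equidistribution of $\maj$ and $\inv$ on $\mathcal{L}(T,\omega)$ --- is circular here: that equidistribution, realized by an explicit Foata-type bijection which you leave unconstructed, is precisely the content of the theorem you are asked to prove. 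As written you have established the formula for one convenient natural labeling of each tree (which happens to be all this paper needs), not for every natural labeling. To close the gap without appealing to \cite{BW} you would either have to build the bijection, or argue directly that $e_q^{\inv}(T,\omega)$ is unchanged when the labels $i$ and $i+1$ of two incomparable elements are swapped (such swaps preserve naturality and connect all natural labelings), which is where the real work of Bj\"{o}rner--Wachs lies. A minor orientation point: in this paper the hanging trees are rooted at their \emph{maximal} element, so your root-deletion should be read as deletion of the maximum, or transported by poset duality plus label complementation, both of which preserve naturality and $\inv$.
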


\subsection{Identities for $e_q^{\maj}$ from the theory of $P$-partitions}

In this section, we state the definition of $(P,\omega)$-partition and its connection to $e_q^{\maj}(P,\omega)$. 
\begin{definition}\cite{Sta1}
A $(P,\omega)$-partition is a map $f: P \to \mathbb{N}$ satisfying the conditions:
\begin{enumerate}
    \item If $s\leq t$ in P, then $f(s) \geq$ f(t).
    \item If $s\leq t$ and $\omega(s) > \omega(t)$, then $f(s) > f(t)$.
\end{enumerate}
\end{definition}
If $\sum_{t\in P} f(t) = n$, then we say $f$ is a $(P,\omega)$-partition of $n$. We denote the set of all $(P,\omega)$-partitions as $\mathcal{A}(P,\omega)$.
First, we have the following definition.

\begin{definition}
Let $w = w_1w_2\dots w_n \in \mathfrak{S}_n$. We say that the function $f':[n] \to \mathbb{N}$ is $w$-compatible if the following two conditions hold.
\begin{enumerate}
    \item $f'(w_1)\geq f'(w_2) \geq \cdots \geq f'(w_n)$.
    \item $f'(w_i) > f'(w_{i+1})$ if $w_i > w_{i+1}$.
\end{enumerate}
\end{definition}

Define $f': [p] \to \mathbb{N}$ by $f'(i) = f(\omega^{-1}(i))$, and let $S_w$ be the set of all functions $f$ such that $f'$ is $w$-compatible. We have the following result called the fundamental lemma on $(P,\omega)$-partitions.

\begin{lemma}[\cite{Sta1}, Lemma 3.15.3]\label{lemma: fundamental p-partition}
A function $f:P \to \mathbb{N}$ is a $(P,\omega)$-partition if and only if $f'$ is $w$-compatible with some $w\in \mathcal{L}(P,\omega)$. In other words,
\[\mathcal{A}(P,\omega) = \dot\bigcup_{w\in \mathcal{L}(P,\omega)}S_w,\] where $\dot\cup$ denotes disjoint union.
\end{lemma}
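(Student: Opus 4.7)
The plan is to define, for each $(P,\omega)$-partition $f$, a canonical permutation $w(f) \in \mathfrak{S}_n$ with $f \in S_{w(f)}$, and then verify: (i) $w(f) \in \mathcal{L}(P,\omega)$, (ii) every $f \in S_w$ with $w \in \mathcal{L}(P,\omega)$ is a $(P,\omega)$-partition, and (iii) the union on the right is disjoint because $w$ is uniquely recoverable from $f$. To construct $w(f)$, I would impose a strict total order on $P$ by $x \prec y$ iff $f(x) > f(y)$, or $f(x) = f(y)$ and $\omega(x) < \omega(y)$ (well-defined since $\omega$ is a bijection), and set $w(f) = \omega(x_1)\,\omega(x_2)\cdots\omega(x_n)$ where $x_1 \prec x_2 \prec \cdots \prec x_n$ is the resulting enumeration.

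For step (i), suppose $s <_P t$. Axiom~(1) of the $(P,\omega)$-partition definition gives $f(s) \geq f(t)$; if the inequality is strict then $s \prec t$, while if $f(s) = f(t)$ then axiom~(2) rules out $\omega(s) > \omega(t)$, forcing $\omega(s) < \omega(t)$ and again $s \prec t$. For $w(f)$-compatibility of $f$, the weak decrease $f(x_1) \geq \cdots \geq f(x_n)$ is built in, and whenever $\omega(x_i) > \omega(x_{i+1})$ the relation $x_i \prec x_{i+1}$ cannot come from the tie-breaking rule, hence must come from $f(x_i) > f(x_{i+1})$.

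Step (ii) is the main technical point. Fix $w \in \mathcal{L}(P,\omega)$ and $f \in S_w$, and let $s \leq_P t$ with $w_i = \omega(s)$, $w_j = \omega(t)$, so $i \leq j$ because $w$ is a linear extension. The weak decrease of $f'$ along $w_i, \ldots, w_j$ yields $f(s) \geq f(t)$. If additionally $\omega(s) > \omega(t)$, then $w_i > w_j$, and one observes that at least one descent $w_k > w_{k+1}$ must occur with $i \leq k < j$ (else the subword would be weakly increasing, contradicting $w_i > w_j$); the second compatibility condition then gives $f'(w_k) > f'(w_{k+1})$, which combined with the surrounding weak inequalities upgrades the chain to $f(s) > f(t)$.

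For step (iii), the two compatibility conditions force any $w$ with $f \in S_w$ to list the elements of each level set of $f$ in order of increasing $\omega$-label, so necessarily $w = w(f)$, which yields disjointness. The only non-routine part of the argument is the descent-chasing in step (ii): one must manufacture a strict inequality between arbitrary comparable elements from only adjacent-position data. The rest is a direct unpacking of the definitions.
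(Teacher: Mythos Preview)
Your argument is correct and is essentially the standard proof from Stanley's text: build the canonical $w(f)$ by sorting $P$ first by decreasing $f$-value and then by increasing $\omega$-label, verify that $w(f)\in\mathcal{L}(P,\omega)$ and $f\in S_{w(f)}$, and use the descent-chasing observation in step~(ii) to recover the strict inequality in axiom~(2). The disjointness argument via the forced ordering on each level set is also exactly right.

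Note, however, that the paper does \emph{not} supply its own proof of this lemma; it simply quotes the statement from \cite{Sta1} (Lemma~3.15.3) and uses it as a black box. So there is no ``paper's proof'' to compare against beyond the reference to Stanley, and your write-up matches that classical argument.
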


 Let $a_P(n)$ denote the number of $(P,\omega)$-partitions of $n$ and let $G_{P,\omega}(x) = \sum a_P(n)x^n$ be generating function of these partitions. Stanley gave the following specialization of the generating function associated with $(P,\omega)$-partition:

\begin{theorem}[Stanley \cite{Sta1}]\label{thm: G(P)}
Let $(P,\omega)$ be a labeled poset of size $p$. Then we have,
\[G_{P,\omega}(q) = \frac{e_q^{\maj}(P,\omega)}{\prod_{i=1}^p(1-q^i)}. \]
\end{theorem}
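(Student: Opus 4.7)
The plan is to reduce the computation to a sum over linear extensions using Lemma~\ref{lemma: fundamental p-partition} and then evaluate the contribution of each $w \in \mathcal{L}(P,\omega)$ by a change of variables. Starting from the definition $G_{P,\omega}(q) = \sum_{f \in \mathcal{A}(P,\omega)} q^{|f|}$ with $|f| := \sum_{t \in P} f(t)$, the fundamental lemma provides the disjoint decomposition $\mathcal{A}(P,\omega) = \dot\bigcup_{w \in \mathcal{L}(P,\omega)} S_w$, so the generating function immediately splits as a sum over linear extensions of generating functions over the sets $S_w$. Thus the whole problem collapses to evaluating $\sum_{f \in S_w} q^{|f|}$ for each fixed $w$.

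For a fixed $w = w_1 \cdots w_p \in \mathcal{L}(P,\omega)$, I would re-encode the $w$-compatible functions $f': [p] \to \mathbb{N}$ by their consecutive differences. Setting $a_i := f'(w_i) - f'(w_{i+1})$ for $1 \leq i \leq p-1$ and $a_p := f'(w_p)$, the two $w$-compatibility conditions translate cleanly into $a_i \geq 0$ for all $i$, with the strict inequality forcing $a_i \geq 1$ exactly when $i \in \des(w)$. Solving the triangular system gives $f'(w_j) = a_j + a_{j+1} + \cdots + a_p$, which is manifestly a bijection between $S_w$ and the set of such integer sequences. Telescoping yields $|f| = \sum_{i=1}^p i \cdot a_i$, so the sum over $S_w$ factors as a product over positions $i$, contributing $1/(1-q^i)$ when $i \notin \des(w)$ and $q^i/(1-q^i)$ when $i \in \des(w)$. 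Collecting the numerator powers gives $q^{\sum_{i \in \des(w)} i} = q^{\maj(w)}$, while the denominators combine to $\prod_{i=1}^p (1-q^i)$, independent of $w$.

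Summing over $w \in \mathcal{L}(P,\omega)$ and recognizing the numerator as $e_q^{\maj}(P,\omega)$ then yields the claimed identity. The argument is essentially bookkeeping once Lemma~\ref{lemma: fundamental p-partition} is invoked; the only mildly delicate step is verifying that the change of variables really is a bijection onto the constrained sequences, but this is routine since the triangular system is invertible and the descent condition matches the strictness of the inequalities exactly. No substantive obstacle remains.
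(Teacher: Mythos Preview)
Your argument is correct and is in fact the standard proof of this result. The paper itself does not supply a proof of Theorem~\ref{thm: G(P)}; it is quoted as a known result of Stanley~\cite{Sta1}, so there is no in-paper argument to compare against. What you have written is essentially Stanley's original proof: invoke the fundamental Lemma~\ref{lemma: fundamental p-partition} to split $\mathcal{A}(P,\omega)$ over linear extensions, then use the triangular change of variables $a_i = f'(w_i)-f'(w_{i+1})$, $a_p = f'(w_p)$ to turn each $S_w$ into a product of geometric series whose numerator contributes exactly $q^{\maj(w)}$. One small remark: you might note explicitly that $p\notin\des(w)$, so the constraint on $a_p$ is simply $a_p\ge 0$, consistent with your factorization; otherwise nothing is missing.
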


For any disjoint union of labeled posets $(P,\omega_1) + (Q,\omega_2)$, by definition, we have since \[G_{P+Q,\omega_1+\omega_2}(q)= G_{P,\omega_1}(q) \cdot G_{Q,\omega_2}(q).\] 
By applying Theorem~\ref{thm: G(P)} to above equation, we have the following corollary.

\begin{corollary}\cite[Exercise 3.162(a)]{Sta1}\label{cor: qanalogue disjoint maj}
Let $(P+Q,\omega)$ be a labeled disjoint sum of posets with $|P+Q| = n$ and $|P|=n$. For any labeling $\omega$, we have
\[e_q^{\maj}(P +Q,\omega) = \qbin{n}{p}{q} e_q^{\maj}(P,\omega_1)\cdot e_q^{\maj}(Q,\omega_2),\]
where $\omega_1$ and $\omega_2$ is the labeling obtained by restricting $\omega$ to $P$ and $Q$ respectively.
\end{corollary}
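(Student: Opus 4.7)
The plan is to prove this as a direct application of Theorem~\ref{thm: G(P)} together with the obvious multiplicativity of the $(P,\omega)$-partition generating function over disjoint unions. The only subtlety is recognizing that the ratio of $(q;q)$-products that arises in the rearrangement is exactly the Gaussian binomial coefficient $\qbin{n}{p}{q}$.

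First I would establish the multiplicativity $G_{P+Q,\omega}(q) = G_{P,\omega_1}(q)\cdot G_{Q,\omega_2}(q)$, as stated just before the corollary. This is immediate from the definition of a $(P,\omega)$-partition: since there are no relations between elements of $P$ and elements of $Q$ in the disjoint sum, a $(P+Q,\omega)$-partition is the same data as an independent choice of a $(P,\omega_1)$-partition and a $(Q,\omega_2)$-partition, and the value being partitioned is additive, so the generating functions multiply. (Alternatively, one can observe this via Lemma~\ref{lemma: fundamental p-partition} by a shuffle argument on linear extensions.)

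Next, I would substitute Theorem~\ref{thm: G(P)} into both sides of this identity, taking $|P|=p$ and $|Q|=n-p$. This gives
\[
\frac{e_q^{\maj}(P+Q,\omega)}{\prod_{i=1}^{n}(1-q^i)} \;=\; \frac{e_q^{\maj}(P,\omega_1)}{\prod_{i=1}^{p}(1-q^i)} \cdot \frac{e_q^{\maj}(Q,\omega_2)}{\prod_{i=1}^{n-p}(1-q^i)}.
\]
Clearing denominators yields
\[
e_q^{\maj}(P+Q,\omega) \;=\; \frac{\prod_{i=1}^{n}(1-q^i)}{\prod_{i=1}^{p}(1-q^i)\,\prod_{i=1}^{n-p}(1-q^i)} \cdot e_q^{\maj}(P,\omega_1) \cdot e_q^{\maj}(Q,\omega_2).
\]

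Finally, I would identify the prefactor with $\qbin{n}{p}{q}$: dividing numerator and denominator by $(1-q)^n$ converts each $\prod_{i=1}^{k}(1-q^i)$ into $[k]_q!$, so the prefactor equals $[n]_q!/([p]_q!\,[n-p]_q!) = \qbin{n}{p}{q}$, giving the claimed identity. There is no real obstacle here; the entire proof is a one-line manipulation once the generating function identity and Theorem~\ref{thm: G(P)} are in hand. The only minor point worth flagging (and the reason this statement appears as a corollary rather than a triviality) is that the identity holds for \emph{any} labeling $\omega$, natural or not, which is built into Theorem~\ref{thm: G(P)} and requires no additional hypothesis on how $\omega$ interacts with the decomposition $P+Q$.
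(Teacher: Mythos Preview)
Your proposal is correct and follows exactly the paper's approach: the paper derives the corollary by noting the multiplicativity $G_{P+Q,\omega}(q)=G_{P,\omega_1}(q)\cdot G_{Q,\omega_2}(q)$ and then applying Theorem~\ref{thm: G(P)}. You have simply spelled out the denominator-clearing and the identification of the resulting ratio as $\qbin{n}{p}{q}$ in more detail than the paper does.
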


\subsection{Identities for $e_q^{\inv}$ for disjoint union of posets}

We also have the following disjoint union of poset identity for the case of inversion index.

\begin{proposition}[Bj\"{o}rner-Wachs,\cite{BW}]\label{prop: qanalogue disjoint poset inv}

Let $(P+Q,\omega)$ be a labeled disjoint sum of posets with $|P+Q| = n$ and $|P| = p$. Suppose that $\omega$ has the property that the label of every element of $P$ is smaller than the label of every element of $Q$. We have
\[e_q^{\inv}(P+Q,\omega) = \qbin{n}{p}{q} e_q^{\inv}(P,\omega_1)\cdot e_q^{\inv}(Q,\omega_2),\]
where $\omega_1$ and $\omega_2$ are the labeling obtained by restricting $\omega$ to $P$ and $Q$ respectively.
\end{proposition}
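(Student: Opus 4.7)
The plan is to decompose each linear extension of $P+Q$ as a shuffle of linear extensions of $P$ and $Q$, and then show that $\inv$ splits as a sum of three independent terms: intra-$P$ inversions, intra-$Q$ inversions, and cross inversions that depend only on the shuffle pattern.

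First, I would set up the bijection
\[\mathcal{L}(P+Q,\omega) \;\longleftrightarrow\; \mathcal{L}(P,\omega_1) \times \mathcal{L}(Q,\omega_2) \times \binom{[n]}{p},\]
sending a permutation word $\pi \in \mathcal{L}(P+Q,\omega)$ to the triple $(\sigma,\tau,S)$, where $S$ is the set of positions in $\pi$ occupied by $P$-labels, and $\sigma$, $\tau$ are the subwords obtained by reading the $P$-labels and $Q$-labels respectively in the order they appear in $\pi$. Because $P$ and $Q$ are incomparable in the disjoint sum $P+Q$, any such shuffle automatically respects the order of $P+Q$, so the map is a bijection.

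Second, I would partition the inversions of $\pi$ according to whether the two positions involved are both in $S$, both in $[n]\setminus S$, or one of each. The first two classes contribute exactly $\inv(\sigma)$ and $\inv(\tau)$, since restricting $\pi$ to positions of $S$ (resp.\ $[n]\setminus S$) gives $\sigma$ (resp.\ $\tau$) and inversions between elements of the same block coincide with inversions of the subword. For a mixed pair $(i,j)$ with $i<j$, the hypothesis that every $P$-label is smaller than every $Q$-label forces the pair to be an inversion precisely when position $i$ holds a $Q$-label and position $j$ holds a $P$-label. Consequently the cross count depends only on $S$: it equals $\inv(w_S)$, where $w_S$ is the binary word of length $n$ with the smaller letter at positions of $S$ and the larger letter elsewhere.

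Third, I would invoke the classical shuffle interpretation of the $q$-binomial coefficient, $\sum_{S \in \binom{[n]}{p}} q^{\inv(w_S)} = \qbin{n}{p}{q}$. Combining with the decomposition above gives
\[e_q^{\inv}(P+Q,\omega) \;=\; \sum_{\sigma,\tau,S} q^{\inv(\sigma)+\inv(\tau)+\inv(w_S)} \;=\; \qbin{n}{p}{q}\, e_q^{\inv}(P,\omega_1)\, e_q^{\inv}(Q,\omega_2),\]
as desired. The only nontrivial step is the clean three-way decomposition of $\inv(\pi)$, and the hypothesis on $\omega$ is exactly what is needed to make the cross term depend on $S$ alone rather than on the particular values of $\sigma$ and $\tau$; without it the sum would not factor and the product on the right-hand side would fail to separate.
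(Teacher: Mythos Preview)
Your argument is correct: the shuffle bijection together with the three-way decomposition of inversions (intra-$P$, intra-$Q$, and cross), plus the standard $q$-binomial interpretation of cross inversions, is exactly the classical proof. Note, however, that the paper does not give its own proof of this proposition; it is simply quoted from Bj\"orner--Wachs \cite{BW}, so there is nothing in the paper to compare your argument against beyond observing that your approach is the expected one.
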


\begin{remark}
Note that the disjoint poset identity for the inversion statistic has a more specific condition on the poset labeling than the major index does. When applying the Theorem~\ref{thm: inv nhlf}, we need to label the mobile posets so that it satisfies the condition of Proposition~\ref{prop: qanalogue disjoint poset inv}. More detail about the labeling of the poset for the case of inversion index is stated in Section~\ref{sec: q-analogue inversion}.
\end{remark}

\section{$P$-partition with a fixed point}\label{sec: p-partition}

In this section, we discuss a variation of Theorem~\ref{thm: G(P)}, where we fix the position of an element in $P$. As a result, we have Corollary~\ref{cor: e_q identity from G product}, which is used to prove the recurrence in $e_q^{\maj}$.

Denote $\mathcal{L}(P,\omega;s)$ as the set of linear extensions of $(P,\omega)$ that end with $\omega(s)$ for a fixed $s \in P$ and $e_q^{\maj}(P,\omega;s) = \sum_{\sigma\in \mathcal{L}(P,\omega;s)} q^{\maj(\sigma)}$. We omit $\omega$ and assume that the poset is labeled $(P,\omega)$ unless specified. There is analogue of Corollary~\ref{cor: qanalogue disjoint maj} for $e_q^{\maj}(P,\omega;s)$.

We can restrict the conditions on $\mathcal{A}(P,\omega)$ so that it only considers the linear extensions that ends with a fixed element $s\in P$. We define the following restricted $(P,\omega)$-partition:

\begin{definition}
A $(P,\omega;s)$-partition is a map $f:P\to \mathbb{N}$ satisfying the following conditions:
\begin{enumerate}
    \item $f$ is a $(P,\omega)$-partition
    \item $f(s) \leq f(t)$ for all $t \in P$
    \item  $f(s)=f(t)$ then $\omega(s)>\omega(t)$
\end{enumerate}
\end{definition}

Let $\mathcal{A}(P,\omega;s)$ denote the set of such partitions and let 
\[
\mathcal{L}(P,\omega; \{s\}): = \{\sigma \in \mathcal{L}(P,\omega)| \sigma \mbox{ ends with }\omega(s) \}.
\]

\begin{lemma}\label{lemma: fundamental p-partition restricted}
A function $f:P \to \mathbb{N}$ is a $(P,\omega;s)$-partition if and only if $f'$ is $w$-compatible with some $w\in \mathcal{L}(P,\omega)$. In other words,
\[\mathcal{A}(P,\omega;s) = \dot\bigcup_{w\in \mathcal{L}(P,\omega;s)}S_w.\]

\end{lemma}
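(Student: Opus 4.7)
The plan is to adapt the proof of the original fundamental lemma (Lemma~\ref{lemma: fundamental p-partition}) by imposing the extra endpoint condition. The high-level strategy is: every $(P,\omega;s)$-partition is in particular a $(P,\omega)$-partition, so it lies in exactly one of the pieces $S_w$ in Stanley's disjoint decomposition $\mathcal{A}(P,\omega)=\dot\bigcup_{w\in\mathcal{L}(P,\omega)}S_w$; I will show that the extra conditions (2) and (3) in the definition of a $(P,\omega;s)$-partition are exactly equivalent to forcing $w_n=\omega(s)$, i.e.\ to $w\in\mathcal{L}(P,\omega;s)$.

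For the forward direction, let $f\in\mathcal{A}(P,\omega;s)$ and let $w\in\mathcal{L}(P,\omega)$ be the unique linear extension provided by Lemma~\ref{lemma: fundamental p-partition} such that $f'$ is $w$-compatible. By condition (1) of $w$-compatibility, $f'(w_n)$ is the minimum value of $f'$, and by condition (2) of the $(P,\omega;s)$-partition this minimum equals $f(s)=f'(\omega(s))$. Let $t=\omega^{-1}(w_n)$. Then $f(t)=f(s)$, so condition (3) gives $\omega(s)>\omega(t)=w_n$. Suppose for contradiction that $t\neq s$, so $\omega(s)$ appears somewhere earlier in $w$, say $w_j=\omega(s)$ with $j<n$. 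Since $f'(w_j)=f(s)=f'(w_n)$ is the minimum, all the values $f'(w_j),f'(w_{j+1}),\dots,f'(w_n)$ are equal; by condition (2) of $w$-compatibility this forces $w_j<w_{j+1}<\cdots<w_n$, contradicting $w_j=\omega(s)>w_n$. Hence $t=s$ and $w\in\mathcal{L}(P,\omega;s)$.

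For the converse, take $w\in\mathcal{L}(P,\omega;s)$ and $f\in S_w$. By Lemma~\ref{lemma: fundamental p-partition}, $f$ is already a $(P,\omega)$-partition, so condition (1) of the $(P,\omega;s)$-partition holds. Since $w_n=\omega(s)$, we have $f(s)=f'(w_n)=\min_i f'(w_i)$, which gives condition (2). For condition (3), if $f(s)=f(t)$ then setting $w_j=\omega(t)$ we get $f'(w_j)=f'(w_n)$, so all intermediate values are equal and the $w$-compatibility strictness clause forces $w_j<w_{j+1}<\cdots<w_n=\omega(s)$, yielding $\omega(t)<\omega(s)$. Disjointness of the union is inherited directly from the disjointness in Lemma~\ref{lemma: fundamental p-partition}, since $\mathcal{L}(P,\omega;s)\subset\mathcal{L}(P,\omega)$.

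The main subtlety, and the only step that requires any real thought, is the forward direction: correctly translating the ``tie-breaking by larger label'' condition (3) of a $(P,\omega;s)$-partition into the statement that $\omega(s)$ must sit at position $n$ of $w$ rather than merely somewhere among the minimum-valued positions. The argument above handles this by exploiting the strict-inequality clause of $w$-compatibility on the plateau of minimum $f'$-values, which forces labels to ascend on that plateau and hence puts the largest label $\omega(s)$ at the very end.
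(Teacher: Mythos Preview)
Your proof is correct and follows the same approach as the paper, namely restricting Lemma~\ref{lemma: fundamental p-partition} to the linear extensions ending in $\omega(s)$. The paper's own proof is a one-line assertion that conditions (2) and (3) of a $(P,\omega;s)$-partition amount exactly to requiring the compatible $w$ to lie in $\mathcal{L}(P,\omega;s)$, whereas you have supplied the details verifying this equivalence in both directions; the only cosmetic issue is that your invocation of condition~(3) to get $\omega(s)>\omega(t)$ should be placed after the assumption $t\neq s$ rather than before it.
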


\begin{proof}
By the definition of a $(P,\omega;s)$-partition, the set $\mathcal{A}(P,\omega;s)$ consists of the $(P,\omega)$-partitions such that $f'$ is $w$-compatible with some $w \in \mathcal{L}(P,\omega; s)$. Then we can restrict the Lemma~\ref{lemma: fundamental p-partition} to $\mathcal{L}(P,\omega;s)$.
\end{proof}


Let $a_P^s(n)$ be the number of $(P,\omega;s)$-partition of size $n$, and let $G_{P;s}(x) = \sum a_P^s(n)x^n$ be the generating function. Then we can restrict Theorem~\ref{thm: G(P)} to $\mathcal{L}(P,\omega;s)$. Denote $e_q^{\maj}(P,\omega;\{s\}):=\sum_{\sigma \in \mathcal{L}(P_{\lambda/\mu},\omega; \{s\})} q^{\maj(\sigma)}$. 

\begin{theorem}\label{thm: G(P) restricted}
Let $(P,\omega)$ be a labeled poset of size $p$ and $s\in P$. Then we have,
\begin{equation*}
    G_{P,\omega;s}(q) = \frac{e_q^{\maj}(P,\omega;s)}{\prod_{i=1}^p(1-q^i)}.
\end{equation*}
\end{theorem}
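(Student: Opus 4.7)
The plan is to mirror Stanley's proof of Theorem~\ref{thm: G(P)}, replacing the unrestricted fundamental lemma with its restricted analogue, Lemma~\ref{lemma: fundamental p-partition restricted}. The key point is that the restriction to linear extensions ending at $\omega(s)$ does not affect the interior mechanics of the proof for each individual $w$; it only restricts the index set of the outer sum. So my strategy is to reduce the problem quickly to the standard generating function identity for $w$-compatible functions.

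First I would use Lemma~\ref{lemma: fundamental p-partition restricted} to decompose
\[
G_{P,\omega;s}(q) \;=\; \sum_{f \in \mathcal{A}(P,\omega;s)} q^{|f|}
\;=\; \sum_{w \in \mathcal{L}(P,\omega;s)} \sum_{f' \in S_w} q^{f'(1)+\cdots+f'(p)}.
\]
Next I would invoke the standard generating-function identity
\[
\sum_{f' \in S_w} q^{f'(1)+\cdots+f'(p)} \;=\; \frac{q^{\maj(w)}}{\prod_{i=1}^{p}(1-q^i)},
\]
which is the content of the unrestricted case (Theorem~\ref{thm: G(P)}) for a single $w$. This identity follows from the bijection $f'(w_i) \mapsto (g_i,\, d_i)$, where $d_i = \#\{j \geq i : w_j > w_{j+1}\}$ records the forced descent excess and $g_i$ is the resulting weakly decreasing sequence, so that $\sum_i f'(w_i) = \maj(w) + \sum_i g_i$ with $g_1 \geq \cdots \geq g_p \geq 0$. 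Summing the geometric series over such $(g_1,\dots,g_p)$ gives $\prod_{i=1}^{p}(1-q^i)^{-1}$, producing the claimed factor.

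Finally I would combine the two displays, pulling the common denominator out of the sum, to obtain
\[
G_{P,\omega;s}(q) \;=\; \frac{1}{\prod_{i=1}^p(1-q^i)} \sum_{w \in \mathcal{L}(P,\omega;s)} q^{\maj(w)} \;=\; \frac{e_q^{\maj}(P,\omega;s)}{\prod_{i=1}^p(1-q^i)},
\]
which is the statement of Theorem~\ref{thm: G(P) restricted}.

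There is essentially no hard step here; the conceptual work has been done in establishing Lemma~\ref{lemma: fundamental p-partition restricted}, which already enforces the three defining conditions of a $(P,\omega;s)$-partition (in particular the minimality of $f(s)$ and the strict inequality $f(s) > f(t)$ whenever $f(s)=f(t)$ is forbidden by $\omega(s)>\omega(t)$). The only item that deserves a brief sanity check is that the restricted union on the right-hand side of the lemma is indeed disjoint, which is inherited from the disjointness in Lemma~\ref{lemma: fundamental p-partition}, since $\mathcal{L}(P,\omega;s) \subseteq \mathcal{L}(P,\omega)$. After that, the argument is a direct specialization and the proof is essentially a one-line deduction from Lemma~\ref{lemma: fundamental p-partition restricted} combined with the standard $w$-compatible generating function.
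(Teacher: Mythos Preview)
Your proposal is correct and follows exactly the same approach as the paper: the paper's proof is the single sentence ``This is a consequence of Lemma~\ref{lemma: fundamental p-partition restricted},'' and you have simply written out the routine deduction from that lemma via the standard $w$-compatible generating function identity underlying Theorem~\ref{thm: G(P)}.
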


\begin{proof}
This is a consequence of Lemma~\ref{lemma: fundamental p-partition restricted}.
\end{proof}

We have the following lemma for the disjoint union of posets with the restriction.

\begin{lemma}\label{lemma: GH product restricted}
Let $(P+Q,\omega)$ be a labeled disjoint union of posets and fix $s\in P$ such that $\omega(s) > \omega(t)$ for all $t\in Q$. Let $p = |P|$ and $n = |P+Q|$. Then we have
\begin{equation*}\label{eq: G product}
(1-q^n)\cdot G_{P+Q;s}(q)  = (1-q^p)\cdot G_{P;s}(q)\cdot G_Q(q).    
\end{equation*}

\end{lemma}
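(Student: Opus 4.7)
The plan is to prove the identity by introducing a ``normalized'' version of $G_{P;s}(q)$ that tracks only $(P,\omega;s)$-partitions with $f(s)=0$, then showing that the normalized version factors cleanly over the disjoint union. Define
\[\widetilde{G}_{P;s}(q) \;:=\; \sum_{\substack{f \in \mathcal{A}(P,\omega;s) \\ f(s)=0}} q^{|f|}, \qquad \widetilde{G}_{P+Q;s}(q) \;:=\; \sum_{\substack{f \in \mathcal{A}(P+Q,\omega;s) \\ f(s)=0}} q^{|f|},\]
where $|f| = \sum_{t} f(t)$. The first step is to observe that for any $(P,\omega;s)$-partition $f$, the translated map $f - f(s)$ is a $(P,\omega;s)$-partition with $f(s)=0$, and conversely one recovers $f$ by adding back any nonnegative integer $k$ to every element. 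Since adding $k$ uniformly shifts the size by $kp$, this bijection yields
\[G_{P;s}(q) \;=\; \widetilde{G}_{P;s}(q) \cdot \sum_{k\geq 0} q^{kp} \;=\; \frac{\widetilde{G}_{P;s}(q)}{1-q^p},\]
so $(1-q^p)\,G_{P;s}(q) = \widetilde{G}_{P;s}(q)$. Exactly the same argument, but using $n = |P+Q|$, gives $(1-q^n)\,G_{P+Q;s}(q) = \widetilde{G}_{P+Q;s}(q)$.

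The second step is to show $\widetilde{G}_{P+Q;s}(q) = \widetilde{G}_{P;s}(q) \cdot G_Q(q)$. Given $f \in \mathcal{A}(P+Q,\omega;s)$ with $f(s)=0$, let $f_P := f|_P$ and $f_Q := f|_Q$. Since there are no comparability relations between $P$ and $Q$ in the disjoint union, $f_P$ is a $(P,\omega;s)$-partition with $f_P(s)=0$ and $f_Q$ is a $(Q,\omega)$-partition. Conversely, given any such pair $(f_P, f_Q)$, the glued function $f$ is a $(P+Q,\omega;s)$-partition with $f(s)=0$: the only nontrivial requirement is condition (3) of a $(P+Q,\omega;s)$-partition for $t\in Q$, namely that $f(s)=f(t)=0$ forces $\omega(s) > \omega(t)$, and this is exactly the hypothesis $\omega(s) > \omega(t)$ for all $t \in Q$. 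Since $|f| = |f_P| + |f_Q|$, the generating function factors as claimed.

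Combining the two steps gives
\[(1-q^n)\,G_{P+Q;s}(q) \;=\; \widetilde{G}_{P+Q;s}(q) \;=\; \widetilde{G}_{P;s}(q) \cdot G_Q(q) \;=\; (1-q^p)\,G_{P;s}(q) \cdot G_Q(q),\]
which is the desired identity. The only delicate point in the argument is verifying the tie-breaking condition in the second step, where the hypothesis on $\omega(s)$ being larger than every label in $Q$ is used in an essential way; without it, gluing two partitions with $f(s) = f(t) = 0$ for some $t \in Q$ with $\omega(s) < \omega(t)$ would fail to produce a valid $(P+Q,\omega;s)$-partition. Everything else is a direct consequence of how $(P,\omega;s)$-partitions interact with disjoint unions and uniform translation.
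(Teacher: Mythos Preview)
Your proof is correct and follows essentially the same approach as the paper: both arguments introduce the auxiliary generating function for $(P,\omega;s)$-partitions with $f(s)=0$ (the paper calls it $H_{P;s}(q)$, you call it $\widetilde{G}_{P;s}(q)$), identify it with $(1-q^p)\,G_{P;s}(q)$ via the translation-by-$f(s)$ bijection, and then factor it over the disjoint union using the hypothesis $\omega(s)>\omega(t)$ for $t\in Q$ to handle the tie-breaking condition. Your presentation of the first step via the bijection $f\mapsto (f-f(s),\,f(s))$ is slightly cleaner than the paper's injection/difference phrasing, but the content is identical.
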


\begin{proof}
To prove the lemma, we first give a combinatorial interpretation of $H_{P;s}(q):=(1-q^p)\cdot G_{P;s}(q)$. Consider the difference $G_{P;s}-q^p G_{P;s}$. We build an injection from the $(P;s)$-partition of size $m-p$ to the $(P;s)$-partition of size $m$. The coefficients of $q^m$ from the generating function $q^p \cdot G_{P;s}(q)$ counts the number of $(P;s)$-partitions of size $m-p$. Note that for any $(P;s)$-partition $f$ of size $m-p$, we can obtain a $P$-partition of size $m$ by adding an element for each part of $f$. Likewise, given any $(P;s)$-partition $g$ of size $m$ such that $g(i)>0$ for all $i$, we can obtain a $(P;s)$-partition of size $m-p$ by subtracting one from each part of $g$. Then the coefficients of $q^m$ of $H_{P;s}(q)$ counts the number of $(P;s)$-partitions $g$  of size $m$ with $g(s)$ is a minimum value, $\omega(s)>\omega(t)$ for all $t\in P$ where $g(s)=g(t)$, and with at least one zero value. Thus it is necessary and sufficient for $g(s)=0$. Similarly, $H_{P+Q;s}(q)$ is a generating function for $(P+Q;s)$-partition $f$ of size $n$ such that (i) $f(s) = 0$ and (ii) $f(s)> f(t)$ for all $t \in P+Q$ such that $f(t) = 0$.

Then consider the right hand side of the equation. The coefficients of $x^m$ counts size of a disjoint union of $(P;s)$-partition $f_1$ of size $k$ counted in $H_{P;s}(q)$ and $Q$-partition $f_2$ of size $n-k$ for some $k=0,\ldots,n$. Such disjoint union is equivalent to the $(P+Q;s)$-partitions counted in $H_{+Q;s}(q)$. To see this note that such $(P+Q;s)$-partition $f$ satisfies condition (i), namely  $f(s)=f_1(s)=0$. Next, we verify that $f$ satisfies the condition (ii). If $t\in P$ with $f(t)=f_1(t)=0$ then $\omega(s) > \omega(t)$ since $\sigma_1$ satisfies condition (ii). If $t\in Q$ with $f_2(t) =0$, then by assumption $\omega(s) >\omega(t)$.  Thus, we have that $H_{P+Q;s}(q) = H_{P;s}(q) \cdot G_Q(q)$ as desired.
\end{proof}

Then by applying Theorem~\ref{thm: G(P) restricted} to Lemma~\ref{lemma: GH product restricted}, we have the following corollary which we will use to prove our main result in the next section.

\begin{corollary} \label{cor: e_q identity from G product}
Let $(P+Q,\omega)$ be a labeled disjoint sum of posets and fix $s\in P$ such that $\omega(s) > \omega(t)$ for all $t\in Q$. Let $|P+Q|=n$ and $ |P|=p$. For any labeling $\omega$, we have
\begin{equation*}
   e_q^{\maj}(P+Q;s) = \qbin{n-1}{p-1}{q}e_q^{\maj}(P;s)e_q^{\maj}(Q).
\end{equation*}
\end{corollary}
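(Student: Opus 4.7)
The plan is to derive this identity as a direct consequence of Lemma~\ref{lemma: GH product restricted} and Theorem~\ref{thm: G(P) restricted} (together with the ordinary form Theorem~\ref{thm: G(P)} for $Q$), by rewriting every generating function in the lemma in terms of an $e_q^{\maj}$.

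First I would apply Theorem~\ref{thm: G(P) restricted} to $(P+Q,\omega)$ and to $(P,\omega)$, and apply Theorem~\ref{thm: G(P)} to $(Q,\omega)$, obtaining
\[
G_{P+Q;s}(q) \,=\, \frac{e_q^{\maj}(P+Q;s)}{\prod_{i=1}^{n}(1-q^i)}, \qquad G_{P;s}(q) \,=\, \frac{e_q^{\maj}(P;s)}{\prod_{i=1}^{p}(1-q^i)}, \qquad G_{Q}(q) \,=\, \frac{e_q^{\maj}(Q)}{\prod_{i=1}^{n-p}(1-q^i)}.
\]
Substituting these into the identity $(1-q^n)\,G_{P+Q;s}(q)=(1-q^p)\,G_{P;s}(q)\,G_Q(q)$ from Lemma~\ref{lemma: GH product restricted}, and cancelling the factor $(1-q^n)$ on the left against $\prod_{i=1}^{n}(1-q^i)$ to leave $\prod_{i=1}^{n-1}(1-q^i)$, and similarly cancelling $(1-q^p)$ on the right, I get
\[
\frac{e_q^{\maj}(P+Q;s)}{\prod_{i=1}^{n-1}(1-q^i)} \;=\; \frac{e_q^{\maj}(P;s)\,e_q^{\maj}(Q)}{\prod_{i=1}^{p-1}(1-q^i)\;\prod_{i=1}^{n-p}(1-q^i)}.
\]

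Solving for $e_q^{\maj}(P+Q;s)$, the resulting prefactor is exactly
\[
\frac{\prod_{i=1}^{n-1}(1-q^i)}{\prod_{i=1}^{p-1}(1-q^i)\,\prod_{i=1}^{n-p}(1-q^i)} \,=\, \frac{[n-1]_q!}{[p-1]_q!\,[n-p]_q!} \,=\, \qbin{n-1}{p-1}{q},
\]
which gives the claimed identity.

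There is no real obstacle here: all combinatorial content has already been absorbed into Lemma~\ref{lemma: GH product restricted} (which is where the hypothesis $\omega(s)>\omega(t)$ for all $t\in Q$ is used) and into the fundamental lemma on $(P,\omega)$-partitions via Theorem~\ref{thm: G(P) restricted}. The only things to be careful about are keeping track of the sizes $n$, $p$, $n-p$ so that the $q$-binomial coefficient comes out with the shift $\qbin{n-1}{p-1}{q}$ rather than $\qbin{n}{p}{q}$ as in Corollary~\ref{cor: qanalogue disjoint maj}; this shift is exactly the trace of the extra factors $(1-q^n)$ and $(1-q^p)$ appearing in Lemma~\ref{lemma: GH product restricted}, and it reflects the fact that fixing $s$ at the end of the linear extension effectively removes one element from the merging process.
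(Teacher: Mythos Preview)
Your proof is correct and is exactly the approach the paper indicates: it states that the corollary follows by applying Theorem~\ref{thm: G(P) restricted} to Lemma~\ref{lemma: GH product restricted}, which is precisely what you did (together with Theorem~\ref{thm: G(P)} for $Q$). The cancellation and identification of the resulting coefficient as $\qbin{n-1}{p-1}{q}$ are carried out correctly.
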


\section{A Major index $q$-analogue}\label{sec: q-analogue major}

In this section we give the proof of Theorem~\ref{thm: major mobile nhlf}. The proof follows the proof of the NHLF for border strips in \cite{MPP2}. We need to first define the hook-lengths of mobile posets. Given a mobile poset $P_{\lambda/\mu}(\mathbf{p})$, define the hook-length of $(i,j) \in [\lambda]$ as following:
\begin{equation} \label{eq: def modified hook-length}
h'(i,j) = \lambda_i - i + \lambda_j' - j +1 + \sum_{a\geq i,\\ b\geq j} p_{a,b}.
\end{equation}

In other words, it is the usual hook-length of the cell in $\lambda$ plus the size of the $d$-complete posets that are attached on the segment of the border strip inside of the hook of $(i,j)$ (see Figure \ref{fig:mobile example} (a)). We provide an example of the of the theorem below.

\begin{figure}[h]
\centering
\begin{subfigure}[t]{0.3\textwidth}
\centering
\includegraphics[height = 2.5cm]{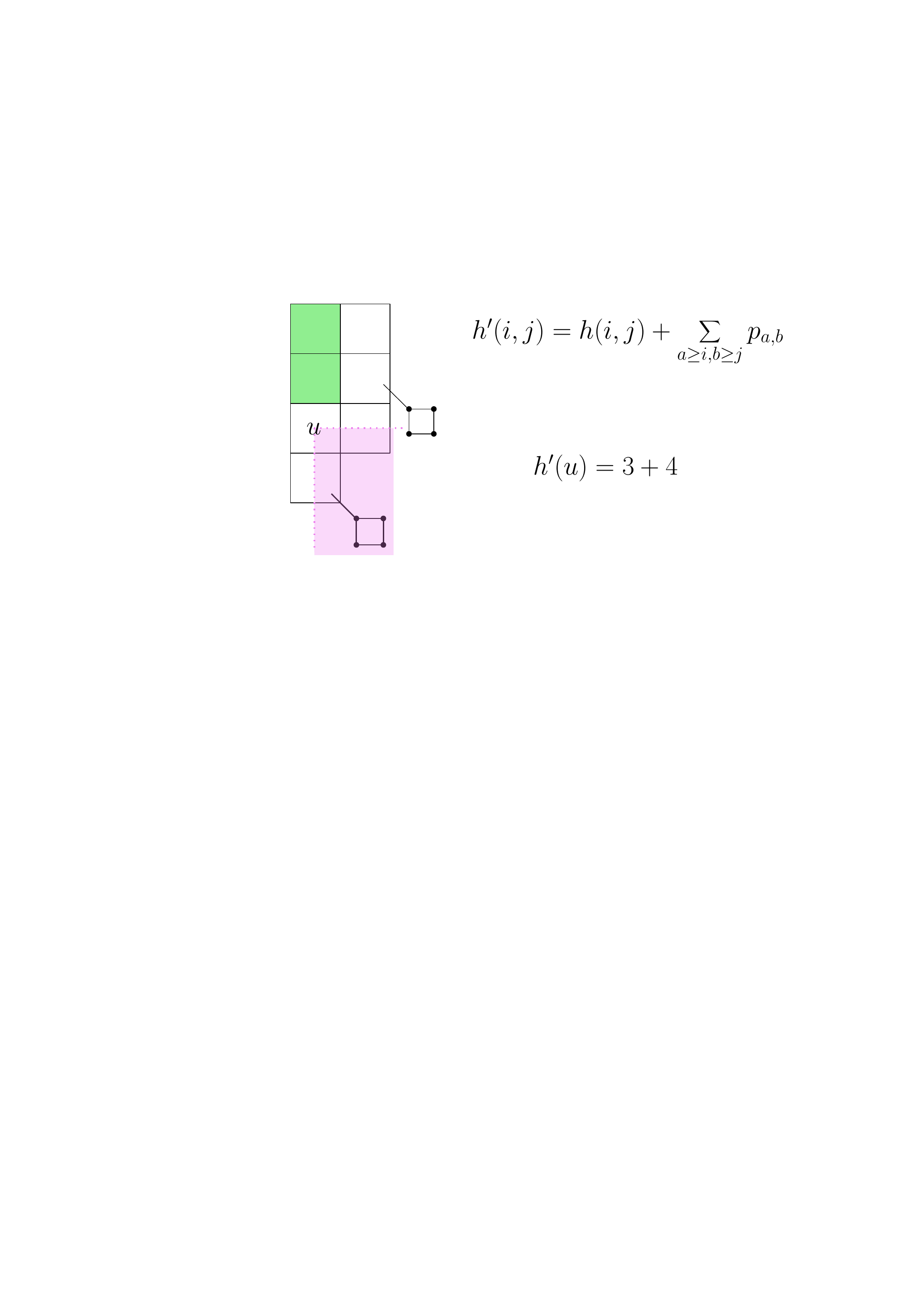}
\caption{}
\end{subfigure}
\quad
\begin{subfigure}[t]{0.3\textwidth}
\centering
\includegraphics[height = 2.5cm]{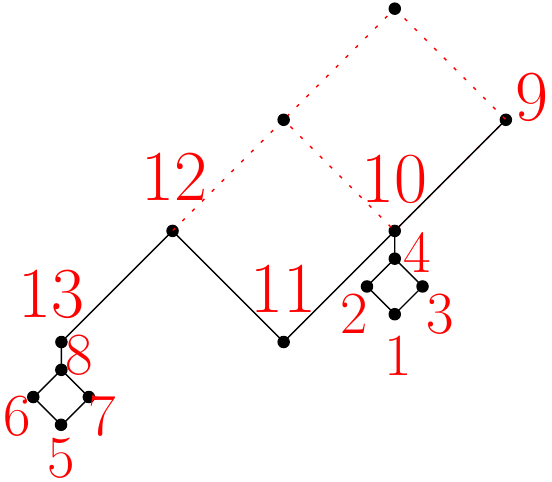}
\caption{}
\end{subfigure}
\quad
\begin{subfigure}[t]{0.3\textwidth}
\centering
\includegraphics[height=2.5cm]{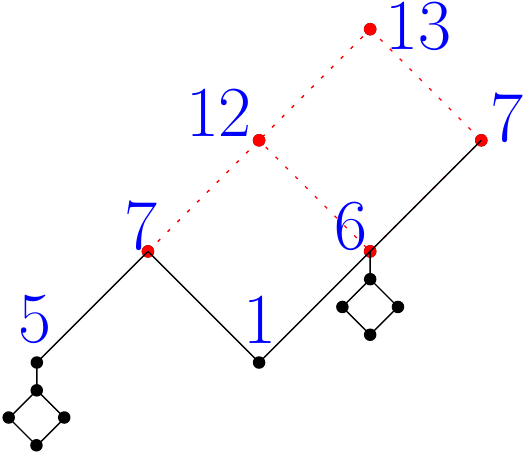}
\caption{}
\end{subfigure}
 \caption{(a): $h'(u)$ is the usual hook-length plus the size of $d$-complete posets in the shaded area. (b): mobile poset with  \textcolor{red}{reversed Schur labeling}. (c): mobile poset with  \textcolor{blue}{hook-lengths}.}
    \label{fig:mobile example}
\end{figure}

\begin{example}\label{example: major}
Consider  the  poset $(P_{2221/11},\omega)$ from Figure~\ref{fig:mobile example} (b). By Theorem~\ref{thm: major mobile nhlf}, one can check that
\begin{align*}
    e_q^{\maj}(P)  &= \frac{[13]!}{[1]^2[2]^4[3]^2}\left(\frac{q^{12}}{[5][7][1][6][7]} + \frac{q^{18}}{[5][7][12][6][7]} + \frac{q^{24}}{[5][7] [12] [13] [7]} \right)\\
    &=  q^{61} + 2q^{60} + 6q^{59}+ 11q^{58} +\cdots + 6q^{14} + 2q^{13} + q^{12}.
\end{align*}
\end{example}

We first introduce two lemmas required for the proof. We provide the proof of each lemma in  Section \ref{sec: proof of eq maj lemma} and Section \ref{sec: proof of Chevalley lemma}.

We have the following recurrence lemma for the the $q$-analogue of linear extensions for the major index.

\begin{lemma}\label{lemma: major mobile recurrence}
    For a labeled mobile poset $(P_{\lambda/\mu}(\mathbf{p}),\omega)$, where $\omega$ is a reversed Schur labeling, 
    \begin{equation}\label{eq: major mobile recurrence}
        e_q^{\maj}(P_{\lambda/\mu},\omega) = \sum_{\mu\to\nu}q^{|P_{\lambda/\nu_1}|} e_q^{\maj}(P_{\lambda/\nu},\omega_\nu)
    \end{equation}
where $P_{\lambda/\nu_1}$ is the left disconnected poset of $P_{\lambda/\nu}$, and $\omega_\nu$ is the restricted labeling of $\omega$ onto $P_{\lambda/\nu}$.
\end{lemma}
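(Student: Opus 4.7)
The plan is to establish the recurrence by partitioning $\mathcal{L}(P_{\lambda/\mu}(\mathbf{p}),\omega)$ according to a distinguished outer corner $s=s(\sigma)$ of $\mu$ attached to each linear extension $\sigma$, and then applying the fixed-point identity of Corollary~\ref{cor: e_q identity from G product}. For each such outer corner (equivalently each $\mu\to\nu=\mu\cup\{s\}$), removing $s$ together with its hanging $d$-complete poset $p_s$ splits $\lambda/\nu=(\lambda/\mu)\setminus\{s\}$ into an upper-right piece $P_{\lambda/\nu_1}$ and a lower-left piece $P_{\lambda/\nu_2}$ (either may be empty when $s$ sits at an end of the border strip). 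The reversed Schur labeling on the border strip together with the natural labeling on the $d$-complete posets guarantees that every label appearing in $P_{\lambda/\nu_2}\cup\{s\}\cup p_s$ is strictly larger than every label in $P_{\lambda/\nu_1}$, which is precisely the hypothesis needed to invoke Corollary~\ref{cor: e_q identity from G product}.

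For the distinguished corner, I plan to characterize $s(\sigma)$ by a natural structural feature of $\sigma$ read as a word: look at the maximal terminal block of $\sigma$ consisting of cells drawn from a single upper-right piece $P_{\lambda/\nu_1}$, and let $s(\sigma)$ be the unique outer corner $s$ realizing this block. After verifying that this gives a disjoint-union decomposition $\mathcal{L}(P_{\lambda/\mu}(\mathbf{p}),\omega)=\bigsqcup_s \mathcal{L}_s$, the next step is to compute $\sum_{\sigma\in\mathcal{L}_s}q^{\maj(\sigma)}$ by applying Corollary~\ref{cor: e_q identity from G product} to the disjoint union $(P_{\lambda/\nu_2}\cup\{s\}\cup p_s)+P_{\lambda/\nu_1}$, taking $s$ as the fixed final element of the first part. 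The $q^{|P_{\lambda/\nu_1}|}$ shift in the recurrence will arise from the $q^{p-1}$-factor in the $q$-binomial of Corollary~\ref{cor: e_q identity from G product}, traced back to the $q^p$-factor produced in the proof of Lemma~\ref{lemma: GH product restricted}. Summing over all outer corners $s$ of $\mu$ then yields the desired recurrence.

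The principal obstacle will be the major-index bookkeeping: one must verify that shifting each linear extension of $P_{\lambda/\nu_1}$ by $|P_{\lambda/\nu_2}|+|p_s|+1$ positions, combined with the descent contributions arising at the boundary between $P_{\lambda/\nu_2}\cup\{s\}\cup p_s$ and $P_{\lambda/\nu_1}$ under the reversed Schur labeling, collapses to a shift of exactly $|P_{\lambda/\nu_1}|$ in $\maj$. Pinning down the correct definition of $s(\sigma)$ so that the partition behaves as claimed—and so that, under the canonical bijection with $\mathcal{L}(P_{\lambda/\nu},\omega_\nu)$, the final element of the prefix block is indeed $s$ (or a cell of $p_s$)—will require careful combinatorial bookkeeping, relying essentially on the decreasing nature of $\omega$ along the border strip path.
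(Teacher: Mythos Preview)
Your strategy --- partition linear extensions by an inner corner and invoke Corollary~\ref{cor: e_q identity from G product} --- is the same route the paper takes, but your execution has real gaps.

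First, the ``maximal terminal block'' definition of $s(\sigma)$ is both unnecessary and unclear. The partition you actually want is the obvious one: $s(\sigma)=\sigma_n$, the last letter, which is forced to be a maximal element of $P_{\lambda/\mu}$ and hence an inner corner. That is exactly what the paper does, and with it the decomposition $\mathcal{L}(P_{\lambda/\mu},\omega)=\bigsqcup_{\mu\to\nu}\mathcal{L}(P_{\lambda/\mu},\omega;\{u\})$ is immediate. Also note that in the paper's convention $P_{\lambda/\nu_1}$ is the \emph{left} (bottom-left) piece, which carries the \emph{larger} labels under the reversed Schur labeling --- the opposite of what you wrote.

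Second, and more seriously, your single application of Corollary~\ref{cor: e_q identity from G product} does not finish the job. The poset $(P_{\lambda/\nu_2}\cup\{s\}\cup p_s)+P_{\lambda/\nu_1}$ is \emph{not} $P_{\lambda/\mu}$ (in $P_{\lambda/\mu}$ the corner $s$ also covers its border-strip neighbour in the other piece). Even granting that the sets of linear extensions ending in $s$ coincide, Corollary~\ref{cor: e_q identity from G product} yields
\[
e_q^{\maj}(P_{\lambda/\mu};\{s\}) \,=\, \qbin{n-1}{p-1}{q}\, e_q^{\maj}(P;\{s\})\, e_q^{\maj}(Q),
\]
with $P=P_{\lambda/\nu_2}\cup\{s\}\cup p_s$; the factor $e_q^{\maj}(P;\{s\})$ is a quantity of exactly the same type you started with (now on a smaller mobile that still has several maximal elements). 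There is no ``$q^{p-1}$-factor'' in that corollary to produce the shift; the corollary contains only a $q$-binomial.

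The paper closes this gap by inserting one extra step before invoking Corollary~\ref{cor: e_q identity from G product}: after fixing $\sigma_n=u$, it further splits on whether $\sigma_{n-1}$ lies in the large-label piece or the small-label piece, giving
\[
e_q^{\maj}(P_{\lambda/\mu};\{u\}) \,=\, q^{n-1}\,e_q^{\maj}(P_{\lambda/\nu};[\lambda/\nu_1]) \,+\, e_q^{\maj}(P_{\lambda/\nu};[\lambda/\nu_2]).
\]
Each summand is now a fixed-end quantity on the genuinely disjoint poset $P_{\lambda/\nu}$, so Corollary~\ref{cor: e_q identity from G product} applies cleanly; combining the two via the $q$-Pascal recurrence for $\txtqbin{n-1}{\,\cdot\,}{q}$ collapses to $q^{|P_{\lambda/\nu_1}|}\,e_q^{\maj}(P_{\lambda/\nu})$. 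That second split on $\sigma_{n-1}$ is the step your plan is missing.
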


Next, we have the following Pieri--Chevalley formula. Denote the RHS of \eqref{thm: major mobile nhlf} as $H_{\lambda/\mu}(q)$. 

\[
H_{\lambda/\mu}(q) := \prod_{v\in \mathbf{p}}\frac{1}{1-q^{h(v)}}\sum_{D\in \mathcal{E}(\lambda/\mu)} q^{w'(D)} \prod_{u\in [\lambda]\setminus D} \frac{1}{1-q^{h'(u)}}
\]

\begin{lemma}\label{lemma: Chevalley qanalog maj}
\begin{equation}\label{eq: Chevalley qanalog}
    (1-q^n)\cdot H_{\lambda/\mu}(q) =   \sum_{\mu\to\nu} \frac{q^{|P_{\lambda/\nu^1}|}}{\prod_{v\in T_\nu} (1-q^{h(v)})}  \cdot H_{\lambda/\nu^1}(q)\cdot H_{\lambda/\nu^2}(q),
\end{equation}
 where $T_\nu$ is the union of the $d$-complete posets that were hanging on the removed inner corner $u$.
\end{lemma}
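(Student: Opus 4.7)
The plan is to derive the identity from the multivariate Pieri--Chevalley formula \eqref{eq: Chevalley} via a suitable specialization, generalizing the border-strip approach of \cite{MPP2} to incorporate the modified hook-lengths $h'(i,j)$ arising from the hanging $d$-complete posets.

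First I would factor out the $d$-complete prefactor common to both sides. Writing $\tilde H_{\lambda/\mu}(q) := \sum_{D\in\E(\lambda/\mu)} q^{w'(D)}\prod_{u\in[\lambda]\setminus D}\tfrac{1}{1-q^{h'(u)}}$ so that $H_{\lambda/\mu}(q) = \prod_{v\in\mathbf{p}}\tfrac{1}{1-q^{h(v)}}\cdot \tilde H_{\lambda/\mu}(q)$, and using the decomposition $\mathbf{p} = \mathbf{p}^1 \sqcup \mathbf{p}^2 \sqcup T_\nu$ of the hangings across the two border-strip pieces and the removed corner, the factor $\tfrac{1}{\prod_{v\in T_\nu}(1-q^{h(v)})}$ on the RHS of the lemma restores exactly the missing $T_\nu$-contribution to the full product $\prod_{v\in \mathbf{p}}\tfrac{1}{1-q^{h(v)}}$. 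The lemma therefore reduces to the cleaner identity
\[
(1-q^n)\,\tilde H_{\lambda/\mu}(q) \;=\; \sum_{\mu\to\nu} q^{|P_{\lambda/\nu^1}|}\,\tilde H_{\lambda/\nu^1}(q)\,\tilde H_{\lambda/\nu^2}(q).
\]

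Next I would apply \eqref{eq: Chevalley} under the substitution $x_i = q^{\alpha_i}$, $y_j = q^{\beta_j}$ with $\alpha_i, \beta_j$ chosen so that $\beta_j - \alpha_i = h'(i,j)$ for each $(i,j)\in[\lambda]$. For ordinary border strips, \cite{MPP2} takes $\alpha_i = i-\lambda_i$ and $\beta_j = \lambda'_j-j+1$, realizing $\beta_j - \alpha_i = h(i,j)$; here the additional term $\sum_{a\geq i, b\geq j}p_{a,b}$ in $h'(i,j)$ must be absorbed into $\alpha_i$ and $\beta_j$ via partial row/column sums of the hangings. Under this specialization each $(x_i - y_j)^{-1}$ becomes a $q$-power times $(1-q^{h'(i,j)})^{-1}$, and a standard excited-move bookkeeping---noting that an excited move shifts a broken-diagonal cell from $(i+1,j+1)$ to $(i+1,j)$ and modifies the accumulated exponent by precisely $h'$ of the moving cell---shows that $F_{\lambda/\mu}(\mathbf{x},\mathbf{y})$ specializes to $q^c\cdot\tilde H_{\lambda/\mu}(q)$ for a $D$-independent constant $c = c(\lambda,\mu,\mathbf{p})$.

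Finally I would compare constants in the specialized \eqref{eq: Chevalley}: the prefactor $(x_1 - y_1)^{-1}$ evaluates to $\tfrac{q^{-\alpha_1}}{1-q^n}$, which after normalization produces the $(1-q^n)$ on the LHS of the reduced identity, while the $q^{|P_{\lambda/\nu^1}|}$ on the RHS arises from the mismatch between the normalization constants of $\tilde H_{\lambda/\mu}$, $\tilde H_{\lambda/\nu^1}$, and $\tilde H_{\lambda/\nu^2}$ (the sizes $|P_{\lambda/\nu^k}|$ count both border-strip cells and attached hangings on each piece). The main obstacle I expect is the specialization step itself: the modified hook $h'(i,j)$ has a non-separable cross-term $\sum_{a\geq i, b\geq j}p_{a,b}$ that in general cannot be written as $f(i)+g(j)$, so the straightforward row/column substitution of \cite{MPP2} does not literally extend. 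Finding exponents $\alpha_i,\beta_j$ absorbing this cross-term---likely by exploiting the staircase structure of the border strip (no $2\times 2$ boxes) to constrain the support of the hangings---and balancing any residual discrepancy against the compensating $q^{|P_{\lambda/\nu^1}|}$ factor, will be the delicate heart of the proof.
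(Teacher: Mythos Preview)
Your proposal is correct and follows essentially the same route as the paper: specialize the Pieri--Chevalley identity \eqref{eq: Chevalley} at suitable $q$-powers, identify each side with the corresponding $H$ (up to a global $q$-power and the $d$-complete product), and match the constants to produce $q^{|P_{\lambda/\nu^1}|}$. The obstacle you flag is resolved exactly as you guess: the paper takes $x_i = q^{\lambda_i - i + 1 - \sum_{a<i} p_{a,b}}$ and $y_j = q^{j - \lambda'_j - \sum_{b\geq j} p_{a,b}}$, and the border-strip support of the hangings makes $\alpha_i-\beta_j=h'(i,j)$ on the relevant cells; the one step that is \emph{not} routine is the $D$-independence of the residual exponent (the paper's $\widehat p_D$), which is isolated as its own lemma and proved by induction on excited moves, together with a final lemma computing $\sum p_{a,b}+\widehat p_{\nu^1}+\widehat p_{\nu^2}-\widehat p_\mu=|\mathbf{p}_1|$ to extract the factor $q^{|P_{\lambda/\nu^1}|}$.
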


We are now ready to give the proof of Theorem~\ref{thm: major mobile nhlf}.

\begin{proof}[Proof of Theorem~\ref{thm: major mobile nhlf}]

We first evaluate the multivariate formula $F_{\lambda/\mu}$ at $x=q^{\lambda_i - i + 1 - \sum_{a< i} p_{a,b}}$ and $y_j = q^{j-\lambda'_j - \sum_{b\geq j} p_{a,b}}$. We denote $e_q^{\maj}(P,\omega)$ as $e_q(P)$ unless specified. We show that $e_q^{\maj}(P_{\lambda/\mu}(\mathbf{p})) = \prod_{i=1}^n(1-q^i) H_{\lambda/\mu}(q)$ by induction on $|\lambda/\mu|$ using Lemma~\ref{lemma: major mobile recurrence}. Note that $\lambda/\nu$ is disconnected and
\begin{equation}\label{eq: disconnected}
    P_{\lambda/\nu} =P_{\lambda/\nu^1}+P_{\lambda/\nu^2} +T_\nu,
\end{equation} 

where $T_\nu$ is the union of the $d$-complete posets that were hanging on the removed inner corner $u$. Denote $|P_{\lambda/\nu^j}|$ as $p_j$. By induction, we have for $j=1,2$

\[\frac{e_q(P_{\lambda/\nu^j})}{[p_j]_q!} = \prod_{i=1}^{p_j}(1-q^i)\cdot 
{H}_{\lambda/\nu^j} \cdot \frac{(1-q)^{p_j}}{\prod_{i=1}^{p_j}(1-q^i)} ={(1-q)^{p_j} } \cdot {H}_{\lambda/\nu^j},\]

and by Theorem~\ref{thm: maj d-complete poset}, for each $T_i\subset T_\nu$ we have, 
\[\frac{e_q(T_i)}{[t_i]_q!} = \frac{q^{\maj(T_i)}}{\prod_{v\in T_i}[h(v)]_q} =\frac{(1-q)^{t_i}}{\prod_{v\in T_i}(1-q^{h(v)})}.\]

 Note that $T_i$ are natural labeling, so $\maj(T_i)=0$. 
 
 Using Corollary~\ref{cor: qanalogue disjoint maj} and the equations above, we have

\[ e_q(P_{\lambda/\nu}) =\frac{\prod_{i=1}^{n-1}(1-q^i)}{\prod_{v\in T_{\nu}} (1-q^{h(v)})}  H_{\lambda/\nu^1}(q)\cdot H_{\lambda/\nu^2}(q). \]

We now apply the equation to \eqref{eq: major mobile recurrence}:
\begin{equation}\label{eq: recurrence substitued}
    e_q(P_{\lambda/\mu}) =\prod_{i=1}^{n-1}(1-q^i)\cdot  \sum_{\mu\to \nu}\frac{q^{|\lambda/\nu_1|}}{\prod_{v\in T_{\nu}} (1-q^{h(v)})} \cdot H_{\lambda/\nu^1}(q)\cdot H_{\lambda/\nu^2}(q)
\end{equation}

Note that $\lambda_1'-1 + c(u) + p_1 = |\lambda/\nu_1|$ when $\omega$ is a reversed Schur labeling. By \eqref{eq: Chevalley qanalog}, we can show the sum on the RHS of \eqref{eq: recurrence substitued} equals $(1-q^n)\cdot H_{\lambda/\mu}(q)$, completing the proof.
\end{proof}
 \begin{remark}
 We can generalize the labeling of the mobile poset by allowing non-natural labeling on the $d$-complete posets. In such case, we we would have non-trivial values for $q^{\maj(T_i)}$ in our final formula. 
 \end{remark}
 
\subsection{Proof of Lemma~\ref{lemma: major mobile recurrence}}\label{sec: proof of eq maj lemma}

To prove Lemma~\ref{lemma: major mobile recurrence}, we need the following lemmas.

Let $(P_{\lambda/\mu},\omega; U):= \{\sigma \in \mathcal{L}(P_{\lambda/\mu},\omega)\,|\, \sigma \mbox{ ends with } u \in U\}$ and 
\[
e_q^{\maj}(P_{\lambda/\mu},\omega,U):=\sum_{\sigma \in (P_{\lambda/\mu},\omega; U)} q^{\maj(\sigma)}.
\]
Unless specified otherwise, we denote this as $e_q^{\maj}(P_{\lambda/\mu},U)$.

\begin{lemma}
 For a labeled mobile poset $(P_{\lambda/\mu}(\mathbf{p}),\omega)$, where $\omega$ is a reversed Schur labeling, let $\mu\to\nu$ be the removal of an inner corner $u$, and $P_{\lambda/\nu_1}$ and $P_{\lambda/\nu_2}$ be the two disconnected parts of $\lambda/\nu$.  Then we have
 
 \begin{equation}\label{eq: linear extension partition}
    e_q^{\maj}(P_{\lambda/\mu}; \{u\}) = q^{n-1}e_q^{\maj}(P_{\lambda/\nu}; [\lambda/\nu_1]) + e_q^{\maj}(P_{\lambda/\nu}; [\lambda/\nu_2]).
\end{equation}
\end{lemma}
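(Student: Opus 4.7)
The plan is to reduce the left-hand side to a sum over $\mathcal{L}(P_{\lambda/\nu})$ by a natural bijection, and then to partition that sum according to whether a descent is created at the penultimate position. Since $u$ is an inner corner of $[\lambda/\mu]$ it is maximal in $P_{\lambda/\mu}$, and $P_{\lambda/\mu}\setminus\{u\}=P_{\lambda/\nu}$ as posets. Hence the map $\sigma = \sigma_1\cdots\sigma_{n-1}\,\omega(u) \mapsto \sigma' := \sigma_1\cdots\sigma_{n-1}$ is a bijection from $\mathcal{L}(P_{\lambda/\mu};\{u\})$ onto $\mathcal{L}(P_{\lambda/\nu})$, with inverse given by appending $\omega(u)$, which is valid precisely because $u$ is maximal in $P_{\lambda/\mu}$.

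Next I would compare major indices across this bijection. The descent sets of $\sigma$ and $\sigma'$ agree in positions $1,\ldots,n-2$; only position $n-1$ is newly exposed, and it is a descent of $\sigma$ precisely when $\omega(\sigma_{n-1}) > \omega(u)$. Therefore
\[
\maj(\sigma) \;=\; \maj(\sigma') + (n-1)\cdot\mathbf{1}\bigl[\omega(\sigma_{n-1}) > \omega(u)\bigr].
\]
The reversed Schur labeling on $[\lambda/\mu]$ now enters decisively: labels strictly decrease along the border strip as one moves from its bottom-left extremity toward its top-right extremity, so any cell belonging to the bottom-left disconnected piece $[\lambda/\nu_1]$ carries a label greater than $\omega(u)$, while any cell of the top-right piece $[\lambda/\nu_2]$ carries a label smaller than $\omega(u)$.

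Partitioning $\mathcal{L}(P_{\lambda/\nu})$ according to whether $\sigma_{n-1}$ lies in $[\lambda/\nu_1]$ or in $[\lambda/\nu_2]$ and summing, the first case contributes an extra $q^{n-1}$ (because the indicator evaluates to $1$) and the second contributes trivially, producing exactly the two summands on the right-hand side of the stated identity.

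The main delicate point is the role played by the hanging $d$-complete posets $T_\nu$ that were attached to $u$: after removing $u$ their maximal elements become maximal in $P_{\lambda/\nu}$, and so $\sigma_{n-1}$ could a priori be one of them while lying in neither $[\lambda/\nu_1]$ nor $[\lambda/\nu_2]$. I would address this by using the labeling convention in force: because $\omega$ is natural on each component of $T_\nu$ and its labels lie on a fixed side of $\omega(u)$, those linear extensions can be absorbed into whichever of the two terms is consistent with the descent contribution. Equivalently, one can first verify the identity in the case $T_\nu = \emptyset$ by the argument above, and then restore $T_\nu$ as an extra disjoint summand via Corollary~\ref{cor: qanalogue disjoint maj} applied to the decomposition $P_{\lambda/\nu}=P_{\lambda/\nu^1}+P_{\lambda/\nu^2}+T_\nu$.
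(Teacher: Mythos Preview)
Your core argument --- the bijection $\sigma \mapsto \sigma'$ deleting the terminal letter $\omega(u)$, the computation $\maj(\sigma) = \maj(\sigma') + (n-1)\cdot\mathbf{1}[\omega(\sigma_{n-1}) > \omega(u)]$, and the split according to which side of $u$ the element $\sigma_{n-1}$ lies on --- is precisely the paper's proof. The paper's argument is nothing more than your first three paragraphs, stated more tersely.

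The $T_\nu$ issue you flag in your last paragraph is also not addressed in the paper: it simply asserts ``$\sigma_{n-1}$ is either an element in $Q_{\lambda/\nu_1}$ or $Q_{\lambda/\nu_2}$'' and moves on. So you are in fact being more careful than the paper here. That said, neither of your proposed fixes is fully worked out. The ``absorb into one side'' approach needs a determination of which side of $\omega(u)$ the labels on $T_\nu$ fall, and the paper's labeling convention for the major index does not pin this down; and the reduction to $T_\nu=\emptyset$ followed by Corollary~\ref{cor: qanalogue disjoint maj} does not apply as stated, because $T_\nu$ is \emph{not} a disjoint summand of $P_{\lambda/\mu}$ (it hangs from $u$), so the fixed-endpoint quantity $e_q^{\maj}(P_{\lambda/\mu};\{u\})$ cannot be factored that way. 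The cleanest reading is that the sets $[\lambda/\nu_1]$ and $[\lambda/\nu_2]$ in the statement are meant to partition all maximal elements of $P_{\lambda/\nu}$, with the roots of $T_\nu$ assigned to whichever side their labels place them relative to $\omega(u)$; under that reading your descent argument already covers every case.
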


\begin{proof}
The linear extension on the left is $\sigma \in \mathcal{L}(P_{\lambda/\mu})$ where $\sigma = \sigma_1\dots \sigma_{n-1}\omega(u)$, we have $\sigma_1\dots\sigma_{n-1}\in \mathcal{L}(P_{\lambda/\nu})$ and $\sigma_n = u$. Then $\sigma_{n-1}$ is either an element in $Q_{\lambda/\nu_1}$ or $Q_{\lambda/\nu_2}$. If $\sigma_{n-1} \in Q_{\lambda/\nu_1}$, since $\omega$ is a reversed Schur labeling, $\omega(\sigma_{n-1}) > \omega(\sigma_n)$, so $\maj(\sigma) = \maj(\sigma_1\dots \sigma_{n-1}) + n-1$. If $\sigma_{n-1} \in Q_{\lambda/\nu_2}$, then $\omega(\sigma_{n-1})< \omega(\sigma_n)$, so $\maj(\sigma) =\maj(\sigma_1\dots\sigma_{n-1})$. 
\end{proof}

Now we are ready to prove Lemma~\ref{lemma: major mobile recurrence}.

\begin{proof}[Proof of Lemma~\ref{lemma: major mobile recurrence}]
By Corollary~\ref{cor: qanalogue disjoint maj} and a standard recurrence for $q$-binomial coefficients we have 

\begin{align}
    e_q^{\maj}(P_{\lambda/\nu}) & = \left(\qbin{n-2}{|\lambda/\nu_1|}{q} + q^{|\lambda/\nu_1|}\qbin{n-2}{|\lambda/\nu_2|-1}{q} \right) e_q^{\maj}(P_{\lambda/\nu_1}) e_q^{\maj}(P_{\lambda/\nu_2}) \nonumber\\
    &= \qbin{n-2}{|\lambda/\nu_2|}{q}e_q^{\maj}(P_{\lambda/\nu_1}) e_q^{\maj}(P_{\lambda/\nu_2}) + q^{|\lambda/\nu_1|}\qbin{n-2}{|\lambda/\nu_2|-1}{q}e_q^{\maj}(P_{\lambda/\nu_1}) e_q^{\maj}(P_{\lambda/\nu_2}).
\end{align}

Then the two parts of the sum can be interpreted in the following ways:

\begin{proposition}\label{prop: begins with nu1}
\begin{equation}\label{eq: begins with nu1}
     e_q^{\maj}(P_{\lambda/\nu}; [\lambda/\nu_1]) = \qbin{n-2}{|\lambda/\nu_1|-1,|\lambda/\nu_2|}{q}e_q^{\maj}(P_{\lambda/\nu_1}) e_q^{\maj}(P_{\lambda/\nu_2}),
\end{equation}

and

\begin{equation}\label{eq: begins with nu2}
e_q^{\maj}(P_{\lambda/\nu}; [\lambda/\nu_2]) = q^{|\lambda/\nu_1|}\qbin{n-2}{|\lambda/\nu_2|-1}{q}e_q^{\maj}(P_{\lambda/\nu_1}) e_q^{\maj}(P_{\lambda/\nu_2}).
\end{equation}
\end{proposition}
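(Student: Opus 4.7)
The plan is to establish \eqref{eq: begins with nu1} directly by applying Corollary~\ref{cor: e_q identity from G product} to the decomposition of $P_{\lambda/\nu}$ and summing over candidate last letters, and then to deduce \eqref{eq: begins with nu2} by comparing with the unrestricted identity in Corollary~\ref{cor: qanalogue disjoint maj} via the $q$-Pascal recursion.

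The key structural observation for \eqref{eq: begins with nu1} is that every maximal element of the mobile poset $P_{\lambda/\nu_1}$ is a border-strip cell in $[\lambda/\nu_1]$, because the attached $d$-complete pieces hang strictly below their anchor cells. Consequently, the linear extensions of $P_{\lambda/\nu_1}$ are partitioned by their last letter $s \in [\lambda/\nu_1]$, giving $e_q^{\maj}(P_{\lambda/\nu_1}) = \sum_{s} e_q^{\maj}(P_{\lambda/\nu_1};s)$. Under the reversed Schur labeling, every label of $[\lambda/\nu_1]$ strictly exceeds the label of the removed corner $u$, which in turn exceeds every label of $[\lambda/\nu_2]$; combined with the natural labeling on the attached $d$-completes (which forces each such label to be smaller than that of its anchor), this guarantees $\omega(s) > \omega(t)$ for all $t \in P_{\lambda/\nu_2}$. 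Corollary~\ref{cor: e_q identity from G product} therefore yields, for each maximal $s$,
\[
e_q^{\maj}(P_{\lambda/\nu};\,s) \;=\; \qbin{n-2}{|\lambda/\nu_1|-1,\,|\lambda/\nu_2|}{q}\, e_q^{\maj}(P_{\lambda/\nu_1};\,s)\, e_q^{\maj}(P_{\lambda/\nu_2}),
\]
and summing over $s$ collapses the first factor, proving \eqref{eq: begins with nu1}.

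For \eqref{eq: begins with nu2}, the observation is that every linear extension of $P_{\lambda/\nu}$ ends at a maximal element, and every such element lies in $[\lambda/\nu_1]\cup[\lambda/\nu_2]$, so
\[
e_q^{\maj}(P_{\lambda/\nu}) \;=\; e_q^{\maj}(P_{\lambda/\nu};\,[\lambda/\nu_1]) \,+\, e_q^{\maj}(P_{\lambda/\nu};\,[\lambda/\nu_2]).
\]
Evaluating the left-hand side by Corollary~\ref{cor: qanalogue disjoint maj}, substituting \eqref{eq: begins with nu1} for the first term on the right, and invoking the $q$-Pascal identity $\qbin{n-1}{k}{q} = \qbin{n-2}{k-1}{q} + q^{k}\qbin{n-2}{k}{q}$ with $k = |\lambda/\nu_1|$ reduces the difference to the claimed factor $q^{|\lambda/\nu_1|}\qbin{n-2}{|\lambda/\nu_2|-1}{q}$ (using the symmetry $\qbin{n-2}{|\lambda/\nu_1|}{q} = \qbin{n-2}{|\lambda/\nu_2|-1}{q}$ since $|\lambda/\nu_1|+|\lambda/\nu_2| = n-1$).

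The main obstacle I anticipate is the careful bookkeeping of the labeling conventions when verifying the hypothesis of Corollary~\ref{cor: e_q identity from G product}: one must check that the combined reversed Schur / natural labeling does indeed enforce $\omega(s) > \omega(t)$ for every $t$ in the entire mobile poset $P_{\lambda/\nu_2}$, including those in its attached $d$-complete posets, rather than only for the border strip cells. A related subtlety concerns the $d$-complete subposets $T_\nu$ previously hanging from the removed corner $u$: for the two-term decomposition used throughout Lemma~\ref{lemma: major mobile recurrence} to be consistent, these must be absorbed implicitly into $P_{\lambda/\nu_1}$ or $P_{\lambda/\nu_2}$ in the statement, so that $|\lambda/\nu_1|+|\lambda/\nu_2| = n-1$ and the $q$-Pascal manipulation above is valid.
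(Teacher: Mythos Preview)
Your proposal is correct and follows essentially the same route as the paper: for \eqref{eq: begins with nu1} you sum Corollary~\ref{cor: e_q identity from G product} over the maximal elements of $P_{\lambda/\nu_1}$, and for \eqref{eq: begins with nu2} you subtract this from the unrestricted identity in Corollary~\ref{cor: qanalogue disjoint maj} via the $q$-Pascal recursion. The paper's proof is terser on exactly the two points you flag in your last paragraph (it simply asserts that the hypothesis of Lemma~\ref{lemma: GH product restricted} holds, and it silently treats $P_{\lambda/\nu}$ as a two-piece disjoint union in this proposition even though elsewhere it writes $P_{\lambda/\nu}=P_{\lambda/\nu^1}+P_{\lambda/\nu^2}+T_\nu$), so your explicit discussion of these conventions is a genuine improvement in rigor rather than a deviation in strategy.
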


\begin{proof}
Let $\{x_1,\dots x_k\}$ be the maximal elements of $P_{\lambda/\nu_1}$. Note that
\begin{equation}\label{eq: sum of xi to nu ending with nu1}
    e_q^{\maj}(P_{\lambda/\nu};[\lambda/\nu_1]) = \sum_{i=1}^k e_q^{\maj}(P_{\lambda/\nu};x_i),
\end{equation}
and
\begin{equation}\label{eq: sum of xi to nu1}
     e_q^{\maj}(P_{\lambda/\nu_1}) = \sum_{i=1}^k e_q^{\maj}(P_{\lambda/\nu_1};x_i).
\end{equation}

We know that $P_{\lambda/\nu_1}$ and $P_{\lambda/\nu_2}$ satisfy the condition in Lemma~\ref{lemma: GH product restricted}. Then by Corollary~\ref{cor: e_q identity from G product}, we have

\begin{equation}\label{eq: emaj P;xi interpretation}
   e_q^{\maj}(P_{\lambda/\nu};x_i) = \qbin{n-2}{|\lambda/\nu_1|-1,|\lambda/\nu_2|}{q}e_q^{\maj}(P_{\lambda/\nu_1};x_i)\cdot e_q^{\maj}(P_{\lambda/\nu_2}).
\end{equation}

Applying \eqref{eq: emaj P;xi interpretation} and \eqref{eq: sum of xi to nu1} to \eqref{eq: sum of xi to nu ending with nu1}, we have the desired result.

For the second equation, we know that $e_q^{\maj}(P_{\lambda/\nu}) = e_q^{\maj}(P_{\lambda/\nu};[\lambda/\nu_1]) + e_q^{\maj}(P_{\lambda/\nu};[\lambda/\nu_2])$, so subtracting \eqref{eq: begins with nu1} from $e_q^{\maj}(P_{\lambda/\nu})$, we have the desired result as well.
\end{proof}

We now apply \eqref{eq: begins with nu1} and \eqref{eq: begins with nu2} to \eqref{eq: linear extension partition} to get the following equation. Let $p_1 = |P_{\lambda/\nu_1}|$ and $p_2 = |P_{\lambda/\nu_2}|$.

\begin{align*}
    e_q^{\maj}(P_{\lambda/\mu};\{u\}) &= q^{n-1}\qbin{n-2}{p_2}{q}e_q^{\maj}(P_{\lambda/\nu_1})\cdot  e_q^{\maj}(P_{\lambda/\nu_2}) + q^{p_1}\qbin{n-2}{p_2-1}{q}e_q^{\maj}(P_{\lambda/\nu_1})\cdot  e_q^{\maj}(P_{\lambda/\nu_2})
\end{align*}
    
After simplifying everything, we get 
\[ e_q^{\maj}(P_{\lambda/\mu};\{u\}) = q^{p_1}\cdot e_q^{\maj}(P_{\lambda/\nu}).\]

Such equation is true for all inner corners $u$ of $\mu\to\nu$, which completes the proof of Lemma~\ref{lemma: major mobile recurrence}.
\end{proof}

\subsection{Proof of Lemma~\ref{lemma: Chevalley qanalog maj}}\label{sec: proof of Chevalley lemma}

We first evaluate  $x_i=q^{\lambda_i - i + 1 - \sum_{a< i} p_{a,b}}$ and $y_j = q^{j-\lambda'_j - \sum_{b\geq j} p_{a,b}}$ in the Pieri--Chevalley formula \eqref{eq: Chevalley}. The LHS of this formula becomes
\begin{equation}\label{eq: substitute x,y in F} 
    \left.F_{\lambda/\mu} (\mathbf{x},\mathbf{y}) \right|_{\substack{x_i = q^{\lambda_i - i + 1 - \sum_{a< i} p_{a,b}},\\y_j = q^{j-\lambda'_j - \sum_{b\geq j} p_{a,b}}}} = (-1)^n \cdot \sum_{\substack{\gamma: A \to B,\\ \gamma\subset \lambda}}\prod_{(i,j)\in \gamma}\frac{q^{\lambda_j'-j + \sum_{b\geq j} p_{a,b}}}{1-q^{h'(i,j)}}
\end{equation}

By  \cite[Proposition 4.7]{MPP1} we have
\begin{align}
    \sum_{(i,j) \in [\lambda]\setminus D}\Big( (\lambda_j'-j) + \sum_{b\geq j} p_{a,b} \Big) &= \sum_{(i,j) \in [\lambda]\setminus D}\Big( (\lambda_j'-i) + \sum_{b\geq j} p_{a,b} \Big) - \sum_{(i,j)\in [\lambda]\setminus[\mu]} c(i,j), \nonumber
\end{align}
where in the last equality, we use  
 \cite[Proposition 7.16]{MPP1} to obtain,
\begin{equation}
    \label{eq: broken diagonal maj}
  \sum_{(i,j) \in [\lambda]\setminus D}\Big( (\lambda_j'-j) + \sum_{b\geq j} p_{a,b} \Big)  =  w'(\Br(D)) + \Bigg(\sum_{(i,j) \in [\lambda]\setminus D} \sum_{b\geq j }p_{a,b} - \sum_{\substack{a\geq i, b\geq j\\(i,j)\in \Br(D)}}p_{a,b}\Bigg) - \sum_{(i,j)\in [\lambda]\setminus[\mu]} c(i,j)
\end{equation}
where $w'(\Br(D)) = \sum_{(i,j)\in \Br(D)}h'(i,j)$, and the subtraction in the second sum is the $d$-complete posets that are included in the new hook-lengths of the broken diagonals. We denote the quantity in parenthesis on the RHS as $\widehat{p}_D$.
\[\widehat{p}_D := \sum_{(i,j) \in [\lambda]\setminus D} \sum_{b\geq j }p_{a,b} - \sum_{\substack{a\geq i, b\geq j\\(i,j)\in \Br(D)}}p_{a,b}\]
We claim that $\widehat{p}_D$ is invariant among $D$.

\begin{lemma}
The quantity $\widehat{p}_D$ is invariant among all $D\in \mathcal{E}(\lambda/\mu)$.
\end{lemma}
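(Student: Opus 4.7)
The plan is to proceed by induction on excited moves. Since any two excited diagrams $D, D' \in \mathcal{E}(\lambda/\mu)$ are joined by a sequence of elementary excited moves, it suffices to verify $\widehat{p}_{D'} = \widehat{p}_D$ whenever $D \to D'$ is a single move at an active cell $u = (i, j)$. This move removes $(i, j)$ from $D$ and inserts $(i+1, j+1)$, and by the paper's convention replaces $(i+1, j+1)$ with $(i+1, j)$ in $\Br(D)$.

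To analyze the change, I would introduce the shorthands $P_b := \sum_{a} p_{a, b}$ for the column sum and $S(r, s) := \sum_{a \geq r,\, b \geq s} p_{a, b}$ for the south-east quadrant sum. The first sum of $\widehat{p}_D$ (outer index $(i, j) \in [\lambda]\setminus D$, inner sum over $a$ free and $b \geq j$) changes by $\sum_{b \geq j} P_b - \sum_{b \geq j+1} P_b = P_j$, reflecting the gain of $(i,j)$ and loss of $(i+1, j+1)$. The second sum (outer index $(i, j) \in \Br(D)$, inner $a \geq i, b \geq j$) changes by $S(i+1, j) - S(i+1, j+1) = \sum_{a \geq i+1} p_{a, j}$, reflecting the broken-diagonal swap. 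Subtracting gives
\begin{equation*}
\widehat{p}_{D'} - \widehat{p}_D \;=\; P_j - \sum_{a \geq i+1} p_{a, j} \;=\; \sum_{a \leq i} p_{a, j}.
\end{equation*}

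It remains to show this residual vanishes. Because $p_{a, j}$ is supported on cells of the border strip $[\lambda/\mu]$, the quantity $\sum_{a \leq i} p_{a, j}$ is zero precisely when no cell of $[\lambda/\mu]$ lies in column $j$ weakly above row $i$. This in turn reduces to the structural claim that the active cell $(i, j) \in D$ itself belongs to $[\mu]$, since $(i, j) \in [\mu]$ forces $\mu'_j \geq i$, which then forces $(a, j) \in [\mu]$ for every $a \leq i$.

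The principal obstacle is verifying the structural claim that every active cell of every $D \in \mathcal{E}(\lambda/\mu)$ lies in $[\mu]$. I would prove it by a secondary induction along the excited-move sequence producing $D$ from $[\mu]$, combining the active-cell conditions $(i+1, j), (i, j+1), (i+1, j+1) \notin D$ with the no-$2\times 2$ property of the border strip $[\lambda/\mu]$ (which excited moves transfer to $[\lambda]\setminus D$) to rule out that a cell which has already been displaced from its $[\mu]$-position can ever satisfy the active-cell conditions. Tracking the row-$i$ arm of $D$ along anti-diagonals should show that such a displaced cell $(i, j)$ always has $(i, j+1) \in D$, contradicting activity.
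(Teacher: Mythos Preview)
Your proposal is correct and follows essentially the same route as the paper's proof: induct on excited moves, compute the change in the two sums defining $\widehat{p}_D$ under a single move at an active cell, and reduce to showing that the residual $\sum_{a\le i} p_{a,j}$ vanishes. The paper dispatches this last point in one sentence (``for $\lambda/\mu$ a border strip, there cannot be any $d$-complete posets hanging above an active cell in the same column''), whereas you spell out more carefully that it suffices to show the active cell lies in $[\mu]$ and sketch a secondary induction for that; but the underlying structural fact being invoked is the same.
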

\begin{proof}
 We prove by induction on excited moves $\beta: D \to D'$. Denote $a_i = (s_i,t_i)$ as the active cell and $b = (s +1, t+1),b' = (s+1, t)$ be the old and new broken diagonal of an excited move $\beta:D \to D'$, Then for each excited move $\beta$, $[\lambda]\setminus D' = ([\lambda]\setminus D) \setminus \{b\} \cup \{a\}$. Also, $\Br(D') = \Br(D) \setminus \{b\} \cup \{b'\}$. Thus

\begin{figure}[h]
    \centering
    \includegraphics[height=1cm]{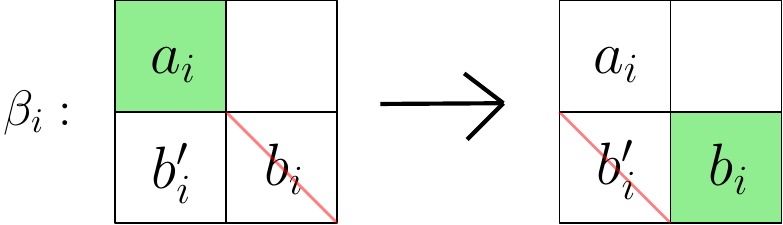}
    \caption{For each excited move $\beta_i$, we place the old broken diagonal $b_i$ with the new broken diagonal $b_i'$.}
    \label{fig: Excited move 2}
\end{figure}

\begin{align*}
    \widehat{p}_{D'} &= \left(\sum_{(i,j)\in[\lambda]\setminus D} \sum_{b\geq j} p_{a,b} - \sum_{b \geq t_i+1} p_{a,b} + \sum_{b \geq t_i} p_{a,b}\right) - \left( \sum_{\substack{a\geq i, b\geq j\\ (i,j)\in \Br(D)}} p_{a,b} - \sum_{\substack{a\geq s_i+1,\\ b\geq t_i+1}}p_{a,b} + \sum_{\substack{a\geq s_i+1, \\b\geq t_i}}p_{a,b} \right)\\
     &= \left(\sum_{(i,j)\in[\lambda]\setminus D} \sum_{b\geq j} p_{a,b} + \sum_{b = t_i} p_{a,b} \right) - \Bigg( \sum_{\substack{a\geq i, b\geq j\\ (i,j)\in \Br(D)}} p_{a,b} + \sum_{\substack{a\geq s_i+1,\\b = t_i}} p_{a,b}\Bigg)\\
     &= \widehat{p}_D + \sum_{b=t_i} p_{a,b} - \sum_{\substack{a\geq s_i+1,\\b = t_i}} p_{a,b}
\end{align*}

For $\lambda/\mu$ border strip, there cannot be any $d$-complete posets hanging above an active cell in the same column, so the difference on the RHS of the equation above is zero and so $\widehat{P}_{D'}=\widehat{P}_D$. 
\end{proof}

We then let $\widehat{p}_{D} = \widehat{p}$ for all $D\in \mathcal{E}(\lambda/\mu)$. Putting $\widehat{p}$ and $c(i,j)$ outside of the sum, we can rewrite \eqref{eq: substitute x,y in F} as:

\begin{equation} \label{eq: F to H}
      \left.F_{\lambda/\mu} (\mathbf{x}|\mathbf{y}) \right|_{\substack{x_i = q^{\lambda_i - i + 1 - \sum_{a< i} p_{a,b}},\\ y_j = q^{j-\lambda'_j - \sum_{b\geq j} p_{a,b}}}} = (-1)^n \cdot q^{\widehat{p}_{\mu}- \con(\lambda/\mu)} \prod_{v\in \mathbf{p}}(1-q^{h(v)})\cdot H_{\lambda/\mu},
\end{equation}
where $\con(\lambda/\mu):=\sum_{(i,j)\in [\lambda/\mu]}c(i,j)$. Next we see what happens to the RHS of Pieri--Chevalley formula \eqref{eq: Chevalley} when we evaluate at $x_i = q^{\lambda_i - i + 1 - \sum_{a< i} p_{a,b}}$ and $ y_j = q^{j-\lambda'_j - \sum_{b\geq j} p_{a,b}}$, where the sizes $p_{a,b}$ appearing in the sums are for the $d$-complete posets in $P_{\lambda/\mu}(\mathbf{p})$.  The linear factor on the RHS of Pieri--Chevalley formula becomes
\begin{equation} \label{eq: evaluation of x1 - y1}
\left.\frac{1}{x_1-y_1}\right|_{\substack{x_i = q^{\lambda_i - i + 1 - \sum_{a< i} p_{a,b}},\\ y_j = q^{j-\lambda'_j - \sum_{b\geq j} p_{a,b}}}} \,=\, \frac{(-1)^{n-1} }{q^{\lambda_1 - \sum_{a< 1}p_{a,b}}-q^{1-\lambda'_1 - \sum_{b\geq1}p_{a,b}}} =\frac{(-1)^{n}q^{\lambda_1'-1+\sum_{(a,b)\in [\lambda/\mu]}p_{a,b}}}{(1-q^n)}.
\end{equation}

Given an inner corner removed $\mu \to \nu$, denote $\mathbf{p}_1$ and $\mathbf{p}_2$ be the set of $d$-complete poset hanging on $P_{\lambda/\nu^1}$ and $P_{\lambda/\nu^2}$ respectively. Then for the shapes $\lambda/\nu^k$ where $k=1,2$  we have

\begin{equation} \label{eq: F to H nu1 nu2}
      \left.F_{\lambda/\nu^k} (\mathbf{x}|\mathbf{y}) \right|_{\substack{x_i = q^{\lambda_i - i + 1 - \sum_{a< i} p_{a,b}},\\ y_j = q^{j-\lambda'_j - \sum_{b\geq j} p_{a,b}}}} = (-1)^{|\lambda/\nu^k|} \cdot q^{\widehat{p}_{\nu^k}- \con(\lambda/\nu^k)} \prod_{v\in \mathbf{p}_k}(1-q^{h(v)})\cdot H_{\lambda/\nu^k},
\end{equation}

\[\frac{\prod_{v\in \mathbf{p}_1}(1-q^{h(v)}) \cdot \prod_{v\in \mathbf{p}_2}(1-q^{h(v)})}{\prod_{v\in \mathbf{p}}(1-q^{h(v)})} = \frac{1}{\prod_{v\in T_\nu} (1-q^{h(v)})}\]

Thus, by  \eqref{eq: F to H}, \eqref{eq: evaluation of x1 - y1}, and \eqref{eq: F to H nu1 nu2} the Pieri--Chevalley formula \eqref{eq: Chevalley} evaluated at such $x_i$ and $y_j$ becomes,
\begin{equation}
    q^{\widehat{p}_{\mu}- \con(\lambda/\mu)} H_{\lambda/\mu} =\frac{q^{\lambda_1'-1+\sum_{(a,b)\in [\lambda/\mu]}p_{a,b}}}{(1-q^n)}\sum_{\mu\to\nu} \frac{q^{\widehat{p}_{\nu_1} + \widehat{p}_{\nu_2} -  \con(\lambda/\nu_1) - \con(\lambda/\nu_2)}}{\prod_{v\in T_\nu} (1-q^{h(v)})}  H_{\lambda/\nu_1} H_{\lambda/\nu_2}.
\end{equation}
Note that for each inner corner $u:\mu\to \nu$, we have  $\con(\lambda/\mu)-\con(\lambda/\nu_1)-\con(\lambda/\nu_2) = c(u)$. Thus the previous equation becomes

\begin{equation*}\label{eq: substitute xi, yi calcuation}
    (1-q^n)H_{\lambda/\mu} =\sum_{\mu\to\nu} \frac{q^{\lambda'_1-1+ c(u)+\sum_{(a,b)\in [\lambda/\mu]}p_{a,b}+ \widehat{p}_{\nu_1}+ \widehat{p}_{\nu_2}-\widehat{p}_\mu}}{\prod_{v\in T_\nu} (1-q^{h(v)})} H_{\lambda/\nu_1}H_{\lambda/\nu_2}.
\end{equation*}

\begin{lemma} 
For the equation defined above,
\[
\sum_{(a,b)\in [\lambda/\mu]}p_{a,b} + \widehat{p}_{\nu_1} + \widehat{p}_{\nu_2} - \widehat{p}_{\mu} = |\mathbf{p}_1|.
\]

\end{lemma}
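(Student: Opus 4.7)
The plan is to leverage the invariance of $\widehat{p}_D$ over $D \in \mathcal{E}(\lambda/\mu)$ (just established) to evaluate each of $\widehat{p}_\mu$, $\widehat{p}_{\nu^1}$, $\widehat{p}_{\nu^2}$ at the initial excited diagrams $D = [\mu], [\nu^1], [\nu^2]$, and then verify the identity by direct bookkeeping. First, observe that the $d$-complete posets hanging off $[\lambda/\mu]$ partition as $\mathbf{p} = \mathbf{p}_1 \sqcup \mathbf{p}_2 \sqcup T_\nu$ according to whether they attach at cells of $[\lambda/\nu^1]$, of $[\lambda/\nu^2]$, or at the removed inner corner $u$. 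Therefore $\sum_{(a,b) \in [\lambda/\mu]} p_{a,b} = |\mathbf{p}_1| + |\mathbf{p}_2| + |T_\nu|$, and the identity reduces to
\[
\widehat{p}_\mu - \widehat{p}_{\nu^1} - \widehat{p}_{\nu^2} \;=\; |\mathbf{p}_2| + |T_\nu|.
\]

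Next, write $\widehat{p}_D = f(D) - g(D)$ where $f$ and $g$ are the first and second double sums in the definition, and group the contributions by each attached poset $p_{a_0, b_0}$. For a fixed $(a_0,b_0)$, its coefficient in $\widehat{p}_\mu$ is
\[
N_\mu(a_0, b_0) \;=\; \#\{(i,j) \in [\lambda/\mu] : j \leq b_0\} \;-\; \#\{(i,j) \in \Br(\mu) : i \leq a_0,\; j \leq b_0\},
\]
with analogous quantities $N_{\nu^k}(a_0,b_0)$ for the two pieces. The target identity then decomposes into three pointwise claims: (i) $N_\mu = N_{\nu^1}$ on $\mathbf{p}_1$; (ii) $N_\mu - N_{\nu^2} = 1$ on $\mathbf{p}_2$; and (iii) $N_\mu(u) = 1$ at the removed corner. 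To verify these, I would combine the decomposition $[\lambda/\mu] = [\lambda/\nu^1] \sqcup \{u\} \sqcup [\lambda/\nu^2]$ with the fact that removing $u = (i_u, j_u)$ shifts exactly one cell of the broken-diagonal set (the one on the diagonal $\mu_{i_u} - i_u$), while the other broken diagonals split cleanly between $\Br(\nu^1)$ and $\Br(\nu^2)$. The geometric observation that $[\lambda/\nu^1]$ lies in the region $\{j \leq j_u\}$ and $[\lambda/\nu^2]$ lies in $\{j \geq j_u\}$ then renders each of (i)--(iii) a short case check.

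The main obstacle is the careful combinatorial bookkeeping of the shifted broken diagonal under inner-corner removal, together with the asymmetric role of the constraint on $a$ in $g$ (present) versus $f$ (absent). This asymmetry is precisely what forces the outcome to read $|\mathbf{p}_2| + |T_\nu|$ rather than something symmetric in $\mathbf{p}_1$ and $\mathbf{p}_2$: cells of $\mathbf{p}_2$ (sitting to the right of $u$) are ``seen'' by every cell of $[\lambda/\nu^1]$ through the first sum of $\widehat{p}_\mu$, but not at all through $\widehat{p}_{\nu^2}$, producing the $+1$ per cell of $\mathbf{p}_2$. This matches exactly the exponent $|P_{\lambda/\nu^1}|$ appearing in the recurrence of Lemma~\ref{lemma: major mobile recurrence}, which is driven by the reversed Schur labeling favoring the bottom-left piece.
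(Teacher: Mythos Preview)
Your overall strategy---evaluate each $\widehat{p}$ at the initial excited diagram and then split the contribution cell-by-cell according to the attachment point $(a_0,b_0)$---is exactly the paper's approach, just reorganized. The paper computes the two aggregate differences
\[
S_1 = f([\nu^1])+f([\nu^2])-f([\mu]), \qquad S_2 = g([\nu^1])+g([\nu^2])-g([\mu]),
\]
and simplifies each using $[\lambda/\mu]=[\lambda/\nu^1]\sqcup\{u\}\sqcup[\lambda/\nu^2]$ and $\Br([\mu])=\Br([\nu^1])\sqcup\{u_0\}\sqcup\Br([\nu^2])$; your $N_\mu-N_{\nu^1}-N_{\nu^2}$ is the same computation viewed one attached poset at a time.

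However, your pointwise claims (ii) and (iii) are not correct as written. The quantities $\widehat{p}_{\nu^1},\widehat{p}_{\nu^2}$ arise from substituting the \emph{same} $x_i,y_j$ (built from \emph{all} of $\mathbf{p}$) into $F_{\lambda/\nu^k}$; consequently every $(a_0,b_0)\in\mathbf{p}$ contributes to \emph{both} $\widehat{p}_{\nu^1}$ and $\widehat{p}_{\nu^2}$, not only to the one on whose piece it hangs. For $(a_0,b_0)\in\mathbf{p}_2$ one has $N_{\nu^1}(a_0,b_0)=|[\lambda/\nu^1]|\neq 0$ (all cells of the bottom-left piece satisfy $j\le j_u<b_0$, and none of its broken diagonals satisfy $i\le a_0\le i_u$), so $N_\mu-N_{\nu^2}=|[\lambda/\nu^1]|+1$, not $1$. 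Similarly $N_\mu(u)=|[\lambda/\nu^1]|+1$, not $1$. Your heuristic in the last paragraph reflects this confusion: the cells of $[\lambda/\nu^1]$ that ``see'' a poset in $\mathbf{p}_2$ contribute $|[\lambda/\nu^1]|$ to $N_\mu$, not $+1$, and it is precisely the (nonzero) term $N_{\nu^1}$ that cancels them.

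The fix is immediate: replace (i)--(iii) by the single claim $N_\mu-N_{\nu^1}-N_{\nu^2}=0,1,1$ on $\mathbf{p}_1,\mathbf{p}_2,T_\nu$ respectively. Each case is then the short check you describe, using the two disjoint-union decompositions above together with the geometric facts $[\lambda/\nu^1]\subset\{i\ge i_u+1,\ j\le j_u\}$ and $[\lambda/\nu^2]\subset\{i\le i_u,\ j\ge j_u+1\}$.
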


\begin{proof}
Consider $\sum_{b\geq 1}p_{a,b} + \widehat{p}_{\nu_1} + \widehat{p}_{\nu_2} - \widehat{p}_{\mu}$. We know that $\widehat{p}$ is invariant among the excited diagrams, so without loss of generality, assume $D=[\mu]$.

We have $\Br([\nu_1]) \cup \Br([\nu_2]) \cup \{u_0\}= \Br([\lambda/\mu]),$ where $u_0 = (u_1+1,u_2)$ is a broken diagonal of $[\mu]$ below $u = (u_1,u_2)$. Denote $p_i$ as the size of the $d$-complete posets hanging on $\lambda/\nu_i$. Then, 

\begin{figure}[h]
    \centering
    \includegraphics[height = 3cm]{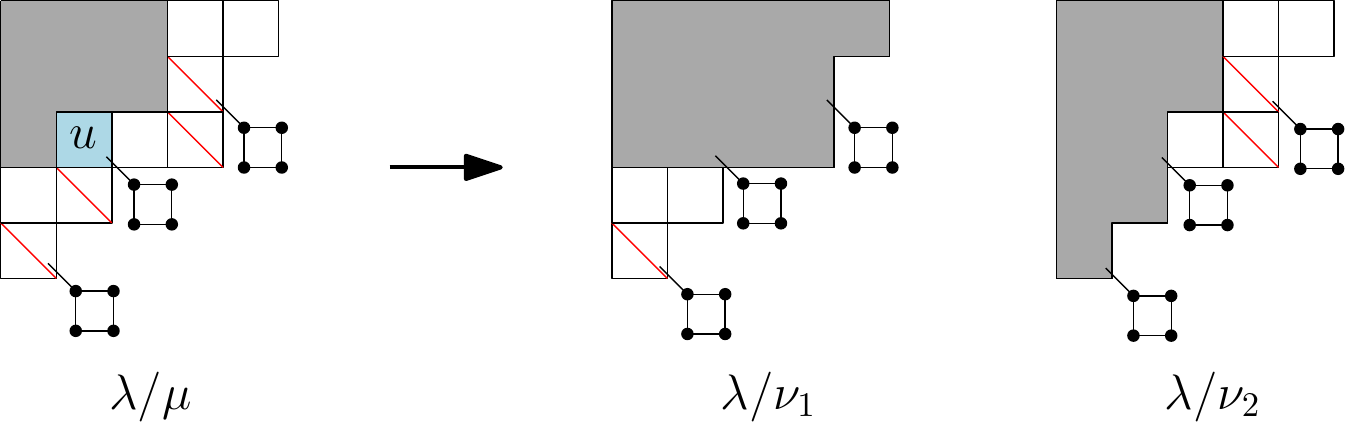}
    \caption{$[\lambda/\mu] = [\lambda/\nu_1] + [\lambda/\nu_2] + u$. Note that in $\lambda/\nu_1$, we are missing a broken diagonal underneath $u$.}
    \label{fig:mu to nu1 and nu2}
\end{figure}

\begin{multline}
\label{eq: calculation of p hat}
    \sum_{b\geq 1}p_{a,b} + \widehat{p}_{\nu_1} + \widehat{p}_{\nu_2} - \widehat{p}_{\mu} = \sum_{b\geq 1}p_{a,b} + \Bigg(\sum_{(i,j) \in [\lambda/\nu_1]} \sum_{b\geq j }p_{a,b} + \sum_{(i,j) \in [\lambda/\nu_2]} \sum_{b\geq j }p_{a,b} - \sum_{(i,j) \in [\lambda/\mu]} \sum_{b\geq j }p_{a,b}\Bigg)\\
    \hspace{10pt} -\Bigg( \sum_{\substack{a\geq i, b\geq j\\(i,j)\in \Br([\nu_1])}}p_{a,b} + \sum_{\substack{a\geq i, b\geq j\\(i,j)\in \Br([\nu_2])}}p_{a,b} - \sum_{\substack{a\geq i, b\geq j\\(i,j)\in \Br([\mu])}}p_{a,b}\Bigg)
\end{multline}
Let $S_1$ and $S_2$ be the sums in parenthesis on the RHS above. Since $\lambda/\nu_1$ and $\lambda/\nu_2$ do not contain the inner corner $u$ (see Figure~\ref{fig:mu to nu1 and nu2}) then $S_1$ and $S_2$ simplify to 
\begin{equation}\label{eq: p hat calculation}
  S_1 = \sum_{b\geq u_2} p_{a,b}, \qquad S_2 = \sum_{a\geq u_1+1, b\geq u_2} p_{a,b}.  
\end{equation}

Thus Equation~\eqref{eq: calculation of p hat} becomes
\begin{align*}
 \sum_{b\geq 1}p_{a,b} + \widehat{p}_{\nu_1} + \widehat{p}_{\nu_2} - \widehat{p}_{\mu}   &= \sum_{b\geq 1}p_{a,b} - \Bigg( \sum_{b\geq u_2} p_{a,b} - \sum_{a\geq u_1+1, b\geq u_2} p_{a,b}\Bigg).
    \label{eq: widehat p calculation}
\end{align*}
Note that the equation in the parenthesis counts the number of $d$-complete posets hanging on and to the the right of $u$. These are exactly the $d$-complete posets on $P_{\lambda/\nu^1}$.
\end{proof}

Finally, note that $\lambda_1'-1 + c(u) + |\mathbf{p}_1| = |P_{\lambda/\nu_1}|$. Then we can simplify \eqref{eq: substitute xi, yi calcuation} to obtain the desired formula.

\section{Application: bounds for the number of linear extensions}\label{sec: application}

In this section, we provide a short proof of Corollary~\ref{thm: mobile nhlf} along with an example. We also discuss an application of the formula to the bounds of generalized Euler numbers.

\subsection{The case of $q=1$}

\begin{proof}[proof of Corollary~\ref{thm: mobile nhlf}]
Taking $q=1$ for Theorem~\ref{thm: major mobile nhlf}, we get the proof of Corollary~\ref{thm: mobile nhlf}. One can also prove Corollary~\ref{thm: mobile nhlf} directly by evaluating the multivariate formula $F_{\lambda/\mu}$ at $x_i = \lambda_i -i+1 - \sum_{a<i} p_{a,b}$ and $y_j = j-\lambda_j' -\sum_{b\geq j}p_{a,b}$
\end{proof}

 We give the example of the theorem below.

\begin{example}
Consider the mobile poset $P_{2221/11}$ from Figure~\ref{fig:mobile example} (b). Then by Corollary~\ref{thm: mobile nhlf} we have
\begin{align}
    e(P) = \frac{13!}{2^\cdot 4^2}\left(\frac{1}{5\cdot 6\cdot 7^2} + \frac{1}{5\cdot 6 \cdot 7^2 \cdot 12} + \frac{1}{5 \cdot 7^2 \cdot 12 \cdot 13} \right) = 33000.
\end{align}

\end{example}

\begin{figure}[t]
\centering
\begin{subfigure}[normal]{0.4\textwidth}
\centering
\includegraphics[height=2.5cm]{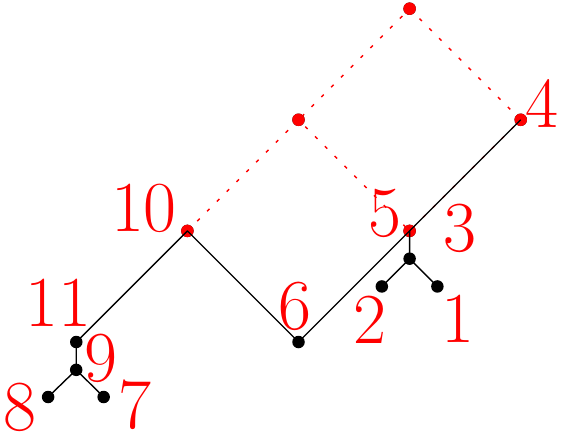}
\caption{}
\end{subfigure}
\quad
\begin{subfigure}[normal]{0.4\textwidth}
\centering
\includegraphics[height=2.5cm]{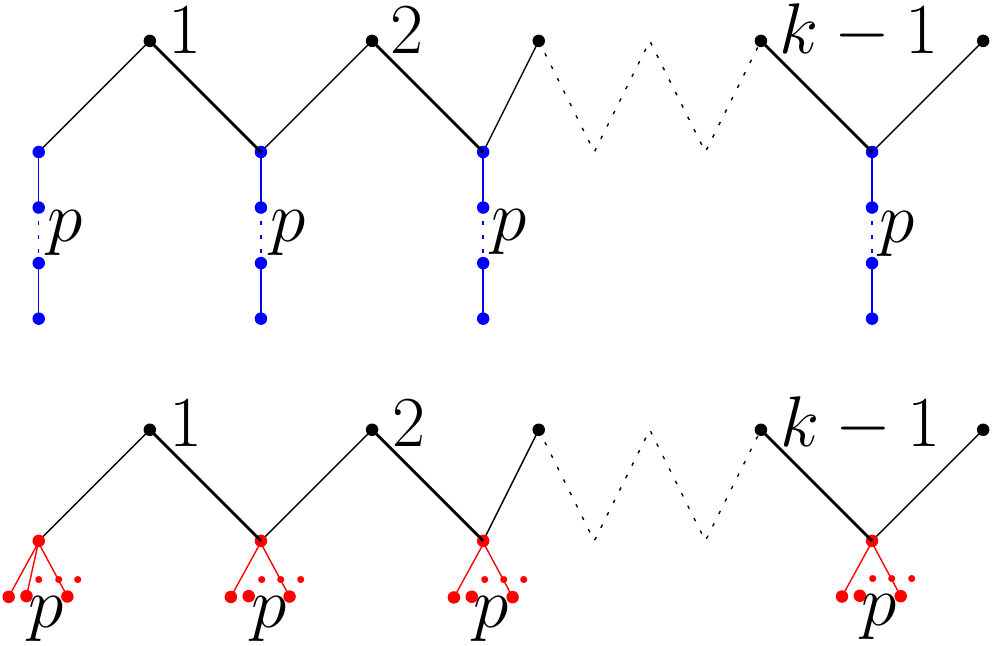}
\caption{}
\end{subfigure}

    \caption{(a) the \textcolor{red}{$\omega_\inv$ labeled} mobile tree poset, (b) illustration of the posets $\mathcal{C}_p(k)$ (top) and $\mathcal{A}_p(k)$ (bottom).}
    \label{fig:mobile example2}
\end{figure}

\subsection{Bounds to generalizations of Euler numbers}

As an application to Corollary~\ref{thm: mobile nhlf} gives bounds to $e(P_{\lambda/\mu}(\mathbf{p}))$ just as in \cite{MPPasymptotics}.

\begin{corollary}\label{cor: boundary}
For any mobile poset $e(P_{\lambda/\mu}(\mathbf{p}))$ of size $n$, 
\[\frac{n!}{H(\mathbf{p})\prod_{u\in [\lambda/\mu]}h'(u)} \leq e(P_{\lambda/\mu}(\mathbf{p})) \leq |\E(\lambda/\mu)|\cdot \frac{n!}{H(\mathbf{p})\prod_{u\in [\lambda/\mu]}h'(u)}\]
where $[\lambda/\mu]$ is the border strip of the mobile poset.
\end{corollary}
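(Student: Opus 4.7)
The plan is to derive both bounds directly from the Naruse-type formula in Corollary~\ref{thm: mobile nhlf}, which reads
\[
e(P_{\lambda/\mu}(\mathbf{p})) \;=\; \frac{n!}{H(\mathbf{p})} \sum_{D \in \E(\lambda/\mu)} \prod_{u \in [\lambda]\setminus D} \frac{1}{h'(u)}.
\]
Since the sum has $|\E(\lambda/\mu)|$ strictly positive terms, both inequalities will follow once we identify the term of maximum value and argue that every other term is at most this one.

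For the lower bound, I would single out the canonical excited diagram $D_0 := [\mu]$, which always lies in $\E(\lambda/\mu)$ and satisfies $[\lambda] \setminus D_0 = [\lambda/\mu]$. The corresponding summand is exactly $\prod_{u \in [\lambda/\mu]} 1/h'(u)$, so retaining only this term in the sum already gives the desired lower bound.

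For the upper bound, the key step is the monotonicity claim
\[
\prod_{u \in [\lambda] \setminus D} \frac{1}{h'(u)} \;\leq\; \prod_{u \in [\lambda/\mu]} \frac{1}{h'(u)} \qquad \text{for every } D \in \E(\lambda/\mu),
\]
after which summing over $D$ produces $|\E(\lambda/\mu)|$ copies of the bound and finishes the argument. To prove the claim I would induct on the number of excited moves needed to produce $D$ from $D_0$. For a single move $\beta: D \to D'$ carrying an active cell $(i,j) \in D$ to $(i+1,j+1)$, we have $[\lambda]\setminus D' = \bigl([\lambda]\setminus D\bigr) \setminus \{(i+1,j+1)\} \cup \{(i,j)\}$, so the ratio of the two products equals $h'(i+1,j+1)/h'(i,j)$. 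This ratio is at most $1$ because $h'$ is weakly decreasing as the cell moves to the lower-right: the classical part $\lambda_i - i + \lambda'_j - j + 1$ is weakly decreasing (both $\lambda_i$ and $\lambda'_j$ are non-increasing in their indices, so the drop in $\lambda_i - i$ and $\lambda'_j - j$ combined is non-positive), and the correction $\sum_{a \geq i, b \geq j} p_{a,b}$ from the attached $d$-complete posets is weakly decreasing because $\{a \geq i+1, b \geq j+1\} \subseteq \{a \geq i, b \geq j\}$.

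There is no substantial obstacle here: once Corollary~\ref{thm: mobile nhlf} is in hand, the result reduces to the monotonicity of $h'$ along excited moves, which is immediate from the definition in \eqref{eq: def modified hook-length}. This mirrors the analogous argument for ordinary skew shapes in \cite{MPPasymptotics}; the only new ingredient is checking that the $d$-complete correction term in $h'$ respects the same monotonicity as the classical hook-length.
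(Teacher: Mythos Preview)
Your proposal is correct and follows essentially the same approach as the paper: both take the lower bound from the single term $D_0=[\mu]$ in Corollary~\ref{thm: mobile nhlf} and the upper bound from the monotonicity of $\prod_{u\in[\lambda]\setminus D}h'(u)$ under excited moves. You spell out in slightly more detail why $h'(i+1,j+1)\le h'(i,j)$ (checking both the classical hook part and the $d$-complete correction term separately), but the argument is the same.
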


\begin{proof}
For any skew shape $\lambda/\mu$, we have $[\mu] \in \mathcal{E}(\lambda/\mu)$, so the lower bound is given by Corollary~\ref{thm: mobile nhlf}. For the upper bound, note that under the excited move, the product $\prod_{v\in [\lambda/\mu]}h'(u)$ increases. Then this product is minimal when the excited diagram is $[\mu]$ and the upper bound follows from Corollary~\ref{thm: mobile nhlf}.
\end{proof}

For more detail about asymptotic of linear extensions of skew shaped tableaux, see \cite{MPPasymptotics}.

One application of the formula is that it provides bounds to generalizations of \emph{Euler numbers} defined in \cite{GMM}. The authors give two generalizations of Euler number using two different families of posets, up-down posets with $k-1$ downs and chains (or anti-chains) of size $p$  hanging on every minimal element,denoted as $\mathcal{C}_p(k)$ and $\mathcal{A}_p(k)$ (see Figure~\ref{fig:mobile example} (b)). See \cite[\href{https://oeis.org/A332471}{A332471}]{OEIS} and \cite[\href{https://oeis.org/A332568}{A332568}]{OEIS} for examples of these sequences.

\begin{corollary}\label{cor: zigzag bound}

\[\frac{(2k+kp)!}{ (p+1)!^k(2p+3)^{k-1}(p+2)} \leq e(\mathcal{C}_p(k)) \leq 
\Cat(k)\cdot \frac{(2k+ kp)!}{ (p+1)!^k(2p+3)^{k-1}(p+2)}\]

\[\frac{(2k+ kp)!}{(p+1)^k(2p+3)^{k-1}(p+2)} \leq e(\mathcal{A}_p(k)) \leq 
\Cat(k)\cdot \frac{(2k + kp)!}{(p+1)^k(2p+3)^{k-1}(p+2)}\]
where $\mathcal{Z}$ is the up-down border strip with $k-1$ many down steps. 
\end{corollary}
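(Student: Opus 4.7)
The strategy is to apply Corollary~\ref{cor: boundary} directly to the two families $\mathcal{C}_p(k)$ and $\mathcal{A}_p(k)$, both of which are mobile posets with the \emph{same} underlying zigzag border strip $\mathcal{Z}$ of size $2k$ (with $k$ minima and $k$ maxima alternating), differing only in whether the $d$-complete posets hanging on the $k$ minimal cells are chains or antichains of size $p$. The corollary then reduces the task to computing four quantities: $n!$, $H(\mathbf{p})$, $|\mathcal{E}(\mathcal{Z})|$, and $\prod_{u\in [\mathcal{Z}]} h'(u)$.

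Three of these are quick. The size is $n = 2k + kp$. The product of hook-lengths of the hanging posets is $H(\mathbf{p}) = (p!)^k$ for $\mathcal{C}_p(k)$ (a chain of length $p$ has hook-lengths $1,2,\dots,p$), while $H(\mathbf{p}) = 1$ for $\mathcal{A}_p(k)$ (every element of an antichain has hook-length $1$). The number of excited diagrams $|\mathcal{E}(\mathcal{Z})|=\Cat(k)$ follows from the standard bijection in \cite[Sec. 3]{MPP2} between excited diagrams of a border strip and lattice paths inside the bounding rectangle; for the zigzag these are precisely the Dyck paths of semilength $k$.

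The crux is computing $\prod_{u \in [\mathcal{Z}]} h'(u)$ using $h'(i,j) = \lambda_i - i + \lambda'_j - j + 1 + \sum_{a \geq i,\, b \geq j} p_{a,b}$. Fixing any concrete embedding of $\mathcal{Z}$ as a skew shape $\lambda/\mu$, I would verify by direct inspection of the three possible local configurations that: (i) each of the $k$ valley cells (minima of $\mathcal{Z}$) has $h'=p+1$, the unit hook of a corner cell plus the $p$ elements of the attached poset; (ii) each of the $k-1$ interior peaks is covered by two valleys, so its hook in $[\lambda]$ contributes $3$ border-strip cells (itself plus both adjacent valleys) while the sum $\sum p_{a,b}$ over its southeast quadrant captures both attached posets, giving $h' = 3 + 2p = 2p+3$; (iii) the unique boundary peak at the end of the zigzag is covered by only one valley, giving $h'=2+p=p+2$. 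Collecting factors yields $\prod_{u}h'(u) = (p+1)^k(2p+3)^{k-1}(p+2)$.

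Substituting into Corollary~\ref{cor: boundary}, the denominator $H(\mathbf{p})\prod_u h'(u)$ becomes $(p!)^k(p+1)^k(2p+3)^{k-1}(p+2) = (p+1)!^k(2p+3)^{k-1}(p+2)$ for $\mathcal{C}_p(k)$ and $(p+1)^k(2p+3)^{k-1}(p+2)$ for $\mathcal{A}_p(k)$, matching the stated inequalities (with multiplicative factor $\Cat(k)$ on the upper side). The main obstacle is the modified-hook bookkeeping in step three: one must carefully distinguish the interior peaks from the lone boundary peak, since the asymmetric boundary contribution $p+2$ (rather than $2p+3$) is what produces the separate factor in the denominators.
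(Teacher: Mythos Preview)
Your proposal is correct and follows essentially the same approach as the paper: the paper's proof is a one-liner invoking Corollary~\ref{cor: boundary}, a ``routine calculation of hooks,'' and the Catalan-number count of excited diagrams from \cite[Corollary~8.1]{MPP2}, and you have simply spelled out that routine calculation in detail. One minor wording slip: in your items (ii) and (iii) the interior and boundary peaks \emph{cover} (rather than are covered by) the adjacent valleys, but the hook computation you carry out is correct.
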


\begin{proof}
The result follows from Corollary~\ref{cor: boundary}, a routine calculations of hooks, and the fact that the excited diagrams of up-down posets are given by the Catalan numbers \cite[Corollary 8.1]{MPP2}

\end{proof}

\section{An Inversion index $q$-analogue}\label{sec: q-analogue inversion}
In this section we give an example and the proof of Theorem~\ref{thm: inv q-analogue}. Unless specified otherwise, $(P_{\lambda/\mu}(\mathbf{p}),\omega)$ is a labeled mobile tree poset. 

\subsection{Labeling of the poset for the case of inversion index}

The mobile tree poset must satisfy a very specific labeling for the case of the inversion statistic. One of the reasons why is because of the condition stated in Proposition~\ref{prop: qanalogue disjoint poset inv}. Another reason is so that the labeling needs to satisfy Lemma~\ref{lem: inv recurrence}. To satisfy both conditions, we must label the poset in the following way: let $\{u_1,\dots, u_k\}$ be the list of inner corners of $P_{\lambda/\mu}$ from $(1,\lambda_1)$ to $(\lambda'_1,1)$, and $u_0 = (1,\lambda_1)$ and $u_{k+1} = (\lambda_1',1)$. Partition the mobile posets into $P_1,\dots,P_{k+1}$ such that for each $P_i$, it contains all elements $(s,t) \in P_{\lambda/\mu}$ such that $u_{i}< t \leq u_{i-1}$ and all the elements of rooted trees hanging on such $(s,t)$. If $u_{k+1} = (\lambda_1',1)$, then $P_{k+1} = u_{k+1}$. Starting from $P_1$, we label each $P_i$ such that all the hanging rooted trees are naturally labeled and the elements in the border strip have reversed Schur labeling. See Figure~\ref{fig:inversion labeling} for an example. We denote such labeling as $\omega_{\inv}$.
\begin{figure}[h]
    \centering
    \includegraphics[height= 3cm]{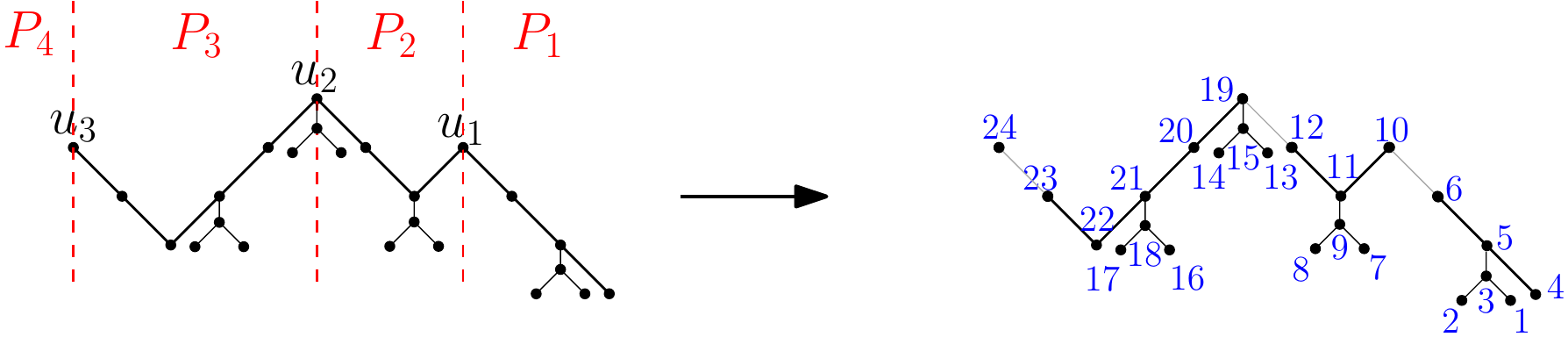}
    \caption{A mobile tree poset with labeling $\omega_{\inv}$. On the left we show the partitions of the poset into $P_1,\dots, P_4$. We label each $P_i$ so that the $d$-complete posets are naturally labeled and the elements in the border-strip have reversed Schur labeling.}
    \label{fig:inversion labeling}
\end{figure}

\begin{example}\label{example: inversion}
Consider labeled the mobile poset $(P_{2221/11},\omega_{\inv})$ from Figure~\ref{fig:mobile example2} (a). Then by Theorem~\ref{thm: inv nhlf} we have

\begin{align*}
    e_q^{\inv}(P) &= \frac{[11]!}{[1]^4[3]^2}\left(\frac{q^4}{[4][6][1][5][6]} + \frac{q^9}{[4][6][10][5][6]} + \frac{q^{14}}{[4][6] [10] [11] [6]} \right)\\
    & = q^{38} + 4 q^{37} + 9 q^{36} + 17 q^{35} +\cdots + 9 q^6 + 4q^5 + q^4.
\end{align*}
\end{example}

\begin{remark}
Note that Theorem~\ref{thm: inv nhlf} is only for mobile trees, where the $d$-complete posets are restricted to rooted trees. This is because there is no known hook-length formula for $e_q^\inv(P)$ when $P$ is a general $d$-complete poset.
\end{remark}

We need the following recursion for the inversion index $q$-analogue.

\begin{lemma}\label{lem: inv recurrence}
\begin{equation}
    e_q^{\inv}(P_{\lambda/\mu},\omega) = \sum_{\mu\to\nu} q^{n-\omega(u)} e_q^{\inv}(P_{\lambda/\nu}, \omega_\nu),
\end{equation}
where $\omega$ is a reversed Schur labeling and $\omega(u)$ is the label of the inner corner $u$ from $\mu\to\nu$.
\end{lemma}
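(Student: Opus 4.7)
The plan is to prove the recurrence by conditioning on which maximal element of $P_{\lambda/\mu}$ is placed last in a linear extension. First, I observe that in the mobile tree poset $P_{\lambda/\mu}(\mathbf{p})$ the maximal elements are exactly the cells sitting at the inner corners of $\lambda/\mu$: by construction, every rooted tree hanging off the border strip has its root covered by a border-strip cell, so no tree element is maximal in $P_{\lambda/\mu}$. Consequently, every linear extension $\sigma\in\mathcal{L}(P_{\lambda/\mu},\omega)$ takes the form $\sigma = \sigma_1\sigma_2\cdots\sigma_{n-1}\,\omega(u)$ for some inner corner $u$, and deleting the final letter gives a bijection between the linear extensions of $(P_{\lambda/\mu},\omega)$ that end with $\omega(u)$ and the set $\mathcal{L}(P_{\lambda/\nu},\omega_\nu)$, where $\mu\to\nu$ is the removal that deletes the cell $u$ and $\omega_\nu$ is the restriction of $\omega$ to $P_{\lambda/\nu}$.

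Next I track the inversion statistic across this bijection. Appending the label $\omega(u)$ to the end of the word $\sigma_1\cdots\sigma_{n-1}$ creates exactly one new inversion for each position $i$ with $\sigma_i>\omega(u)$. Since $\{\sigma_1,\dots,\sigma_{n-1}\}=\{1,\dots,n\}\setminus\{\omega(u)\}$, the number of such positions equals $n-\omega(u)$, so
\[
\inv(\sigma) \;=\; \inv(\sigma_1\cdots\sigma_{n-1}) + (n-\omega(u)).
\]
Summing $q^{\inv(\sigma)}$ over linear extensions of $(P_{\lambda/\mu},\omega)$ ending at $\omega(u)$, and then over all inner corners $u$, yields the claimed identity
\[
e_q^{\inv}(P_{\lambda/\mu},\omega) \;=\; \sum_{\mu\to\nu} q^{n-\omega(u)}\,e_q^{\inv}(P_{\lambda/\nu},\omega_\nu).
\]

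The bijection-plus-statistic step is clean; the point that needs care is the interaction with the specific labeling $\omega_{\inv}$ fixed in Section~\ref{sec: q-analogue inversion}. The block-by-block construction of $\omega_{\inv}$ is tailored precisely so that for any choice of inner corner $u$ the restriction $\omega_\nu$ still meets the two properties required downstream: it is naturally labeled on each $d$-complete piece, and the labels on each border-strip block lie below those of the next. I expect this compatibility check to be the main bookkeeping obstacle, because it must hold uniformly in $u$ so that the recurrence can be iterated and combined with Proposition~\ref{prop: qanalogue disjoint poset inv} when proving Theorem~\ref{thm: inv nhlf}. Once this is verified, the inversion recurrence follows in the same way for every inner corner $u$, finishing the proof.
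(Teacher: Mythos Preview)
Your argument is correct and follows essentially the same approach as the paper: condition on the inner corner $u$ appearing as the last letter of $\sigma$, note that appending $\omega(u)$ contributes exactly $n-\omega(u)$ new inversions, and sum over $\mu\to\nu$. Your treatment is in fact more carefully stated than the paper's, which speaks of the inner corner being ``followed by'' the remaining linear extension and of ``inserting the new element in the beginning''---wording that is at odds with the convention $\sigma=\omega\circ f^{-1}$ (under which $\sigma_n$, not $\sigma_1$, is the label of a maximal element); the intended argument is exactly what you wrote. Your final paragraph about checking that the restriction $\omega_\nu$ retains the block structure of $\omega_{\inv}$ is not needed for the lemma itself (the identity holds for any labeling $\omega$), but you are right that it is the relevant bookkeeping when the lemma is iterated inside the proof of Theorem~\ref{thm: inv nhlf}.
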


 We also need the following Pieri--Chevalley formula for the inversion index. Denote the the RHS of \eqref{thm: inv q-analogue} as $\widetilde{H}_{\lambda/\mu}(q)$:
 
 \[\widetilde{H}_{\lambda/\mu}(q):= \prod_{v\in \mathbf{p}}\frac{1}{1-q^{h(v)}}\sum_{D\in \mathcal{E}(\lambda/\mu)} q^{w(D)+p_D} \prod_{u\in [\lambda]\setminus D} \frac{1}{1-q^{h'(u)}}, \]

\begin{lemma}\label{lem: Chevalley q analog inv lem}

\begin{equation}\label{eq: Chevalley q analogue inv}
    (1-q^n)\cdot \widetilde{H}_{\lambda/\mu} =   \sum_{\mu\to\nu} \frac{q^{q^{n-\omega_{\inv}(u)}}}{\prod_{v\in T_\nu} (1-q^{h(v)})}  \cdot \widetilde{H}_{\lambda/\nu^1}\cdot \widetilde{H}_{\lambda/\nu^2},
\end{equation}
where $\omega_{\inv}(u)$ is the label of $u$, the inner corner from $\mu\to\nu$
\end{lemma}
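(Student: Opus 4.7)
The plan is to follow the strategy of Lemma~\ref{lemma: Chevalley qanalog maj}: specialize the variables $(\mathbf{x},\mathbf{y})$ in the Pieri--Chevalley formula \eqref{eq: Chevalley} so that the multivariate identity $F_{\lambda/\mu}(\mathbf{x},\mathbf{y}) = \frac{1}{x_1-y_1}\sum_{\mu\to\nu}F_{\lambda/\nu^1}F_{\lambda/\nu^2}$ becomes \eqref{eq: Chevalley q analogue inv} after the substitution. The challenge is to choose a substitution adapted to $\widetilde{H}_{\lambda/\mu}$, whose weight $q^{w(D)+p_D}$ separates the border-strip hook contribution from the $d$-complete-poset correction, rather than combining them into $q^{w'(D)}$ as in the major-index case.

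First I would try a substitution of the form $x_i = q^{\lambda_i - i + 1 - A_i}$ and $y_j = q^{j - \lambda'_j - B_j}$, with correction terms $A_i, B_j$ depending on the sizes $p_{a,b}$ and tuned so that $x_i - y_j = \pm q^{\ast}(1-q^{h'(i,j)})$ (so that $h'$ still appears in the denominators), while the collected $q$-exponents over cells of $[\lambda]\setminus D$ yield $w(D)+p_D$ rather than $w'(D)$. A natural first attempt is to drop the row correction (setting $A_i=0$) and retain only a column correction $B_j = \sum_{b\geq j}\sum_a p_{a,b}$; this moves weight from broken diagonals onto generic cells of $[\lambda]\setminus D$ and so produces exactly the splitting $w(D)+p_D$.

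Once the substitution is fixed, I would mimic the calculations around \eqref{eq: F to H} and \eqref{eq: evaluation of x1 - y1}, replacing the broken-diagonal identity \eqref{eq: broken diagonal maj} by its analogue that produces $w(D)+p_D$ instead of $w'(D)$. The key intermediate step is an invariance statement akin to the invariance of $\widehat{p}_D$: one checks that the new prefactor collected from the substituted exponents is uniform in $D\in\mathcal{E}(\lambda/\mu)$, so that it factors out of the sum. Under an excited move $\beta:D\to D'$ sending a broken diagonal $(s+1,t+1)$ to $(s+1,t)$, the shift in $w(D)+p_D$ should cancel the shift in the sum of exponents over $[\lambda]\setminus D$, which is the content of the analogous invariance lemma in the border-strip setting.

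Finally, combining the two sides of the substituted formula yields an identity of the form $(1-q^n)\widetilde{H}_{\lambda/\mu} = \sum_{\mu\to\nu}\frac{q^{E_u}}{\prod_{v\in T_\nu}(1-q^{h(v)})}\widetilde{H}_{\lambda/\nu^1}\widetilde{H}_{\lambda/\nu^2}$ for some explicit exponent $E_u$. The main obstacle is the identification $E_u = n - \omega_\inv(u)$. For this I would invoke the explicit construction of $\omega_\inv$ through the partition $P_1,\ldots,P_{k+1}$ described in Section~\ref{sec: q-analogue inversion}: when $u = u_i$ is the removed inner corner, $u$ is the last element labelled inside $P_i$, so $\omega_\inv(u) = |P_1|+\cdots+|P_i|$ and hence $n-\omega_\inv(u) = |P_{i+1}|+\cdots+|P_{k+1}|$. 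This quantity should match the exponent produced by the content $c(u)$, the column-truncated $d$-complete-poset sizes, and an inversion-case analogue of the identity $\sum_{b\geq 1}p_{a,b}+\widehat{p}_{\nu_1}+\widehat{p}_{\nu_2}-\widehat{p}_\mu = |\mathbf{p}_1|$ proved at the end of Section~\ref{sec: proof of Chevalley lemma}, via a calculation parallel to the closing step of that section and using that the labels in $\omega_\inv$ on $P_{i+1},\ldots,P_{k+1}$ are exactly the $n-\omega_\inv(u)$ largest labels.
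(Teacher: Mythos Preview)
Your overall plan---specialize the Pieri--Chevalley identity \eqref{eq: Chevalley} and then identify the resulting exponent with $n-\omega_{\inv}(u)$---is exactly what the paper does. The gap is in the choice of substitution.

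You propose dropping the row correction ($A_i=0$) while keeping $B_j=\sum_{b\geq j}\sum_a p_{a,b}$. With that choice the $q$-exponent difference in $x_i-y_j$ is $h(i,j)+B_j$, and $B_j$ counts \emph{all} hanging posets in columns $\geq j$, including those attached at rows $a<i$. This overshoots the correction $\sum_{a\geq i,\,b\geq j}p_{a,b}$ needed for $h'(i,j)$, so the denominators are not $(1-q^{h'(i,j)})$ whenever there is a hanging poset above row $i$. For instance in $\lambda=(2,2,1)$, $\mu=(1)$, the cell $(2,1)$ of the path $[\lambda/\mu]$ picks up $p_{1,2}$ under your substitution but not in $h'(2,1)$. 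So this substitution cannot produce $\widetilde{H}_{\lambda/\mu}$, and your stated constraint ``$x_i-y_j=\pm q^{\ast}(1-q^{h'(i,j)})$'' in fact forces $A_i$ back to $\sum_{a<i}p_{a,b}$.

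The paper's point is that no new substitution is needed: one uses the \emph{same} evaluation $x_i=q^{\lambda_i-i+1-\sum_{a<i}p_{a,b}}$, $y_j=q^{j-\lambda'_j-\sum_{b\geq j}p_{a,b}}$ as in the major-index proof and only changes the bookkeeping of the numerator exponent. Instead of packaging it as $w'(D)+\widehat{p}_D$, one writes $\sum_{(i,j)\in[\lambda]\setminus D}(\lambda'_j-i)=w(\Br(D))$ with the ordinary hooks, sets $\tilde p_{\lambda/\mu}:=\sum_{(i,j)\in[\lambda/\mu]}\sum_{b\geq j}p_{a,b}$, and observes
\[
\sum_{(i,j)\in[\lambda]\setminus D}\sum_{b\geq j}p_{a,b}\;-\;\tilde p_{\lambda/\mu}\;=\;\sum_{(i,j)\in[\mu]\setminus D}\sum_{b=j}p_{a,b}\;=\;p_D.
\]
Since $\tilde p_{\lambda/\mu}$ is summed over the fixed set $[\lambda/\mu]$, it is manifestly independent of $D$, so no separate invariance lemma of the $\widehat{p}_D$ type is required here. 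Substituting into both sides of \eqref{eq: Chevalley} then yields an exponent $\lambda'_1-1+c(u)+\sum_{b\geq 1}p_{a,b}-\sum_{b\geq u_2}p_{a,b}$, and a short direct computation (first for the bare border strip with reversed Schur labeling, then shifting by the hanging-poset sizes to pass to $\omega_{\inv}$) identifies this with $n-\omega_{\inv}(u)$. Your sketch of the final identification via the blocks $P_1,\ldots,P_{k+1}$ is along the same lines.
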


We provide the proof of the lemmas in Section~\ref{subsec: inv Chevalley proof}.

\begin{proof}
Recall that for a fixed tree mobile $P_{\lambda/\mu}$, a linear extension of $\sigma \in \mathcal{L}(P_{\lambda/\mu})$ consists of an  inner corner of $\lambda/\mu$ followed by a linear extension of the remaining poset of shape $\lambda/\nu$, where $\mu \to \nu$. Conversely, given a linear extension $\sigma' \in \mathcal{L}(P_{\lambda/\nu})$, by inserting the new element in the beginning we obtain a linear extension of $P_{\lambda/\mu}$. Note that $\inv(\sigma) = \inv(\sigma') + n - \omega$, where $n-\omega$ is the number of inversion caused by the inner corner. The result follows from this correspondence
\end{proof}

We are now ready to give the proof of Theorem~\ref{thm: inv nhlf}.

\begin{proof}[Proof of Theorem \ref{thm: inv q-analogue}]

Similarly as in the case of major index, we show that $e_q^{\inv}(P_{\lambda/\mu}(\mathbf{p})) ={\prod_{i=1}^n(1-q^i)}\cdot \widetilde{H}_{\lambda/\mu}(q)$ by induction on $|\lambda/\mu|$ using Lemma~\ref{lem: inv recurrence}. Recall $P_{\lambda/\nu}$ can be expressed as \eqref{eq: disconnected}. By induction and Theorem~\ref{thm: maj d-complete poset}, we have

\[\frac{e_q(P_{\lambda/\nu^j})}{[p_j]_q!} = \prod_{i=1}^{p_j}(1-q^i)\cdot \widetilde{H}_{\lambda/\nu^j} \cdot \frac{(1-q)^{p_j}}{\prod_{i=1}^{p_j}(1-q^i)} ={(1-q)^{p_j} } \cdot  \widetilde{H}_{\lambda/\nu^j},\] 

where $p_j = |P_{\lambda/\nu^j}|$. Also, for each $T_i \subset T_{\nu}$, 
\begin{equation}\label{eq: inv calculation}
    \frac{e_q^{\inv}(T_i)}{[t_i]_q!} = \frac{q^{\inv(T_i)}}{\prod_{v\in T_i}[h(v)]_q} =\frac{(1-q)^{t_i}}{\prod_{v\in T_i}(1-q^{h(v)})}.
\end{equation}

 Note that $T_i$ are naturally labeled, so $\inv(T_i)=0$. Using Proposition~\ref{prop: qanalogue disjoint poset inv} and \eqref{eq: inv calculation}, we have

\[ e_q(P_{\lambda/\nu}) =\frac{\prod_{i=1}^{n-1}(1-q^i)}{\prod_{v\in T_{\nu}} (1-q^{h(v)})}  \widetilde{H}_{\lambda/\nu^1}\cdot \widetilde{H}_{\lambda/\nu^2}. \]

By this equation and Lemma~\ref{lem: inv recurrence}. 
\begin{equation}\label{eq: recurrence substituted inv}
    e_q(P_{\lambda/\mu}) ={\prod_{i=1}^{n-1}(1-q^i)}\sum_{\mu\to \nu} \frac{ q^{n-\omega_{\inv}(u)}}{\prod_{v\in T_{\nu}} (1-q^{h(v)})}\cdot  \widetilde{H}_{\lambda/\nu^1}\cdot \widetilde{H}_{\lambda/\nu^2}.
\end{equation}

By \eqref{eq: Chevalley q analogue inv}, we can show the sum on the RHS of \eqref{eq: recurrence substituted inv} equals $(1-q^n)\cdot \widetilde{H}_{\lambda/\mu}$, completing the proof.
\end{proof}

\subsection{Proof of Lemma~\ref{lem: Chevalley q analog inv lem}}\label{subsec: inv Chevalley proof}

We first evaluate $F_{\lambda/\mu}(\mathbf{x},\mathbf{y})$ at $x_i=q^{\lambda_i - i + 1 - \sum_{a< i} p_{a,b}}$ and $y_j = q^{j-\lambda'_j - \sum_{b\geq j} p_{a,b}}$.
\begin{equation}\label{eq: substitute x,y F inv} 
    \left.F_{\lambda/\mu} (\mathbf{x},\mathbf{y}) \right.\mid_{\substack{x_i = q^{\lambda_i - i + 1 - \sum_{a< i} p_{a,b}},\\ y_j = q^{j-\lambda'_j - \sum_{b\geq j} p_{a,b}}}} = (-1)^n \sum_{\substack{\gamma: A \to B,\\ \gamma\subset \lambda}}\prod_{(i,j)\in \gamma}\frac{q^{\lambda_j'-j + \sum_{b\geq j} p_{a,b}}}{1-q^{h'(i,j)}}
\end{equation}

By  \cite[Prop 4.7]{MPP1} and \cite[Lemma 7.17]{MPP1}, we have
\begin{align}
    \sum_{(i,j) \in \lambda\setminus D}\Big( (\lambda_j'-j) + \sum_{b\geq j} p_{a,b} \Big) &= \sum_{(i,j) \in [\lambda]\setminus D}\Big( (\lambda_j'-i) + \sum_{b\geq j} p_{a,b} \Big) - \sum_{(i,j)\in [\lambda]\setminus[\mu]} c(i,j) \nonumber \\ 
    &=  w(\Br(D)) + \sum_{(i,j) \in [\lambda]\setminus D} \sum_{b\geq j }p_{a,b} - \sum_{(i,j)\in [\lambda]\setminus[\mu]} c(i,j) \label{eq: broken diagonal inv},
\end{align}

where $c(i,j) = j-i$ and $w(\Br(D)) = \sum_{(i,j)\in Br(D)} h(i,j)$. Note that unlike the case of major index, we do not include the size of the rooted trees in to $w(\Br(D))$ (see \eqref{eq: broken diagonal maj}) .

Denote $\Tilde{p}_{\lambda/\mu}:=\sum_{(i,j) \in [\lambda]/[\mu]} \sum_{b\geq j }p_{a,b}$. For each $D\in \mathcal{E}(\lambda/\mu)$, we have

\[\sum_{(i,j)\in[\lambda]\setminus D}\sum_{b\geq j}p_{a,b} - \Tilde{p}_{\lambda/\mu} = \sum_{(i,j)\in [\mu]/D}\sum_{b=j}p_{a,b} = p_D.\]

Then $\Tilde{p}_{\lambda/\mu}$ and $c(i,j)$ from \eqref{eq: broken diagonal inv} do not depend on $D$, so we can take them outside of the sum to rewrite \eqref{eq: substitute x,y F inv} as:
\begin{equation}\label{eq: xi yi substituted inversion}
      \left.F_{\lambda/\mu} (\mathbf{x}|\mathbf{y}) \right.\mid_{\substack{x_i = q^{\lambda'_1 - i + 1 - \sum_{a< i} p_{a,b}},\\ y_j = q^{j-\lambda'_j - \sum_{b\geq j} p_{a,b}}}} = (-1)^n \cdot q^{\Tilde{p}_{\lambda/\mu}-\con(\lambda/\mu)}\prod_{v\in \mathbf{p}}(1-q^{h(v)})\cdot \widetilde{H}_{\lambda/\mu}
\end{equation}

Now as done for the case of major index in Section~\ref{sec: q-analogue major}, we evaluate the Pieri--Chevalley formula at such $x_i$ and $y_j$. Then applying \eqref{eq: xi yi substituted inversion} to \eqref{eq: Chevalley}, and simplifying everything as we did in the case of major index, we have

\begin{equation}\label{eq: recurrence in H inv}
    (1-q^n)\widetilde{H}_{\lambda/\mu} = \sum_{\mu\to\nu} \frac{q^{\lambda_1-1+c(u)+ \sum_{b\geq 1}p_{a,b}+\Tilde{p}_{\lambda/\nu_1}+ \Tilde{p}_{\lambda/\nu_2} - \Tilde{p}_{\lambda/\mu} }}{\prod_{v\in T_{\nu}} (1-q^{h(v)})}\widetilde{H}_{\lambda/\nu_1}\widetilde{H}_{\lambda/\nu_2}.
\end{equation}

Note that from \eqref{eq: p hat calculation}, this is equivalent to

\[\sum_{b\geq 1}p_{a,b} +\Tilde{p}_{\lambda/\nu_1} + \Tilde{p}_{\lambda/\nu_2} - \Tilde{p}_{\lambda/\mu} = \sum_{b\geq 1}p_{a,b} - \sum_{b\geq u_2}p_{a,b},\]

where $u_2$ is the column of the the inner corner $u=(u_1,u_2)$. It is left to show the following lemma to complete the proof.

\begin{lemma}
Let $(P_{\lambda/\mu}(\mathbf{p}),\omega_{\inv})$ be a mobile tree poset of size $n$ with a labeling $\omega_{\inv}$ and $u$ be the inner corner for $\mu\to\nu$. Then we have,
\[n-\omega_{\inv}(u) = \lambda'_1 -1 + c(u) +  \sum_{b\geq 1}p_{a,b} - \sum_{b\geq u_2}p_{a,b}.\]
\end{lemma}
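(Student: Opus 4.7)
The plan is to evaluate $\omega_{\inv}(u)$ directly by unpacking the construction of the labeling described in Section~\ref{sec: q-analogue inversion} and matching terms with the claimed identity. Let $P_j$ be the block of the partition $P_1,\dots,P_{k+1}$ that contains $u$; since $\omega_{\inv}$ assigns labels to the blocks in the order $P_1,P_2,\ldots,P_{k+1}$, we can write
\begin{equation*}
    \omega_{\inv}(u) \,=\, \bigl(|P_1| + \cdots + |P_{j-1}|\bigr) + \mathrm{pos}(u,P_j),
\end{equation*}
where $\mathrm{pos}(u,P_j)$ denotes the local position of $u$ among the labels assigned inside $P_j$. I would compute these two contributions separately.

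For the prefix sum, the key input is a purely geometric count: the union $P_1\cup\cdots\cup P_{j-1}$ consists of every element whose column is strictly greater than $u_2$, including both border-strip cells and the trees hanging on them. The border-strip cells in columns $\leq u_2$ are exactly those lying on the sub-path of the border strip from $u=(u_1,u_2)$ down to the bottom-left outer corner $(\lambda'_1,1)$. That sub-path consists of $\lambda'_1-u_1$ down-steps and $u_2-1$ left-steps, so together with $u$ itself it comprises $\lambda'_1 + c(u)$ cells, using that $(u_1-1,u_2)\in\mu$ rules out any border-strip cell above $u$ in column $u_2$. Consequently
\begin{equation*}
    |P_1|+\cdots+|P_{j-1}| \,=\, \bigl(|\lambda/\mu| - \lambda'_1 - c(u)\bigr) + \sum_{b > u_2}p_{a,b}.
\end{equation*}

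For the local position, the plan is to use that inside $P_j$ the labeling sweeps through the columns of $P_j$ from right to left, and within each column first labels the hanging trees (in any natural order) and then the border-strip cells from top to bottom, which is the reversed Schur order along a single column. Since $u$ lies in the rightmost column of $P_j$, namely column $u_2$, and is the topmost border-strip cell in that column, this immediately gives
\begin{equation*}
    \mathrm{pos}(u,P_j) \,=\, 1 + \sum_{b = u_2}p_{a,b},
\end{equation*}
the $+1$ accounting for $u$ itself. Adding the two contributions and using $n=|\lambda/\mu|+\sum_{b\geq 1}p_{a,b}$ yields the desired identity by elementary bookkeeping.

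The step I expect to be the main obstacle is the local position count. The general description ``reversed Schur on the border strip, natural on the trees'' does not by itself fix the relative order between a border-strip cell and a tree hanging on a different border-strip cell, so one has to argue carefully from the column-by-column sweep defining $\omega_{\inv}$ that the trees on cells in column $u_2$ (including possibly cells below $u$ in that column) are labeled strictly before $u$, while the trees on cells in the remaining columns of $P_j$ are labeled strictly after $u$. This asymmetric behavior is precisely what the difference $\sum_{b\geq u_2}p_{a,b}$ in the claimed identity encodes.
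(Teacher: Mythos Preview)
Your proposal is correct and arrives at the identity by essentially the same bookkeeping, but organized differently from the paper. The paper does not decompose $\omega_{\inv}(u)$ as (prefix of blocks) $+$ (local position in $P_j$); instead it first treats the bare border strip, showing that for the reversed Schur labeling $\omega$ on $Q_{\lambda/\mu}$ one has $n_0-\omega(x)=\lambda'_1-1+c(x)$ for every cell $x$ (checked at the endpoint $(\lambda'_1,1)$ and then step by step along the path, since content and label change by $\pm1$ in lockstep). It then asserts the shift $\omega_{\inv}(x)=\omega(x)+\sum_{b\geq x_2}p_{a,b}$ for border-strip cells $x$, and combines this with $n=n_0+\sum_{b\geq1}p_{a,b}$ to conclude. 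So the paper separates the contribution of the border strip from the contribution of the hanging trees, whereas you separate the blocks preceding $P_j$ from the block $P_j$ itself.

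The point you flag as the ``main obstacle'' --- that the description ``reversed Schur on the strip, natural on the trees'' does not by itself fix the relative order of a tree element and a border-strip cell in another column --- is exactly the point the paper handles by simply stating the shift formula $\omega_{\inv}(x)=\omega(x)+\sum_{b\geq x_2}p_{a,b}$ without further argument. That formula encodes precisely the column-by-column convention you describe (all trees hanging on column $c$ are labeled before the border-strip cells of column $c$), so you are not missing any step that the paper supplies. Your approach has the virtue of staying closer to the block-by-block definition of $\omega_{\inv}$; the paper's shift formulation is a bit shorter because it sidesteps identifying which block contains $u$.
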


\begin{proof}
First, we show that for a border strip $(P_{\lambda/\mu},\omega)$ of size $n_0$ with a reversed Schur labeling we have $n_0-\omega(x) = \lambda'_1 -1 + c(x)$ for all $x\in P_{\lambda/\mu}$. Note that in a border strip, there is only one element per content. Also, for Schur labeling of a border strip, we have $\omega(\lambda_1',1) = 1$ and $\omega(1,\lambda_1) = n_0$. The element $(\lambda_1',1)$ satisfies the equation. Then as you follow the border strip, the content decreases by one while the label increases by one, so the rest of the elements satisfy the equation $n_0-\omega(x) = \lambda'_1 -1 + c(x)$.

Now for a labeled mobile tree poset $(P_{\lambda/\mu}(\mathbf{p}),\omega_{\inv})$ of size $n$, for any $x\in P_{\lambda/\mu}(\mathbf{p})$, $\omega_{\inv}(x)$ gets shifted by $\sum_{b\geq x_2}p_{a,b}$. Then $\omega_{\inv}(x) = \omega(x) + \sum_{b\geq x_2}p_{a,b}$. Also, we have that $n = n_0 + \sum_{b\geq 1}p_{a,b}$. Then applying such shifts to the equation obtained from a border strip, we have the desired equation.
\end{proof}

Then we can simplify \eqref{eq: recurrence in H inv}, completing the proof of Lemma~\ref{lem: Chevalley q analog inv lem}.

\section{Final remarks}\label{sec: final remarks}

\subsection{Theorem~\ref{thm: mobile nhlf} for border-strips}

In \cite{MPP1} Morales, Pak, and Panova gave a proof of Theorem~\ref{thm: MPP1 maj} using factorial Schur functions. In \cite{MPP2} the same authors gave another proof of Theorem~\ref{thm: MPP1 maj} reducing it to the case of border strips. The latter proof included an analogue of Lemma~\ref{lemma: Chevalley qanalog maj} to border strips, but there was no explicit analogue of Lemma~\ref{lemma: major mobile recurrence}. Instead they relied on an identity \cite[Lemma 7.2]{MPP2} proved using factorial Schur functions. Our Lemma~\ref{lemma: major mobile recurrence} can be reduced to the case of border strips as follows. 

\begin{corollary}\label{cor: border strip recursion}
For a labeled border-strip poset $(Q_{\lambda/\mu},\omega)$, where $\omega$ is a reversed Schur labeling,
\[e_q^{\maj}(Q_{\lambda/\mu},\omega) = \sum_{\mu\to\nu}q^{|Q_{\lambda/\nu_1}|} e_q^{\maj}(Q_{\lambda/\nu},\omega_\nu),\]
where $Q_{\lambda/\nu_1}$ is the left disconnected poset of $Q_{\lambda/\nu}$, and $\omega_\nu$ is the restricted labeling of $\omega$ onto $Q_{\lambda/\nu}$.
\end{corollary}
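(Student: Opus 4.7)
The plan is to obtain Corollary~\ref{cor: border strip recursion} as the immediate specialization of Lemma~\ref{lemma: major mobile recurrence} to the case in which no $d$-complete posets are attached to the border strip. Concretely, I take $\mathbf{p}$ to be the tuple of empty posets, so that $P_{\lambda/\mu}(\mathbf{p})$ collapses to the pure border-strip poset $Q_{\lambda/\mu}$ and likewise $P_{\lambda/\nu} = Q_{\lambda/\nu}$, $P_{\lambda/\nu_1} = Q_{\lambda/\nu_1}$, with $|P_{\lambda/\nu_1}| = |Q_{\lambda/\nu_1}|$. The hypothesis of Lemma~\ref{lemma: major mobile recurrence} -- reversed Schur labeling on $[\lambda/\mu]$ together with natural labeling on the hanging $d$-complete posets -- reduces to the single requirement stated in the corollary, since the natural-labeling condition is vacuous when there are no hanging posets. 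Substituting these identifications into the displayed recurrence of the lemma gives the claimed identity term-by-term.

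Since the corollary is a direct specialization, there is no genuine obstacle to verify; the real work lives in the proof of Lemma~\ref{lemma: major mobile recurrence} itself. If one wished to prove the border-strip case independently -- as is natural in the spirit of \cite{MPP2}, whose approach this paper extends -- the main ingredient would be the fixed-point disjoint-sum identity for $e_q^{\maj}(Q_{\lambda/\nu_1}+Q_{\lambda/\nu_2};\{x\})$, which in the border-strip setting is already exactly Corollary~\ref{cor: e_q identity from G product} applied to the two border-strip components after restricting $\omega$. One would then mimic the $q$-Pascal split used in Proposition~\ref{prop: begins with nu1} and the reversed-Schur descent analysis of \eqref{eq: linear extension partition}, which only uses that the label of the bottom neighbor in $Q_{\lambda/\nu_1}$ is greater than $\omega(u)$ and the label of the left neighbor in $Q_{\lambda/\nu_2}$ is smaller -- both of which follow from the definition of reversed Schur labeling. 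In either reading, the content of the corollary is entirely subsumed by the machinery developed earlier in Section~\ref{sec: q-analogue major}.
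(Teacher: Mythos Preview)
Your proposal is correct and matches the paper's approach exactly: the corollary is stated in the final remarks precisely as the specialization of Lemma~\ref{lemma: major mobile recurrence} to the case $\mathbf{p}=\varnothing$, with no separate argument given. Your additional paragraph sketching an independent border-strip proof is accurate extra commentary, but the paper itself does not carry it out.
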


Then using the Pieri--Chevalley formula (\eqref{eq: Chevalley} proved in \cite{MPP2}) and Corollary~\ref{cor: border strip recursion}, we obtain a proof of Theorem~\ref{thm: MPP1 maj} for border strips without using factorial Schur functions.

\subsection{Bijective proof between maj and inv index for border strips}

The inversion statistic analogue of Lemma~\ref{lemma: major mobile recurrence} is Lemma~\ref{lem: inv recurrence}. In the case of border strips $Q_{\lambda/\mu}$, since $n-\omega(u) = |Q_{\lambda/\nu_1}|$, we obtain the $\emph{same}$ recurrence as $e_q^{\maj}(Q_{\lambda/\mu})$ as in Corollary~\ref{cor: border strip recursion}. Thus, we obtain the following equation of the $q$-analogues for border strips. 

\begin{corollary}\label{cor: inv = maj}
For a border strip $Q_{\lambda/\mu}$,
\[e_q^{\inv}(Q_{\lambda/\mu},\omega) = e_q^{\maj}(Q_{\lambda/\mu},\omega),\]
where $\omega$ is a reversed Schur labeling
\end{corollary}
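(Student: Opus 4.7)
The plan is to prove the equality by induction on $n = |\lambda/\mu|$, using the two recurrences already established in the paper. The base case $n = 1$ is trivial, since the unique linear extension of a single-element poset has no descents and no inversions, so both sides equal $1$. For the inductive step, Corollary~\ref{cor: border strip recursion} gives
\[
e_q^{\maj}(Q_{\lambda/\mu},\omega) \;=\; \sum_{\mu \to \nu} q^{|Q_{\lambda/\nu_1}|}\, e_q^{\maj}(Q_{\lambda/\nu},\omega_\nu),
\]
while Lemma~\ref{lem: inv recurrence}, specialized to the border-strip case (no hanging $d$-complete posets), gives
\[
e_q^{\inv}(Q_{\lambda/\mu},\omega) \;=\; \sum_{\mu \to \nu} q^{n-\omega(u)}\, e_q^{\inv}(Q_{\lambda/\nu},\omega_\nu),
\]
where $u$ is the inner corner removed by $\mu \to \nu$. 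It therefore suffices to verify the single combinatorial identity $n - \omega(u) = |Q_{\lambda/\nu_1}|$ for every such $u$.

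To establish this identity I would unpack the reversed Schur labeling on a border strip. The cells of $Q_{\lambda/\mu}$ form a single connected path from the top-right cell $(1,\lambda_1)$ to the bottom-left cell $(\lambda_1',1)$, and the reversed Schur labeling assigns the values $1, 2, \dots, n$ consecutively along this path, starting with $1$ at the top-right. Removing the inner corner $u$ splits the path into an upper-right arc of size $\omega(u)-1$ (labels $1, \dots, \omega(u)-1$) and a lower-left arc of size $n - \omega(u)$ (labels $\omega(u)+1, \dots, n$). By the convention already used in the proof of Lemma~\ref{lemma: major mobile recurrence}, where elements $x \in Q_{\lambda/\nu_1}$ are characterized by $\omega(x) > \omega(u)$, the lower-left arc is precisely $Q_{\lambda/\nu_1}$, yielding $|Q_{\lambda/\nu_1}| = n - \omega(u)$.

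Once this is in place the two recurrences coincide term by term, and applying the inductive hypothesis $e_q^{\inv}(Q_{\lambda/\nu},\omega_\nu) = e_q^{\maj}(Q_{\lambda/\nu},\omega_\nu)$ to each summand closes the induction. I do not foresee any genuine obstacle: the argument rests entirely on the two recurrences (already in hand) together with the above label-counting observation, whose proof is just a direct reading of the definition of a reversed Schur labeling on a single path. As a caveat to the subsection title, I would note that this argument is ``bijective'' only in the indirect sense that the common recurrence produces a recursively defined bijection between linear extensions of $Q_{\lambda/\mu}$ with prescribed $\inv$ and $\maj$ statistics; exhibiting an explicit, non-recursive bijection is a natural follow-up question but is not what this approach yields.
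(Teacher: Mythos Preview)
Your proposal is correct and matches the paper's argument exactly: the paper deduces the corollary from the single observation that $n-\omega(u)=|Q_{\lambda/\nu_1}|$ for border strips, so the recurrences in Corollary~\ref{cor: border strip recursion} and Lemma~\ref{lem: inv recurrence} coincide. Regarding your closing caveat, the paper goes on to note that Foata's bijection $\varphi$ supplies the explicit non-recursive bijection you ask about, since it preserves inverse descent sets and hence maps $\mathcal{L}(Q_{\lambda/\mu},\omega)$ to itself while sending $\maj$ to $\inv$.
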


This identity can also be proved bijectively using  Foata's classical bijection on permutations, denoted by $\varphi$, defined as follows (see \cite[Sec. 1.4]{Sta1}). Let $w = w_1\cdots w_n \in  \mathfrak{S}_n$, and we define $\gamma_1,\dots \gamma_n$, where $\gamma_k$ is a permutation of $\{w_1,\dots,w_k\}$. Let $\gamma_1 = w_1$. For each $k\geq 1$, if the last letter of $\gamma_k$ is greater (respectively smaller) than $w_{k+1}$, then split $\gamma_k$ after each letter greater (respectively smaller) than $w_{k+1}$. To obtain $\gamma_{k+1}$, cyclically shift each compartment of $\gamma_k$ to the right, then place $w_{k+1}$ at the end. We set $\varphi(w) = \gamma_n$. We have the following theorem.

\begin{theorem}[Foata \cite{F}]
Let $\varphi$ be the Foata bijection. For all $w \in \mathfrak{S}_n$, \[\des(w^{-1}) = \des(\varphi(w)^{-1}).\]
\end{theorem}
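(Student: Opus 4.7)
The plan is to reformulate the statement in terms of the relative order of consecutive integers, and then verify that this order is preserved at every stage of Foata's inductive construction. Observe first that $i\in\des(w^{-1})$ if and only if the letter $i+1$ precedes the letter $i$ in the one-line notation of $w$. Therefore, to prove $\des(w^{-1})=\des(\varphi(w)^{-1})$ it suffices to show that for every $i\in[n-1]$, the relative order of the letters $i$ and $i+1$ is the same in $w$ and in $\varphi(w)$.

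I would establish the following inductive claim by induction on $k$: for every pair of consecutive integers $a,a+1\in\{w_1,\ldots,w_k\}$, their relative order in the prefix $w_1\cdots w_k$ agrees with their relative order in $\gamma_k$. The base case $k=1$ is immediate since $\gamma_1=w_1$. For the inductive step, the compartments of $\gamma_k$ occupy the same blocks of positions inside $\gamma_{k+1}$ before $w_{k+1}$ is appended, so pairs of letters lying in different compartments automatically keep their relative order. Within a single compartment $[b_1,\ldots,b_r]$, the cyclic right-shift only reverses the order between the last letter $b_r$ and each earlier letter $b_i$ with $i<r$. In the case when the last letter of $\gamma_k$ exceeds $w_{k+1}$ (the other case being symmetric), the splitting rule forces $b_r>w_{k+1}$ while $b_1,\ldots,b_{r-1}\le w_{k+1}$; since $w_{k+1}\notin\gamma_k$ the latter inequality is strict, so $b_r-b_i\ge 2$ for every $i<r$. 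Hence no pair of consecutive integers inside any compartment gets reversed by the shift. Appending $w_{k+1}$ at the end of $\gamma_{k+1}$ mirrors its position at the end of $w_1\cdots w_{k+1}$, so its relative order with each earlier letter also matches, completing the inductive step.

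The hardest part will be packaging the case analysis cleanly: one must verify separately that, whether the last letter of $\gamma_k$ is greater than or smaller than $w_{k+1}$, the letters sitting at the ends of the compartments lie strictly on one side of $w_{k+1}$ while all other letters lie strictly on the other side. It is this strict separation, enforced by $w_{k+1}$ being a new letter, that produces the gap $|b_r-b_i|\ge 2$ and thereby rules out any reversal of consecutive integers. Once the inductive claim is established at $k=n$, the word $\gamma_n=\varphi(w)$ shares with $w$ the relative order of every pair of consecutive integers, which by the opening reformulation yields $\des(w^{-1})=\des(\varphi(w)^{-1})$.
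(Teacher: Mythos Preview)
Your argument is correct. The paper, however, does not prove this theorem: it is stated as a classical result of Foata (with a citation) and then invoked without proof to deduce the subsequent lemma about linear extensions under the Foata map. What you have written is essentially the standard proof: reformulate $i\in\des(w^{-1})$ as ``$i+1$ precedes $i$ in the word $w$'', and then check that in every compartment $[b_1,\ldots,b_r]$ the terminal letter $b_r$ and the remaining letters $b_1,\ldots,b_{r-1}$ lie strictly on opposite sides of the value $w_{k+1}$, so that $|b_r-b_i|\ge 2$ and the cyclic shift never swaps a pair of consecutive values. Since the paper offers no proof of its own, there is nothing to compare your approach against.
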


Because Foata's bijection preserves descent sets, we have the following bijection between the major and inversion index.

\begin{lemma}
Given a $\sigma \in \mathcal{L}(P, \omega)$, where $\omega$ is a (reversed) Schur labeling, $\varphi(\sigma)$ is also in $\mathcal{L}(P,\omega)$, and
$\maj(\sigma) = \inv(\varphi(\sigma))$
\end{lemma}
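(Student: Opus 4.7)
The identity $\maj(\sigma) = \inv(\varphi(\sigma))$ on $\mathfrak{S}_n$ is the defining property of Foata's bijection, so the nontrivial content of the lemma is showing that $\varphi(\mathcal{L}(P,\omega)) \subseteq \mathcal{L}(P,\omega)$. My plan is to rephrase membership in $\mathcal{L}(P,\omega)$ as a condition on $\des(\sigma^{-1})$, and then invoke the theorem $\des(w^{-1}) = \des(\varphi(w)^{-1})$ stated just above the lemma.

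The key observation is that for a border strip $\lambda/\mu$ equipped with a (reversed) Schur labeling $\omega$, consecutive cells along the strip receive labels differing by $1$. Hence every cover relation $x \lessdot_P y$ of the induced poset corresponds to a pair of values $\{i, i+1\}$ for some $i \in [n-1]$. The condition $\sigma^{-1}(\omega(x)) < \sigma^{-1}(\omega(y))$ for each such cover translates directly into the statement that $i$ either is, or is not, a descent of $\sigma^{-1}$, depending on whether the step is a row-step or a column-step along the border strip. Collecting these conditions over all cover relations gives a characterization
\[
\mathcal{L}(P,\omega) = \{\sigma \in \mathfrak{S}_n : \des(\sigma^{-1}) \text{ satisfies prescribed conditions on a fixed set } A \subseteq [n-1]\},
\]
which depends only on $\des(\sigma^{-1})$.

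With this characterization in place, Foata's theorem immediately implies that $\varphi$ preserves $\mathcal{L}(P,\omega)$, since $\des(\sigma^{-1}) = \des(\varphi(\sigma)^{-1})$. Combining this with $\maj(\sigma) = \inv(\varphi(\sigma))$ closes the argument. The main step to handle carefully is the bookkeeping in the previous paragraph: the two possible orientations of a step along the border strip (row vs.\ column) give opposite inequalities between the adjacent labels in $\sigma$, so one must verify that in \emph{both} cases the condition is still a predicate on $\des(\sigma^{-1})$. Once this is checked, the conclusion is immediate and gives a fully bijective proof of Corollary~\ref{cor: inv = maj}.
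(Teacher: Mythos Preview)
Your proposal is correct and matches the paper's approach: the paper does not give a detailed proof but simply remarks that the lemma follows because Foata's bijection preserves inverse descent sets. You have spelled out the key step the paper leaves implicit, namely that for a border strip with (reversed) Schur labeling every cover relation involves consecutive labels, so membership in $\mathcal{L}(P,\omega)$ is a predicate on $\des(\sigma^{-1})$ and is therefore preserved by $\varphi$.
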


More detailed information about the equidistribution of major and inversion index in trees can be found in \cite{BW}. The situation for mobile posets is more subtle since the $q$-analogues do not agree (see Example~\ref{example: major} and Example~\ref{example: inversion}).

\subsection{Mobiles of general skew shapes}

The formula \eqref{eq:NHLF} holds true for all posets coming from skew shapes, but the combinatorial proof of the formula is restricted to the case of border strips. Recall that a mobile is obtained by hanging $d$-complete posets from a border-strip. It would be interesting to see if Theorem~\ref{thm: major mobile nhlf} holds for posets where the border strip is replaced by general skew shape. Calculations suggest that the Naruse formula \eqref{thm: major mobile nhlf} would need some adjustments.

On the other hand, we use a version of Pieri--Chevalley formula and the recurrence for our proof. There is a version of the Pieri--Chevalley formula  for general skew shapes, shown algebraically by Ikeda and Naruse \cite{IkedaNaruse}, and combinatorically by Konvalinka \cite{Konvalinka}.

\subsection{Relation with Naruse-Okada hook-length formula}\label{subsec: skew d-complete}

Naruse-Okada \cite{NaruseOkada} have a different $q$-analogue of $e_q^{\maj}(P,\omega)$ for a family called \emph{skew d-complete} posets, which intersects with the family of mobile posets \cite[Section 6.1]{GMM}. 

\begin{definition}\cite{NaruseOkada}
A \emph{skew $d$-complete poset} is a $d$-complete poset $P$ with an order filter $I$ removed. We denote such a poset by $P\setminus I$.
\end{definition}

The Naruse-Okada formula for counting linear extensions of skew $d$-complete posets uses the hook-length of \emph{excited peaks} (see \cite[Section 6]{MPP1} and \cite{NaruseOkada})instead of broken diagonals.

\begin{theorem}[{Naruse-Okada \cite{naruse2014}}]\label{thm: NaruseOkada skew d-complete}
Let $P\setminus I$ be a skew $d$-complete poset with $n$ elements. Then
\[e_q^{\maj}(P\setminus I) = \prod_{i=1}^n(1-q^i) \sum_{D\in \mathcal{E}(P\setminus I)}\frac{\prod_{v\in B(D)}q^{h(v)}}{ \prod_{v\in P\setminus D}(1-q^{h(v)})},\]
where $h(v)$ is the hook length of element $v$ in $P\setminus I$ and $B(D)$ is a set of excited peaks of $D$.
\end{theorem}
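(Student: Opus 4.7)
The plan is to mirror the combinatorial strategy used throughout this paper: establish a recurrence for $e_q^{\maj}(P\setminus I)$ by peeling off an element, establish a matching Pieri--Chevalley-type recurrence for the right-hand side, and then induct on $n = |P \setminus I|$. The two base cases are $n = 0$ (vacuous) and $I = \emptyset$, where the statement reduces to the Peterson--Proctor formula (Theorem~\ref{thm: d-complete major index}) upon noting that $\mathcal{E}(P) = \{\emptyset\}$ and $B(\emptyset) = \emptyset$.

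First I would prove the linear-extension recurrence. Recall that an order filter $I$ of $P$ is upward closed, so the elements one can add to $I$ while staying inside $P$ are precisely the maximal elements of $P \setminus I$. For each such $u$, writing a linear extension of $(P \setminus I, \omega)$ as a linear extension of $P \setminus (I \cup \{u\})$ followed by $\omega(u)$ and tracking the effect on $\maj$ as in Lemma~\ref{lemma: major mobile recurrence}, one obtains
\[
  e_q^{\maj}(P\setminus I) \;=\; \sum_{u} q^{\alpha(u)} \, e_q^{\maj}\bigl(P\setminus(I\cup\{u\})\bigr),
\]
where the exponent $\alpha(u)$ encodes whether appending $\omega(u)$ creates a new descent and, if so, at which position. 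For the natural labeling used by Naruse--Okada this shift is easy to pin down, and the recurrence plays the role of Lemma~\ref{lemma: major mobile recurrence} in the $d$-complete setting.

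Second I would prove a matching recurrence for the right-hand side
\[
  H_{P\setminus I}(q) \;:=\; \sum_{D \in \mathcal{E}(P\setminus I)} \frac{\prod_{v \in B(D)} q^{h(v)}}{\prod_{v\in P\setminus D}(1-q^{h(v)})}.
\]
This is a Pieri--Chevalley identity in the $d$-complete setting, and it is a specialization (at the characters $x_v \mapsto q^{h(v)}$) of the equivariant $K$-theoretic Chevalley rule for Kac--Moody Schubert varieties due to Ikeda--Naruse; that rule is precisely the geometric source of the excited-peak combinatorics in \cite{NaruseOkada}. Alternatively, one can proceed purely combinatorially by partitioning $\mathcal{E}(P\setminus I)$ according to where the new filter element $u$ is ``placed'' in each excited diagram and matching it against $\mathcal{E}(P \setminus (I \cup \{u\}))$ via a local move, just as $\mu \to \nu$ does in the proof of Lemma~\ref{lemma: Chevalley qanalog maj}.

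The main obstacle is the combinatorics of excited peaks. Unlike for skew shapes, where broken diagonals transform very cleanly under excited moves and telescope into a single hook-length contribution (cf.\ the proof of Lemma~\ref{lemma: Chevalley qanalog maj}), for general $d$-complete posets the excited moves are governed by local double-tailed-diamond substructures and the set $B(D)$ of peaks shifts according to the more delicate rules underlying Nakada's colored hook formula. Verifying the telescoping that matches $H_{P\setminus I}(q)$ against the $H_{P \setminus (I \cup \{u\})}(q)$ requires a careful case analysis at each type of excited move, together with an identity expressing $\sum_{u}q^{\alpha(u)}\,(\text{hook telescoping factor at }u)$ as $(1-q^n)$. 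The cleaner conceptual route, and the one Naruse--Okada actually take, is to invoke the equivariant localization formula in $T$-equivariant $K$-theory of the partial flag variety $G/P_J$ and specialize torus characters to $q^{h(v)}$, bypassing the excited-peak bookkeeping at the cost of importing geometric machinery.
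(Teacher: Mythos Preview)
The paper does not prove this theorem at all: it is stated in Section~\ref{subsec: skew d-complete} as a result of Naruse--Okada and is only used for comparison with the paper's own mobile formula. In fact, the paper explicitly lists a combinatorial proof of Theorem~\ref{thm: NaruseOkada skew d-complete} via the techniques of \cite{MPP2} as an open problem (``It would be interesting to see if one can give a proof of Theorem~\ref{thm: NaruseOkada skew d-complete} using the technique from \cite{MPP2}. This would involve proving a variation of Lemma~\ref{lemma: major mobile recurrence} where $\omega$ is a natural labeling instead of reversed Schur labeling.'').

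Your proposal is therefore not a reconstruction of the paper's proof but an outline of an attack on this open problem. You have correctly identified the architecture such a proof would need (a recurrence on $e_q^{\maj}$ plus a matching Pieri--Chevalley identity for the excited-peak sum) and the main obstacle (the behavior of $B(D)$ under $d$-complete excited moves does not telescope as cleanly as broken diagonals do for border strips). You are also right that Naruse--Okada's own argument goes through equivariant $K$-theory rather than through such a recurrence. But as written, your proposal is a sketch with the hard step---the combinatorial Pieri--Chevalley identity for general $d$-complete posets, or equivalently the verification that the peak weights telescope to produce the factor $(1-q^n)$---left unproved, and the paper offers no help here since it regards this as future work.
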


For posets that are both mobiles and skew $d$-complete, the notion of hook-lengths are the same (see Figure~\ref{fig:skew d-complete labeling}). Then for such posets Theorem~\ref{thm: NaruseOkada skew d-complete} at $q=1$ and Corollary~\ref{thm: mobile nhlf} agree.

However, the $q$-analogues in Theorem~\ref{thm: mobile nhlf} and Theorem~\ref{thm: NaruseOkada skew d-complete} are different (see Example~\ref{ex: skew d-complete}). This is because the NHLF formula for skew $d$-complete posets uses the natural labeling of the poset as opposed to the reversed Schur labeling. For the case of skew shapes, their $q$-analogue agrees with the reverse plane partition $q$-analogue of the Naruse formula in (see \cite{MPP1} Corollary 6.17) instead of SSYT $q$-analogue, which uses the Schur labeling (see Figure~\ref{fig:skew d-complete labeling}).

\begin{figure}[h]
\centering
\begin{subfigure}[normal]{0.3\textwidth}
\centering
\includegraphics[height=1.5cm]{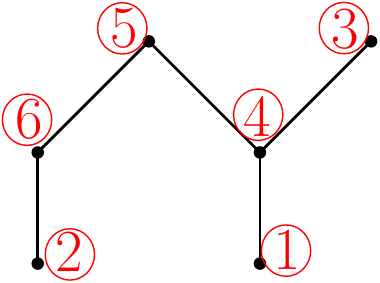}
\caption{}
\end{subfigure}
\quad
\begin{subfigure}[normal]{0.3\textwidth}
\centering
\includegraphics[height=1.5cm]{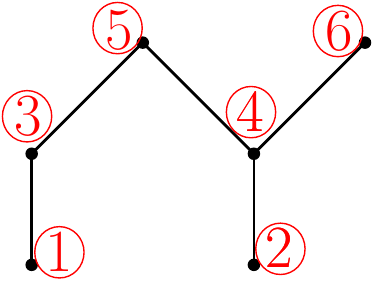}
\caption{}
\end{subfigure}
\begin{subfigure}[normal]{0.3\textwidth}
\centering
\includegraphics[height=1.5cm]{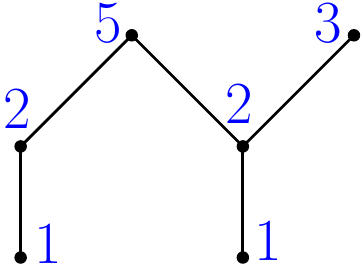}
\caption{}
\end{subfigure}

    \caption{A skew $d$-complete poset $P\setminus I$ that is also a mobile labeled using (a) the Schur labeling $\omega'$ and (b) the natural labeling $\omega$. (c) The hook lengths of the elements in the poset.}
    \label{fig:skew d-complete labeling}
\end{figure}

\begin{example}\label{ex: skew d-complete}
Consider the poset $Q=P\setminus I$ in Figure~\ref{fig:skew d-complete labeling} that is both a mobile poset and a skew $d$-complete poset \cite[Ex. 6.3]{GMM}. If we label it using the reversed Schur labeling on the border strip and natural labeling on the $d$-complete posets, then by Theorem~\ref{thm: major mobile nhlf}, we have
\begin{align*}
    e_q^{\maj}(Q,\omega') &= q^{11} + 2q^{10} + 3q^9 + 3q^8 + 3q^7 + 2q^6 + 1q^5 + q^4\\
    &=[6]!\left(\frac{q^4}{[1][1][2][2][3][5]} + \frac{q^7}{[1][1][2][3][5][6]}\right).
\end{align*}

Now, label the same skew $d$-complete poset $Q=P\setminus I$ using the natural labeling. Then, by Naruse--Okada formula (Theorem~\ref{thm: NaruseOkada skew d-complete}), we have
\begin{align*}
    e_q^{\maj}(Q,\omega) &= q^9 + q^8 + 2q^7 + 2q^6 + 2q^5 + 2q^4 + 2q^3 + 2q^3 + 2q^2 + q + 1\\
    &=[6]!\left(\frac{q^0}{[1][1][2][2][3][5]} + \frac{q^6}{[1][1][2][3][5][6]}\right).
\end{align*}

\end{example}

It would be interesting to see if one can give a proof of Theorem~\ref{thm: NaruseOkada skew d-complete} using the technique from \cite{MPP2}. This would involve proving a variation of Lemma~\ref{lemma: major mobile recurrence} where $\omega$ is a natural labeling instead of reversed Schur labeling.

\section*{Acknowledgement}
I would like to thank Alejandro Morales for introducing this problem and for all of his guidance throughout this project. I would also like to thank Stefan Grosser, Jacob Matherne, and Soichi Okada for helpful comments.

\bibliographystyle{plain}
\bibliography{Naruse}

\end{document}